\title{Operadic actions on long knots and $2$--string links}
\author{Etienne Batelier}
\author{Julien Ducoulombier}
\newtheorem{Thm}{Theorem}[section]
\newtheorem{Lem}[Thm]{Lemma}
\newtheorem{Cor}[Thm]{Corollary}
\newtheorem{Prop}[Thm]{Proposition}
\newtheorem{Claim}{Claim}
\newtheorem*{Thm*}{Theorem}
\theoremstyle{definition}
\newtheorem{Rem}[Thm]{Remark}
\newtheorem{Ex}[Thm]{Example}
\newtheorem{Defn}[Thm]{Definition}
\newcommand{\cubes}[1]{\mathcal{C}_{#1}}
\newcommand{\cubeso}[1]{\mathcal{C}_{#1}^{\circ}}
\newcommand{\overcubes}[1]{\mathcal{C}_{#1}^{\infty}}
\newcommand{\Com}{\mathcal{C}om}
\newcommand{\As}{\mathcal{A}s}
\newcommand{\SCL}{\mathcal{SCL}}
\newcommand{\Top}{\mathbf{Top}}
\newcommand{\Alg}[1]{{#1}\text{--}\mathbf{Alg}}
\DeclareMathOperator{\Emb}{Emb}
\DeclareMathOperator{\Diff}{Diff}
\DeclareMathOperator{\Diffd}{Diff_d}
\DeclareMathOperator{\Homeo}{Homeo}
\DeclareMathOperator{\KDiff}{KDiff}
\DeclareMathOperator{\inter}{int}
\DeclareMathOperator{\id}{id}
\DeclareMathOperator{\conf}{conf}
\DeclareMathOperator{\supp}{supp}
\newcommand{\OO}{\mathcal{O}}
\newcommand{\EE}{\mathcal{E}}
\newcommand{\KK}{\mathcal{K}}
\newcommand{\KKhat}{\hat{\KK}}
\newcommand{\PP}{\mathcal{P}}
\newcommand{\PPhat}{\hat{\PP}}
\newcommand{\LL}{\mathcal{L}}
\newcommand{\LLhat}{\hat{\LL}}
\newcommand{\QQ}{\mathcal{Q}}
\newcommand{\QQhat}{\hat{\QQ}}
\newcommand{\SShat}{\hat{\mathcal{S}}}
\newcommand{\bb}[1]{\mathbb{#1}}
\newcommand{\del}{\partial}
\newcommand{\cof}{\hookrightarrow}
\newcommand{\up}{\mathsmaller\uparrow}
\newcommand{\down}{\mathsmaller\downarrow}
\newcommand{\updown}{\mathsmaller\updownarrow}
\newcommand{\varphihat}{\hat{\varphi}}
\newcommand{\downmapsto}{\rotatebox[origin=c]{-90}{$\mapsto$}}
\newcommand{\longdownmapsto}{\rotatebox[origin=c]{-90}{$\longmapsto$}}
\renewcommand{\u}[1]{\underline{#1}}
\newcommand{\ult}{\underline{t}}
\newcommand{\ulu}{\underline{u}}
\newcommand{\ulx}{\underline{x}}
\newcommand{\co}{\colon}
\newcommand{\underscore}{\_}
\newcommand{\qr}{\text}
\newcommand{\mathbold}{\bm}
\newcommand{\lk}{\mathrm{lk}}
\newcommand{\hoTot}{\mathrm{hoTot}}
\newcommand{\CAut}{\mathrm{CAut}}
\newcommand{\ext}{\mathrm{ext}}
\newcommand{\SO}{\mathrm{SO}}
\newcommand{\hash}{\#}
\def\faktor#1#2{#1 \big/ #2}
\begin{document}

\begin{abstract}
We realize the space of $2$--string links $\mathcal{L}$
as a free algebra over a colored operad denoted
by $\mathcal{SCL}$ (for ``Swiss-cheese for links'').
This result extends works of Burke and Koytcheff about the quotient of
$\mathcal{L}$ by its center, and is compatible with
Budney's freeness theorem for long knots. From an algebraic point of view,
our main result refines
Blaire, Burke and Koytcheff's theorem on the monoid of isotopy classes of
string links. Topologically,
it expresses the homotopy type of the isotopy class of a $2$--string link
in terms of the homotopy types
of the classes of its prime factors.
\end{abstract}

\maketitle

\tableofcontents
\addtocontents{toc}{\protect\setcounter{tocdepth}{1}} 

\section*{Introduction}

\subsection*{Motivation and context}

The study of knots and links is a vast subject that emerged in the late
nineteenth century and saw several renewals in the past thirty years. It
is subject to many different approaches,
being at the crossroads of topology, geometry, algebra, combinatorics
and physics.
The central theme in classical knot theory is the study of the isotopy
classes of knots,
ie the isotopy classes of embeddings in $\Emb(S^1, S^3)$.
They are the set of components $\pi_0\Emb(S^1, S^3)$.
A common method is to try to chop the knots into simpler pieces. 
Two ways of performing such a decomposition have proved themselves
particularly fruitful:
the prime decomposition and the satellite decomposition.
The former splits a knot as the connected sum of other knots called its
prime factors. The connected sum is a binary operation on $\pi_0\Emb(S^1,
S^3)$ denoted 
by $\hash$.
It endows the isotopy classes with a unital commutative monoidal structure.
Intuitively, the product $k_1 \hash k_2$ is the knot obtained by cutting
open $k_1$ and $k_2$
and closing them up into a single knot. This decomposition is fairly well
understood thanks to a theorem of 
Schubert~\cite{Schubert} stating that $\pi_0\Emb(S^1, S^3)$ is freely generated
as a monoid
by the isotopy classes of the prime knots.
There are infinitely many prime knots and they can be very different
in nature.
This is why it is often useful to further decompose the prime knots as
satellites of simpler knots.
The satellite construction also originates in Schubert's work.
It consists of a wide family of operations one can carry out on several
knots at a time.
Its rigorous definition is a bit more involved but clearly dispensed for
example in Cromwell's book~\cite{Cromwell}.
Jaco, Shalen, Johannson and Thurston played a major role in the study of
satellite knots,
with the results in \cite{JS,J,Thurston}. 
Although they are quite complicated, the satellite operations have the
advantage
of generating the whole space of knots from a fairly small and classifiable
class of knots.
Namely, Budney shows in~\cite{BudneyJSJ} --- refining a result of
\cite{Thurston} --- that
every knot can be obtained via successive satellite operations on hyperbolic
and torus knots. 

This depicts a duality between the two methods of decomposition:
one is simple but leads to potentially complicated primes,
while the other is more elaborate but has easier irreducible pieces.
A similar story can be told for links, ie for the isotopy classes of
embeddings in
$\Emb(S^1 \amalg \cdots \amalg S^1, S^3)$.
Adapting the connected sum $\hash$ to this setting takes some work:
problems arise once the components of a link are cut open, as there is no
canonical way to close them up.
However, a step-by-step adaptation of the satellite construction works
for links, and a decomposition
theorem exists as well. Its building blocks are the hyperbolic and
Seifert-fibered links. 

Nowadays, it is more common to study not only the set of components
$$\pi_0 \Emb(S^1 \amalg \cdots \amalg S^1, S^3), $$
but the full homotopy
type of the spaces of knots and links.
To adapt the decomposition approach described above, one needs to define
an analogue of the connected sum and
satellite operations on the space level, ie directly on the embeddings
and not between isotopy classes.
To rigorously carry out this task, one uses the space of long knots $\KK$
and the space of string links $\LL$.
Coupled with the language of operads, these spaces enable one to extend
the existing operations on
$\pi_0$ to the space level. More precisely, the different types of operations
one can carry out on knots and links
can be encoded in the action of an operad on a space of embeddings. This
new framework is due to
Budney in~\cite{Budney} and~\cite{BudneySplicing} for the case of long knots.
Namely, Budney constructs in~\cite{Budney} an action of the little
$2$--cubes operad
$\cubes{2}$ on a space $\KKhat$ homotopy equivalent to $\KK$, in such a
way that
all the induced operations descend to the connected sum on $\pi_0$.
He builds in~\cite{BudneySplicing} another action on $\KKhat$ of a more
intricate operad
which he calls the splicing operad. These operations correspond to the
satellite constructions in many ways.
In the case of string links, Burke and Koytcheff build in~\cite{KoytcheffInfect} a complex operad called the infection operad.
It is an adaptation of Budney's splicing operad and deals with the satellite
operations between string links.
The authors mentioned above not only prove the existence of such actions
but also their freeness,
refining the unique decomposition results on $\pi_0$.
It remains to find an operadic encoding of the connected sum of links,
if possible, leading to a free algebra. 

\subsection*{Present work}

The present paper aims to fill in this gap in the case of $2$--stranded
string links.
Unlike $\pi_0 \KK$, the monoid of isotopy classes of $2$--string links is
neither free nor
commutative. Indeed, as explained 
by Blair, Burke and Koytcheff in~\cite{KoytcheffPrime}, $\pi_0 \LL$
contains invertible elements in the form
of the pure braid group $KB_2$.
Together with three copies of $\pi_0\KK$, these invertible elements generate
the center of the monoid.
Burke and Koytcheff state a partial result in \cite{KoytcheffInfect}
by building a free action of the little $1$--cubes operad $\cubes{1}$
on a subspace of $2$--string links
that ignores the homotopy center. They mention as an open problem the
extension of such a
structure to the whole space of $2$--string links. Our main result provides
an answer to this particular question.
For this purpose, we introduce a four-colored operad $\SCL$ (standing for
``Swiss-cheese for links'') with set of colors
$S=\{o,\up,\down,\updown\}$. An $\SCL$--algebra is in particular a family
of spaces
$(X, A_{\up}, A_{\down}, A_{\updown})$, where $X$ is a $\cubes{1}$--algebra
and each
$A_{s}$, $s\in \{\up,\down,\updown\}$, is a $\cubes{2}$--algebra acting on $X$.
One can think of the $A_{s}$ as independent parts of the homotopy center
of $X$.
As in the case of Budney's $\cubes{2}$--action on long knots, we consider
a space $\LLhat$
homotopy equivalent to $\LL$ to prove a first result which can be summarized
as follows:

\begin{Thm*}[Theorem \ref{SCLActs}]
The family $(\LLhat,\KKhat,\KKhat,\KKhat)$ is 
	an $\SCL$--algebra.
In particular, the family $(\LL,\KK,\KK,\KK)$ is homotopy equivalent to
an explicit $\SCL$--algebra.
\end{Thm*}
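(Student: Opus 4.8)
The plan is to produce the $\SCL$-algebra structure as a morphism of topological $S$-colored operads
\[
\mu\colon \SCL \longrightarrow \End_{(\LLhat,\,\KKhat,\,\KKhat,\,\KKhat)},
\]
and to read off the ``in particular'' clause from the homotopy equivalences $\LLhat\simeq\LL$ and $\KKhat\simeq\KK$ that are already available. Unwinding the definition of $\SCL$ — it contains a copy of $\cubes{1}$ on the color $o$, three copies of $\cubes{2}$ on the colors $\up$, $\down$, $\updown$, and ``mixed'', Swiss-Cheese-type operations with output color $o$ and closed inputs colored by $\{\up,\down,\updown\}$ — producing $\mu$ amounts to giving: a $\cubes{1}$-action on $\LLhat$; a $\cubes{2}$-action on each of the three copies of $\KKhat$; and, for every little $2$-cube configuration together with a coloring of its cubes, an insertion map that takes a $2$-string link in $\LLhat$ and a long knot in $\KKhat$ for each little cube, of the matching color, and returns a new $2$-string link; after which one checks equivariance, unitality and associativity in the form dictated by the composition of $\SCL$.

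For the $\cubes{2}$-action on each $A_s = \KKhat$, $s \in \{\up,\down,\updown\}$, we invoke Budney's theorem from \cite{Budney}: the three copies of $\KKhat$ carry their own $\cubes{2}$-structure and that is the structure we use. The $\cubes{1}$-action on $\LLhat$ is the vertical stacking of $2$-string links: given $c \in \cubes{1}(n)$ and $\ell_1,\dots,\ell_n \in \LLhat$, one rescales each $\ell_i$ into the slab cut out by the $i$-th little interval and concatenates in the long direction, with the trivial $2$-link as the unit; strict associativity and unitality come for free because elements of $\LLhat$ are, by design, standard near the two ends of the long direction. The mixed operations are splicing maps in the spirit of \cite{Budney, BudneySplicing}: a little $2$-cube $c_i$ carrying a long knot $j_i$ of color $s$ has $j_i$ shrunk into a thin box modelled on $c_i$ and then spliced into the first strand if $s = \up$, the second strand if $s = \down$, or into a tubular neighborhood enclosing both strands as a $2$-cable (with a fixed parallel framing) if $s = \updown$. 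The two directions of the little cubes are used, respectively, to iterate splicings of the same color along a strand — building up connected sums — and to slide splicings supported in disjoint regions past one another. Here the auxiliary data carried by the elements of $\LLhat$, namely disjoint product neighborhoods of the first strand, of the second strand, and of the $2$-cable of both strands, is precisely what makes all of these splicings strictly well defined and mutually disjoint.

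It then remains to verify the operad relations. Equivariance and unitality are immediate. Associativity decomposes into: the strict associativity of stacking; Budney's associativity for each of the three $\cubes{2}$-actions; the Swiss-Cheese compatibility between stacking and splicing, i.e. that splicing into a stack of $2$-links is the stack of the individually spliced links; and the mutual independence of the three families of splicings together with the compatibility of same-colored splicings along a strand with the $\cubes{2}$-composition on $\KKhat$. Translating these geometric identities into the composition law of $\SCL$ is the bulk of the argument. Once $\mu$ is shown to be a well-defined continuous morphism of colored operads, the homotopy equivalences $\LLhat\simeq\LL$ and $\KKhat\simeq\KK$ furnish the final assertion.

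I expect the main difficulty to be twofold. First, one must set up the model $\LLhat$ — and recall Budney's $\KKhat$ — carrying enough auxiliary structure that the $\cubes{1}$-stacking and all three $\cubes{2}$-splicings are \emph{simultaneously strictly} associative and unital; this is the fattening technique of \cite{Budney}, but the two strands, and in particular the $\updown$-splicing, force the supporting neighborhood of the $2$-cable to be chosen coherently with those of the first and second strands. Second, there is genuine bookkeeping in checking that the splicing construction factors through the composition of $\SCL$ — concretely, that an $\updown$-insertion, which touches both strands, can always be arranged disjointly from $\up$- and $\down$-insertions, in every configuration of little cubes. By contrast, deducing the statement about $(\LL,\KK,\KK,\KK)$ from the one about $(\LLhat,\KKhat,\KKhat,\KKhat)$ is routine.
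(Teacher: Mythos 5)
Your overall strategy is the one the paper follows: build $\mu\colon \SCL \to \EE_{(\LLhat,\KKhat,\KKhat,\KKhat)}$ colorwise out of Budney's $\cubes{2}$-action, a $\cubes{1}$-stacking on links, and three insertion maps $\hat\varphi^{\up},\hat\varphi^{\down},\hat\varphi^{\updown}$, then verify the operad axioms. But one step of your plan, as written, would fail. You put \emph{the same} Budney $\cubes{2}$-structure on all three copies of $\KKhat$. The paper cannot do this, because the three insertions are not symmetric: in the formula $k_\updown \circ l \circ [k_\up \amalg k_\down]$ the colors $\up,\down$ act by \emph{pre}-composition (they re-embed the two source tubes of the link), while $\updown$ acts by \emph{post}-composition (a $2$-cabling re-embeds the target tube containing both strands, and cannot be made local to a small box since the link is not standard there). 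In Budney's action the vertical coordinate of a little $2$-cube serves only to dictate the order in which the knots are composed. Now expand $\mu(L\circ_i P)$ at an $o$-colored slot, which necessarily sits on the lower face of $J^2$: the $\up$-colored knots contributed by $P$ must be composed on one side of those contributed by $L$, whereas the $\updown$-colored knots contributed by $P$ must be composed on the \emph{other} side of those contributed by $L$, precisely because the former block is pre-composed and the latter post-composed with the link. Since in both cases the new cubes arrive at height $-1$, this forces opposite ordering conventions: the paper keeps the regular action on $\up$ and $\down$ and equips the $\updown$ copy of $\KKhat$ with Budney's \emph{reverse} action (composition order given by decreasing rather than increasing height). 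With the regular action on $\updown$, the identity $\mu(L\circ_i P)=\mu(L)\circ_i\mu(P)$ breaks whenever $t_i=o$ and $P$ has $\updown$-colored inputs. This is exactly the ``Swiss-Cheese compatibility'' you defer to bookkeeping; it is not a formality, since it changes the structure maps themselves.

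Two smaller omissions. First, ``disjoint product neighborhoods'' of the strands amount to framings, and the space of links with chosen framings is not homotopy equivalent to $\LL$; one must restrict to framing number zero and then check that every operation of $\SCL$ preserves this condition (the paper does so by exhibiting the framing number as a morphism of $\SCL$-algebras into $(\bb{Z}^{\times 2},\bb{Z},\bb{Z},\bb{Z})$). Second, the $o$-colored inputs of $\SCL$ are genuine $2$-cubes, so the $\cubes{1}$-stacking must be fed their horizontal projections; this is harmless, but it is another symptom of the fact that only the horizontal extent and the vertical \emph{order} of a cube enter the construction, not a ``thin box modelled on $c_i$.''
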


The structure so obtained is compatible with Budney's action on
long knots and
Burke and Koytcheff's action on their subspace of noncentral string links.
It provides a complete understanding of the connected sum of $2$--string
links.
As expected, we also prove a freeness result, refining the structure
theorem for the
monoid of isotopy classes proved in \cite{KoytcheffPrime}. In order to do
this, we
discard the invertible elements by splitting $\pi_0\LLhat$ as a product
$\pi_0 \LLhat^{0} \times KB_2$ and prove:

\begin{Thm*}[Theorem \ref{freeSCL}]
The quadruplet of spaces $(\LLhat^0,\KKhat,\KKhat,\KKhat)$ is homotopy
equivalent as an $\SCL$--algebra
to the free $\SCL$--algebra generated by prime knots and links.
\end{Thm*}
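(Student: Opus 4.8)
The plan is to bootstrap from Theorem \ref{SCLActs} and the unique factorization theorem for $\pi_0\LL$, upgrading the latter from a statement about monoids to a statement about $\SCL$-algebras up to homotopy. First I would set up the comparison map: the free $\SCL$-algebra $F$ generated by the set of prime knots (in each of the three $\cubes{2}$-colors $\up,\down,\updown$) and prime links (in the $o$-color) comes equipped with a canonical map to $(\LLhat^0,\KKhat,\KKhat,\KKhat)$, obtained by sending each generator to (a chosen basepoint representative of) its isotopy class and extending by freeness, using the $\SCL$-action of Theorem \ref{SCLActs}. The goal is to show this map is a weak equivalence in each of the four colors. In the three knot colors this is exactly Budney's freeness theorem for long knots (the $\cubes{2}$-action on $\KKhat$ realizes it as the free $\cubes{2}$-algebra on prime knots), which I would cite and reuse verbatim, so the real content is the $o$-color statement about $\LLhat^0$.

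Next I would analyze the free algebra $F$ in the $o$-color explicitly. Since the only $\SCL$-operations landing in the $o$-color are the $\cubes{1}$-multiplication of $o$-inputs and the $\cubes{2}$-actions of $\up,\down,\updown$-inputs on an $o$-input, the $o$-component of $F$ decomposes as a disjoint union, indexed by the underlying ``combinatorial type'' (a word recording the pattern of prime-link factors interleaved with the three species of prime-knot insertions), of products of configuration-type spaces $\mathcal{C}_1(n)$ and $\mathcal{C}_2(n)$ crossed with products of the spaces of the generating primes. On the topological side I would establish the corresponding decomposition of $\LLhat^0$: every string $2$-link has a well-defined connected-sum factorization into prime links and into knots tied on each of its three local strands (the two individual strands and the ``doubled'' strand corresponding to $\updown$), and this factorization is realized at the space level by a deformation retraction of $\LLhat^0$ onto a disjoint union of subspaces, each fibering over the relevant configuration spaces with fibers the product of the prime-factor spaces. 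This is the link-theoretic heart: it is a careful isotopy/scanning argument, analogous to Budney's splitting of $\KKhat$, pushing the prime summands apart along the long direction and recording their positions as a point of a little-cubes configuration space.

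Then I would check that the comparison map respects these two decompositions component-by-component and induces, on each piece, a map of the form $(\text{config space}) \times \prod(\text{prime spaces}) \to (\text{config space}) \times \prod(\text{prime spaces})$ which is the identity on the configuration-space factor and, on each prime factor, the identity (for link primes) or Budney's equivalence (for knot primes, via $\KKhat$). Hence it is a weak equivalence on each component, so a weak equivalence overall; combined with the three knot colors this gives the asserted equivalence of $\SCL$-algebras. Finally I would verify the bookkeeping claim that $\pi_0\LLhat \cong \pi_0\LLhat^0 \times KB_2$ is compatible with the $\SCL$-structure — i.e.\ that splitting off the pure braid group $KB_2$ does not interfere with the algebra structure on the complement — using \cite{KoytcheffPrime}'s description of the center together with the observation that the $\cubes{2}$-actions already account for the $\pi_0\KK$-part of the center while $KB_2$ sits as a free direct factor untouched by connected sum.

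The main obstacle I expect is the space-level factorization of $\LLhat^0$ into configuration spaces times prime-factor spaces: on $\pi_0$ this is the theorem of \cite{KoytcheffPrime}, but promoting it to a homotopy equivalence requires simultaneously (i) making the prime decomposition of a link \emph{continuous} in the link — i.e.\ choosing splitting spheres that vary nicely — and (ii) controlling the three interleaved knot-factorizations on the individual and doubled strands so that the three $\cubes{2}$-actions are genuinely independent up to homotopy, which is the subtlety that forced the four-color structure $\{o,\up,\down,\updown\}$ in the first place. Everything else — freeness of the $\cubes{1}$ and $\cubes{2}$ pieces, the explicit form of $F$, the $KB_2$ bookkeeping — is either routine operad algebra or a direct appeal to \cite{Budney} and \cite{KoytcheffPrime}.
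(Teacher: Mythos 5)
Your overall architecture is right --- reduce to a bijection on $\pi_0$ via the structure theorems, quote Budney for the three knot colors and Burke--Koytcheff for the non-central links, compute the component of the free algebra explicitly as configuration spaces crossed with prime-factor spaces, and match the two decompositions component by component. But the central step, realizing the prime decomposition of $\LLhat^0$ at the space level, is where your proposal has a genuine gap. You propose a ``deformation retraction / scanning argument, analogous to Budney's splitting of $\KKhat$, pushing the prime summands apart along the long direction.'' That is not how Budney's proof works, and the obstacle you yourself name --- choosing splitting surfaces continuously in the link --- is exactly why no such direct retraction is available. The mechanism that actually closes this gap is the classifying-space one: each component $\LLhat_f$ is a $K(G,1)$ and a model for $B\Diffd(C_f,\del C_f)$ (Proposition \ref{BDiff}); one then splits the \emph{diffeomorphism group of the fixed complement} by cutting along canonical essential surfaces, using Hatcher's theorem that the relevant components of $\Emb(S,M,\del S)$ are weakly contractible (Theorem \ref{htpyTypeInc}) together with the fibration $\Diff(M,\del M)\to\Emb(S,M,\del S)_S$ (Proposition \ref{cuttingProp}). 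No continuous choice of splitting surface in the link is ever needed; only uniqueness up to isotopy of the surfaces in a fixed complement.

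Concretely, the geometric input you are missing is twofold. First, three vertical twice-punctured discs cut $C_f$ into $C_{f^o}$ and the three complements $C_{\hat\varphi^s(f^s)}$, and these discs are unique up to isotopy with contractible embedding components (Lemma \ref{Lem3Discs}, resting on Blair--Burke--Koytcheff). Second --- and this is the genuinely new content of the paper, which your point (ii) gestures at but does not resolve --- each $C_{\hat\varphi^s(f^s)}$ must be further cut along a canonical annulus $A_s$ to extract $C_{f^s}$; for $s=\updown$ establishing canonicity of $A_\updown$ requires identifying the base-level torus of the JSJ decomposition of $C_{\hat\varphi^\updown(f)}$ and an atoroidality argument via companion tori of the unknot (Lemma \ref{Lem3Annuli}). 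Finally, the resulting abstract equivalence must be identified with $\mu$; since everything is a $K(G,1)$ this reduces to a $\pi_1$-level diagram chase with $\pi_1 B\Diffd(C_f,\del C_f)\cong\pi_0\Diffd(C_f,\del C_f)$, after which Whitehead's theorem upgrades the weak equivalence. Without the $B\Diffd$ framework and the two cutting lemmas, the space-level factorization you need does not follow from the $\pi_0$ statement of \cite{KoytcheffPrime}.
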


In addition to these algebraic considerations, this result has
a homotopical significance as it expresses the
homotopy type of the isotopy class of a $2$--string link as a function of
the homotopy types of the classes
of its prime factors. This reduces the computation of the homotopy type
of the whole $\LL$ to
figuring out the homotopy types of the components of the primes. As
mentioned above, the latter can be further
decomposed thanks to Burke and Koytcheff's infection operad defined 
in~\cite{KoytcheffInfect}. 

\subsection*{Organization of the paper} 

We define in a \hyperlink{secon}{first section} the
different spaces of embeddings at stake here:
long knots, string links and their fattened versions $\KKhat$ and $\LLhat$.
The \hyperlink{sectw}{second section} sets up the operadic framework we use. We recall in
particular the notion of colored operad and
discuss the resulting algebras. We introduce along the way the operad that
will appear in our main result,
the Swiss-cheese operad for links $\SCL$. The \hyperlink{secth}{third section} aims to define
various operadic actions on the
spaces of knots and links. We recall the constructions of Budney, Burke
and Koytcheff's actions and unify them in a
single action of $\SCL$ on $2$--string links. Finally, the \hyperlink{secfo}{fourth section}
proves the freeness result
sketched above using low dimensional and homotopical tools. 

\subsection*{Upcoming projects}

Our main statements concern the space of string links on two strands.
One might naturally wonder what happens in the $k$--stranded case for some
$k > 2$.
The conjecture that Theorems \ref{SCLActs} and \ref{freeSCL} have adaptations
to arbitrary string links seems reasonable,
since most of the techniques used 
in Subsection \ref{FreenessResultsthDimensionalTopology}
naturally generalize to the $k$--stranded case. However, lots of difficulties
arise, even at the level of isotopy classes.
Theorem \ref{freeSCL} generalizes Blair, Burke and Koytcheff's explicit
model for the monoid
$\pi_0 \LL$, but it does not provide an alternative proof for it.
Actually, Blair, Burke and Koytcheff's result is used in the very first
line of the proof of our Theorem \ref{freeSCL}.
Thus, if one wants to adapt Theorem \ref{freeSCL} to the $k$--stranded case,
some preliminary work on the monoid of
isotopy classes of string links on $k$ strands is necessary. 
A key point is the understanding of its center.
The commutation between string links on $k$ strands has already been
characterized in \cite{KoytcheffPrime},
but there are other relations among prime links that remain to be
understood. For example, the invertible $k$--stranded string links
are the pure braids on $k$ strands, which already admit a wide family of
fairly complex relations amongst themselves.
Moreover, these units are not central anymore, which makes it harder to
study prime decompositions.
A solution to these difficulties could be to accept a less explicit
construction for the replacement of $\SCL$, maybe a definition by
induction on $k$. The operad for $k$--stranded string links would rely on
a large set of colors, and would restrict to the operad for
$(k{-}1)$--stranded string links on some subcollections of colors.
The existence and freeness of an action could then be easier to prove,
but the difficulty is only shifted
towards understanding these potentially massive operads. 

Another interesting question concerns the Goodwillie--Weiss calculus,
introduced to study embedding spaces in \cite{Weiss2,Weiss}.
In the context of knots and links, this theory gives rise to two towers
of fibrations $\{T_{k}\KKhat\}$ and $\{T_{k}\LLhat\}$,
converging to the so-called polynomial approximations $T_{\infty}\KKhat$
and  $T_{\infty}\LLhat$, respectively.
Unfortunately, the natural applications $\iota_{\KKhat}: \KKhat\rightarrow
T_{\infty}\KKhat$ and
$\iota_{\LLhat}: \LLhat\rightarrow  T_{\infty}\LLhat$ are not weak
equivalences,
but they preserve a lot of homotopical information.
In particular, we know from Budney, Conant, Koytcheff and Sinha~\cite{Conant} 
that the map $\KKhat\rightarrow
T_{k}\KKhat$ is a finite type-$(k{-}1)$ knot invariant
and it has been conjectured that $T_{k}\KKhat$ is actually the universal
finite type-$(k{-}1)$ invariant.
This conjecture is already proved rationally in 
Voli\'{c}'s thesis \cite{Volic}.
Moreover, the polynomial approximations can be simplified and identified
to homotopy totalizations using the multiplicative
Kontsevich operad $K_{3}$ obtained as a compactification of configurations
of points in $\mathbb{R}^{3}$.
Briefly speaking, one has the identifications
\[
\KKhat
{\iota_{\KKhat}}
T_{\infty} \KKhat
{\,\mu_{\KKhat}} \hoTot(K_{3}\circ SO(3)), \quad
\LLhat
{\iota_{\LLhat}}
T_{\infty} \LLhat
{\mu_{\LLhat}} \hoTot(K^{2}_{3}\circ SO(3)),
\]
where $K^{2}_{3}(k)=K_{3}(2k)$ is a shifted version of the Kontsevich operad.
The applications $\mu_{\KKhat}$ and $\mu_{\LLhat}$ have been proved to be
weak equivalences by Sinha in~\cite{Sinha06}
and Munson and Voli\'{c} in~\cite{Munson14}, respectively. We know that the spaces
$\KKhat$, $T_{\infty} \KKhat$
and $\hoTot(K_{3}\circ SO(3))$ are $\cubes{2}$--algebras by \cite{Budney},
\cite{Boavida13} and \cite{Ducoulombier},
respectively. However, it is still unknown if $\iota_{\KKhat}$ and
$\mu_{\KKhat}$ are $\cubes{2}$--algebra maps.
All these questions can be extended to the colored case. From the present
work, the family $(\LLhat,\KKhat,\KKhat,\KKhat)$ is
equipped with an explicit $\SCL$--algebra structure. We believe that similar
structures exist for the families
\[
\begin{gathered}
\bigl(T_{\infty}\LLhat,T_{\infty}\KKhat,T_{\infty}\KKhat,T_{\infty}\KKhat\bigr),
\\
\bigl(\hoTot(K^{2}_{3}\circ \SO(3)),\hoTot(K_{3}\circ \SO(3)),
\hoTot(K_{3}\circ \SO(3)),\hoTot(K_{3}\circ \SO(3))\bigr),
\end{gathered}
\]
and that the zigzag of morphisms induced by $\iota_{\KKhat}$,
$\mu_{\KKhat}$,
$\iota_{\LLhat}$ and $\mu_{\LLhat}$ between the corresponding families
are morphisms of $\SCL$--algebras. 

\subsection*{Acknowledgements}

The authors are indebted to Thomas Willwacher and Robin Koytcheff for
answering numerous questions
and for their comments on the first version of this paper.
They are grateful to Dev Sinha and Victor Turchin for discussions leading
to the problem solved in the present work.
The two authors are also thankful to the referee for a very careful read
and the many helpful suggestions.
Finally, the authors acknowledge the generous support of ETH Z\"{u}rich and
WWU M\"{u}nster.
The second author is partially supported by the grant ERC-2015-StG 678156
GRAPHCPX.

\section*{General framework and notations}
\label{GeneralFrameworkandNotations}

We set up here the global 
framework we work in as well as some notations
that might not be completely standard. 

\subsection*{Topological spaces}

By spaces, we understand compactly generated Hausdorff spaces.
They form a full subcategory of topological spaces that we denote by $\Top$
by slight abuse of notation.
Many useful properties of $\Top$ have been introduced by Steenrod in
\cite{Steen}.
The standard Quillen model structure has then been adapted for it by Hovey
in \cite{Hov}.
It is a convenient category in the sense that the natural curryfication map
\[
\Top(X \times Y, Z) \cong \Top(X, \Top(Y, Z))
\]
is a homeomorphism for any three spaces $X$, $Y$ and $Z$ in $\Top$.
The need to restrict ourselves to such a subcategory arises from the
following fact:
when defining an action of an algebraic structure that it also a topological
space $A$ on a space $X$,
one can ask for the continuity of either $A \times X \rightarrow X$ or $A
\rightarrow \Top(X, X)$.
The homeomorphism above gives the equivalence between these two approaches
and
enables one to go back and forth between both frameworks.
This will be useful when dealing with operadic actions. 

\subsection*{Operations on maps}

Let $f \co A \rightarrow X$ and $g \co B \rightarrow Y$ be maps between
spaces. We use the following notations.
\begin{itemize}
\item $f \amalg g$ is the map between coproducts $A\amalg B \rightarrow
X \amalg Y$.
\item $f \oplus g$ is the map $A \amalg B \rightarrow X$ when $X=Y$.
\item $f \times g$ is the map between products $A\times B \rightarrow X
\times Y$.
\item $(f, g)$ is the map $A \rightarrow X \times Y$ when $A=B$.
\item $A^{\times n}$ is the product of $n$ copies of $A$ and $f^{\times n}$
is the map
$A^{\times n} \rightarrow B^{\times n}$.
\item $B^{\amalg n}$ is the coproduct of $n$ copies of $B$ and $f^{\amalg
n}$ is the map
$A^{\amalg n} \rightarrow B^{\amalg n}$.
\end{itemize}

\subsection*{Smooth manifolds}

When discussing manifolds, we think of usual (possibly bordered) $C^{\infty}$
manifolds.
We write $I=[0, 1]$ for the unit interval, $J=[-1, 1]$ for the
$1$--dimensional unit disk 
and $J^k = J^{\times k}$ for the $k$--dimensional unit cube.
The set of $C^{\infty}$ maps between two manifolds $M$ and $N$ is 
denoted by $C^{\infty}(M, N)$
and topologized with the usual $C^{\infty}$--topology described in
\cite{Hirsch}.
The space of embeddings, immersions, submersions or diffeomorphisms between
manifolds are
topologized as subspaces of the latter.
This turns diffeomorphism groups into topological groups and makes every
composition map continuous. 

\hypertarget{secon}{}
\section{Embedding spaces}
\label{EmbeddingSpaces}

This section aims to review the construction of various spaces of embeddings,
namely spaces of knots and $2$--stranded links.
We start by recalling the definition of the usual space of knots and
introduce three variations:
the long knots $\KK$, the framed long knots $EC(1, D^2)$ and the fat long
knots $\KKhat$.
These spaces are meant to ease algebraic and homotopical manipulations.
We also discuss the classical monoid structure on the space of knots,
its interactions with these spaces and finally adapt these constructions
to $2$--stranded links. 

\subsection{Knot spaces}
\label{EmbeddingSpacesKnot}

The first instance of a space of knots arises as the space of embeddings
$\Emb(S^1, S^3)$.
Its components $\pi_0\Emb(S^1, S^3)$ are the isotopy classes of knots in
the $3$--sphere and are
the central object of study in knot theory.
The class of the standardly embedded circle $S^1 \cof \mathbb{R}^3 \subset
S^3$ is called the trivial knot or unknot.
Given two (isotopy classes of) knots $k_1$ and $k_2$, one can define the
connected sum
$k_1 \hash k_2$ in various ways, as done for instance in \cite{Cromwell}.
Intuitively, $k_1 \hash k_2$ is obtained by cutting open $k_1$ and $k_2$
and closing them back into a single knot. An example is provided in Figure \ref{connectedSum}.
This operation turns out to be 
associative, commutative and unital with
the unknot as unit.
This turns $\pi_0\Emb(S^1, S^3)$ into a commutative monoid.
The nontrivial elements $k$ which admit no nontrivial factorization
$k=k_1 \hash k_2$ are called prime.
They are in a sense the most elementary knots.
However, there are infinitely many of them and a further decomposition
developed in \cite{BudneyJSJ}
suggests that they form a fairly wide class of knots.
The monoid structure on $\pi_0\Emb(S^1, S^3)$ is completely understood
thanks to a theorem of Schubert:

\begin{Thm}[Schubert~\cite{Schubert}]
The monoid $\pi_0\Emb(S^1, S^3)$ is the free commutative monoid generated
by prime knots.
\end{Thm}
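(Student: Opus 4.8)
The plan is to establish the two statements that together amount to freeness of the commutative monoid $\pi_0\Emb(S^1,S^3)$ on the set of prime classes: \emph{existence} of a factorization $k=p_1\varhash\cdots\varhash p_m$ into primes for every knot $k$, and \emph{uniqueness} of such a factorization up to reordering. (Commutativity, associativity and unitality are already recorded above, and the genus computation below additionally shows that the unknot is the only invertible element, so that a generating set by prime classes is well defined.) Throughout I would work with the geometric model of the connected sum: a \emph{factorizing sphere} for a knot $K\subset S^3$ is a smoothly embedded $2$-sphere $\Sigma$ meeting $K$ transversally in exactly two points; it splits $(S^3,K)$ into two ball-arc pairs, and capping each arc off by a standard trivial ball-arc pair produces knots $K_1,K_2$ with $K=K_1\varhash K_2$. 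Every expression $K=K_1\varhash K_2$ arises this way.

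For existence I would induct on the Seifert genus $g(K)$, the minimal genus of a Seifert surface bounded by $K$, the key input being additivity $g(K_1\varhash K_2)=g(K_1)+g(K_2)$. The inequality ``$\leq$'' is immediate by taking the boundary connected sum of minimal-genus Seifert surfaces across a factorizing sphere. For ``$\geq$'', take a minimal-genus Seifert surface $F$ for $K=K_1\varhash K_2$ and a factorizing sphere $\Sigma$, put them in transverse position, and remove the circles of $F\cap\Sigma$ by innermost-disk surgeries on $F$ --- by irreducibility of $S^3$ (Alexander's theorem), these can be arranged not to raise the genus --- until $F$ meets $\Sigma$ in a single arc. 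That arc displays $F$ as a boundary connected sum of Seifert surfaces for $K_1$ and $K_2$, so $g(K_1)+g(K_2)\leq g(F)=g(K)$. Now $g\geq 0$, with $g(K)=0$ forcing $K$ to bound a disk and hence to be trivial; so a non-prime knot of positive genus splits off factors of strictly smaller genus, and the induction terminates with a finite product of primes.

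For uniqueness the crux is a disjointification lemma: any two factorizing spheres $\Sigma,\Sigma'$ for $K$ may be isotoped, rel $K$, to be disjoint. To prove it, put them in transverse position and induct on the number of circles of $\Sigma\cap\Sigma'$ (all disjoint from $K$). An innermost circle $c$ on $\Sigma'$ bounds a disk $D\subset\Sigma'$ with interior off $\Sigma$; on $\Sigma$, $c$ bounds two disks, and since $c$ avoids the two points of $\Sigma\cap K$, these points split either $(2,0)$ or $(1,1)$ between the two disks. In the $(2,0)$ case, $D$ together with the $K$-free disk of $\Sigma$ is a $2$-sphere missing $K$, hence bounds a ball by Alexander's theorem; sliding $D$ across this ball removes $c$ without introducing new intersections. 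The $(1,1)$ case is subtler: surgering $F$ (here $\Sigma$) along $D$ now reconnects the two arcs of $K$, but one checks --- using the essential uniqueness of the ball bounded by a $2$-sphere in $S^3$ --- that this replaces $\Sigma,\Sigma'$ by factorizing spheres realizing the same connected-sum decomposition while strictly lowering the intersection count. Granting the lemma, a full system of factorizing spheres realizing $k=p_1\varhash\cdots\varhash p_m$ and one realizing $k=q_1\varhash\cdots\varhash q_n$ can be made simultaneously disjoint; together they cut $(S^3,K)$ into ball-arc pieces, each of which is prime by hypothesis and hence admits no further nontrivial cut, and comparing the two decompositions of the same cut-up knot forces $m=n$ and a permutation $\sigma$ with $p_i=q_{\sigma(i)}$.

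The hard part will be the $(1,1)$ case of the disjointification lemma, where an intersection circle separates the two points in which a factorizing sphere meets the knot: there the simplifying surgery genuinely interacts with the connected-sum structure, and certifying that the move preserves the decomposition while strictly decreasing complexity is precisely the geometric heart of Schubert's original argument in \cite{Schubert}. A detailed modern account of all of the above can be found in \cite{Cromwell}.
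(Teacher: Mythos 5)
The paper does not prove this statement at all --- it is imported directly from \cite{Schubert} as a black box --- so there is no internal argument to compare yours against. Your outline is the classical one (additivity of the Seifert genus for existence, disjointification of factorizing spheres for uniqueness), and the existence half is essentially complete: additivity of genus, the fact that genus $0$ characterizes the unknot, and the resulting bound on the number of nontrivial factors give both termination of the splitting process and the absence of nontrivial units, which is indeed needed for ``free commutative monoid on the primes'' to make sense.

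The uniqueness half has a genuine gap, and it sits exactly where the theorem is hard. First, even your $(2,0)$ case is not right as written: the innermost disk $D\subset\Sigma'$ is only guaranteed to be free of \emph{intersection circles}, not of the two points of $K\cap\Sigma'$, so ``$D$ together with the $K$-free disk of $\Sigma$ is a $2$-sphere missing $K$'' requires the extra observation that $|D\cap K|$ must be even (an embedded sphere in $S^3$ separates, hence meets a closed transverse curve in an even number of points) together with a separate treatment of the subcase $|D\cap K|=2$, where the surgered sphere is itself a factorizing sphere rather than an inessential one. Second, and more seriously, the $(1,1)$ case --- which by the same parity constraint forces $|D\cap K|=1$ --- is precisely the content of Schubert's uniqueness theorem, and you only assert that ``one checks'' the surgery preserves the decomposition while decreasing complexity; without exhibiting a complexity function that provably drops (Schubert's original one is delicate), the induction does not close. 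If you want a route better aligned with the machinery this paper actually runs on, uniqueness can instead be extracted from the uniqueness of the JSJ decomposition (Theorem \ref{JSJDecomp}) applied to the knot exterior: as recalled in the sketch of Theorem \ref{BudneyFreeness}, Budney shows in \cite{BudneyJSJ} that the base-level JSJ tori of the complement of a composite knot are exactly the swallow-follow tori separating the prime factors, so uniqueness of the minimal torus system yields uniqueness of the prime factorization with no innermost-circle bookkeeping. A complete elementary account along your lines is in \cite{Cromwell}, but the two steps above are the ones you would actually have to write out.
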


\begin{figure}
\small
\def\svgwidth{\hsize}
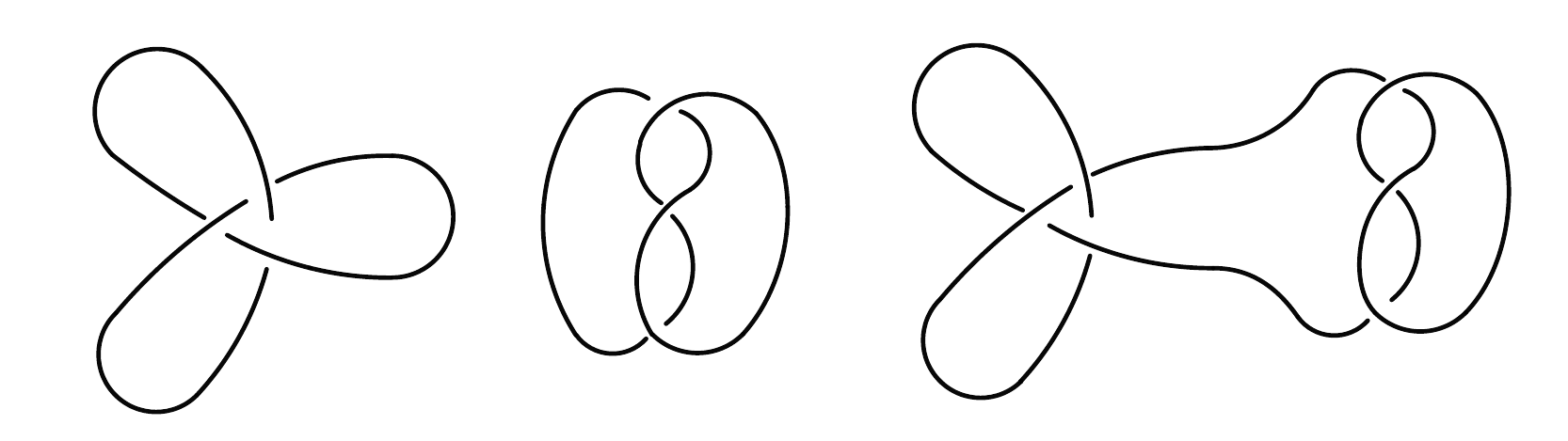
\caption{Illustration of the connected sum of two knots.}
\label{connectedSum}
\end{figure}

We now introduce long knots. They are a mild variation of usual knots
for which
the connected sum is easier to deal with. Let $\imath\co \mathbb{R}\rightarrow
\mathbb{R}^3$, with $\imath(t)= (t, 0, 0)$ be the standard embedding of
the real line in $\mathbb{R}^3$.

\begin{Defn}
A \emph{long knot} is an embedding $\mathbb{R} \cof \mathbb{R}^3$ that
agrees with the standard embedding $\imath$
outside of $J = [-1, 1]$ and maps the interior of $J$ in the interior of
$J \times D^2 \subset \mathbb{R}^3$.
The space of long knots is denoted by $\KK$.
One can alternatively think of a long knot as a proper embedding $J \cof
J \times D^2$
whose values and derivatives at $\del J$ match those of $\imath$.
\end{Defn}

With these conditions on the embeddings, it is natural to define
a binary stacking operation
between long knots as follows. Let $L, R\co \mathbb{R}^3 \rightarrow
\mathbb{R}^3$ be the maps sending
$(x, y, z)$ to $(\frac{x-1}{2}, y, z)$ and $(\frac{x+1}{2}, y, z)$,
respectively.
Given $k_1$ and $k_2$ in $\KK$, we define the concatenation of $k_1$
and $k_2$ as the long knot
that restricts to $t \mapsto L \circ k_1(2t+1)$ on $[-1, 0]$ and to $t
\mapsto R \circ k_2(2t-1)$ on $[0, 1]$.
This operation and its commutativity up to homotopy are illustrated in Figure 
\ref{knotCommutativity}.
We still denote this operation 
by $\hash$ as it is
the analogue of the connected sum in the following sense.
Each long knot is linear outside of $J$ and can therefore be extended to
an embedding $S^1 \cof S^3$
by compactifying the domain and codomain. This specifies an inclusion $\KK
\cof \Emb(S^1, S^3)$
which turns out to be a bijection on $\pi_0$. It is 
easy to verify that
the concatenation of two knots is sent to their connected sum.
The isotopy classes $\pi_0\KK$ therefore inherit a monoid structure. When
$\PP$ denotes the collection of long knots which are prime, Schubert's
theorem applies and gives:

\begin{Thm}
The monoid $\pi_0\KK$ is the free commutative monoid on the basis $\pi_0\PP$.
\end{Thm}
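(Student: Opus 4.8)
The plan is to deduce this statement from the two preceding results: Schubert's Theorem on the monoid $\pi_0\Emb(S^1, S^3)$, and the identification of $\pi_0\KK$ with $\pi_0\Emb(S^1, S^3)$ as a monoid that was just established in the paragraph preceding the statement. Since the text has already argued that the inclusion $\KK \cof \Emb(S^1, S^3)$ induces a bijection on $\pi_0$, and that this bijection is compatible with the two product structures (concatenation on the left, connected sum on the right), there is essentially nothing left to do but transport Schubert's freeness result across this isomorphism.

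In more detail, the first step is to invoke the monoid isomorphism $\phi : \pi_0\KK \xrightarrow{\sim} \pi_0\Emb(S^1, S^3)$ coming from the compactification map. The second step is to observe that a monoid isomorphism preserves the property of being free on a given basis: if $M$ is free commutative on a subset $B \subseteq M$, then for any isomorphism $\phi: N \xrightarrow{\sim} M$, the monoid $N$ is free commutative on $\phi^{-1}(B)$. This is immediate from the universal property — any map from $\phi^{-1}(B)$ into a commutative monoid $C$ factors uniquely through $N$ by precomposing the induced map $M \to C$ with $\phi$. The third step is to identify the relevant basis: Schubert's Theorem gives that $\pi_0\Emb(S^1, S^3)$ is free on the prime knots, and one checks that under $\phi$ the preimage of the set of prime classes is exactly $\pi_0\PP$, where $\PP$ is the collection of prime long knots. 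This last point is really a matter of definition, since a long knot is declared prime precisely when its image in $\Emb(S^1, S^3)$ is prime, equivalently when it admits no nontrivial factorization under concatenation (the two notions agreeing because $\phi$ is a monoid map).

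The only genuinely substantive ingredient — and it has already been asserted in the running text — is that the compactification map $\KK \to \Emb(S^1, S^3)$ is indeed a $\pi_0$-bijection and sends concatenation to connected sum. So in the write-up I would simply recall that fact, cite Schubert, and conclude. There is no real obstacle here; the statement is a formal corollary. If one wanted to be careful, the single point deserving a sentence of justification is why the preimage of the primes is \emph{all} of $\pi_0\PP$ and nothing more: this follows because $\phi$ and $\phi^{-1}$ both respect factorizations, so $k \in \KK$ maps to a prime class if and only if $k$ itself is prime, which is the definition of $\pi_0\PP$.

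Concretely, the proof would read roughly: ``By the discussion above, the compactification induces a monoid isomorphism $\pi_0\KK \cong \pi_0\Emb(S^1, S^3)$ carrying concatenation to connected sum. Under this isomorphism, the prime long knots correspond bijectively to the prime knots. Since the latter freely generate $\pi_0\Emb(S^1, S^3)$ as a commutative monoid by Schubert's Theorem, the former freely generate $\pi_0\KK$.'' This is the entire argument.
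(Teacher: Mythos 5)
Your argument is exactly the one the paper intends: the theorem is stated as an immediate consequence of Schubert's Theorem transported along the monoid isomorphism $\pi_0\KK \cong \pi_0\Emb(S^1, S^3)$ induced by compactification, which the preceding paragraph has just established. The proposal is correct and matches the paper's approach.
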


\begin{figure}
\small
\def\svgwidth{\hsize}
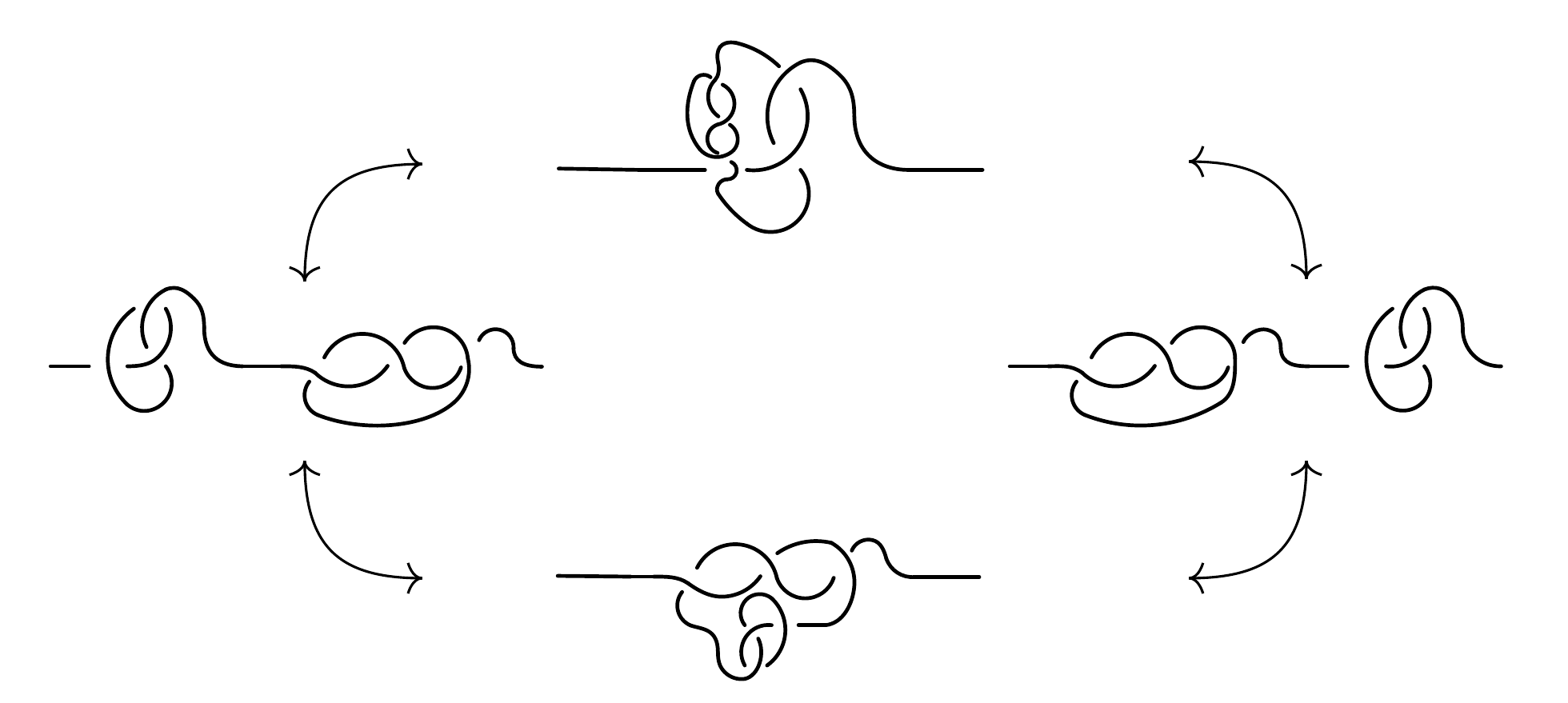
\caption{Illustration of the commutativity in the monoid $\pi_0 \KK$.}
\label{knotCommutativity}
\end{figure}

We now define framed long knots. The latter are meant to approximate the
long knots defined above while
being composable. The idea is to thicken the strand $\mathbb{R}$ into a
long tube $\mathbb{R} \times D^2$.
This definition is due to Budney and originates in \cite{Budney}. We define:

\begin{Defn}[Budney~\cite{Budney}]
A framed long knot is an embedding
$\mathbb{R}\times D^2 \rightarrow \mathbb{R} \times D^2$
that restricts to the identity outside of $J \times D^2$.
The space of framed long knots is denoted by $EC(1, D^2)$.
When $\supp(f)$ denotes the support of an embedding $f\co \mathbb{R} \times
D^2 \cof \mathbb{R} \times D^2$,
ie the closure of 
$\{(t, x) \in \mathbb{R} \times D^2 \mid f(t, x) \neq (t, x)\}$, 
the condition for $f$ to lie in $EC(1, D^2)$
can be reformulated as $\supp(f) \subset J \times D^2$.
\end{Defn}

Note that this space is still equipped with a stacking operation
$\hash$ defined just as in the case
of long knots. It also still descends to an associative, commutative unital
pairing on isotopy classes.
There is a restriction map $EC(1, D^2) \rightarrow \KK$, 
defined by $f \mapsto f|_{\mathbb{R} \times (0, 0)}$, that preserves the
concatenation. But, it does not induce a bijection on~$\pi_0$. To see this,
consider the
diffeomorphism $r$ of $J \times D^2$ that progressively performs a full
turn rotation on the disk factor.
One can parametrize $r\co (t, x) \mapsto (t, e^{i\pi(t+1)}x)$.
This diffeomorphism can be isotoped about $\del J \times D^2$ to be extended
into an element of $EC(1, D^2)$.
Now, for any long knot $k$ and $f$ an extension in $EC(1, D^2)$,
each composite $f \circ r^{\circ n}$ also extends $k$ but no two are
isotopic.
This produces infinitely many components in the fiber over $k$, which
shows that the restriction map
does not induce a bijection on $\pi_0$.
As of here, this twisting 
phenomenon is an unwanted byproduct of the
thickening process.
We will get rid of it when defining $\KKhat$ as an unframed subspace of
$EC(1, D^2)$.
To do so, we need to quantify the framing of a knot, which we do via the
framing number.

\begin{Defn}
\label{DefFramingNumber}
We define the \emph{framing number} $\omega(f)$ of a framed long knot $f$ 
of $EC(1, D^2)$
as the linking number $\lk(f|_{\mathbb{R} \times (0,0)}, f|_{\mathbb{R}
\times (0, 1)})$.
Strictly speaking, the linking number is only defined between disjoint
closed curves.
We deal with this problem by identifying the curves above with their
extension by compactification $S^1 \rightarrow S^3$
and isotoping $f|_{\mathbb{R} \times (0, 1)}$ about the point at infinity
to keep the disjointness.
\end{Defn}

Intuitively, the linking number counts the number of times a
closed curve winds around 
another.
Here, we think of $\omega(f)$ as the number of times a curve on the surface
of the knot
wraps around the core $f|_{\mathbb{R} \times (0,0)}$.
This provides a way to quantify the framing of the elements of $EC(1,
D^2)$ and we have $\omega(r^{\circ n})=n$. Another example is provided in Figure
\ref{framingNumber}.
Since the linking number can be computed by counting crossings in diagrams,
it is easy to see that $\omega$ is additive with respect to the stacking
product.
It is also isotopy invariant and therefore descends to a morphism of monoids
$\pi_0 EC(1, D^2) \rightarrow \mathbb{Z}$. 

\begin{figure}
\small
\def\svgwidth{\hsize}
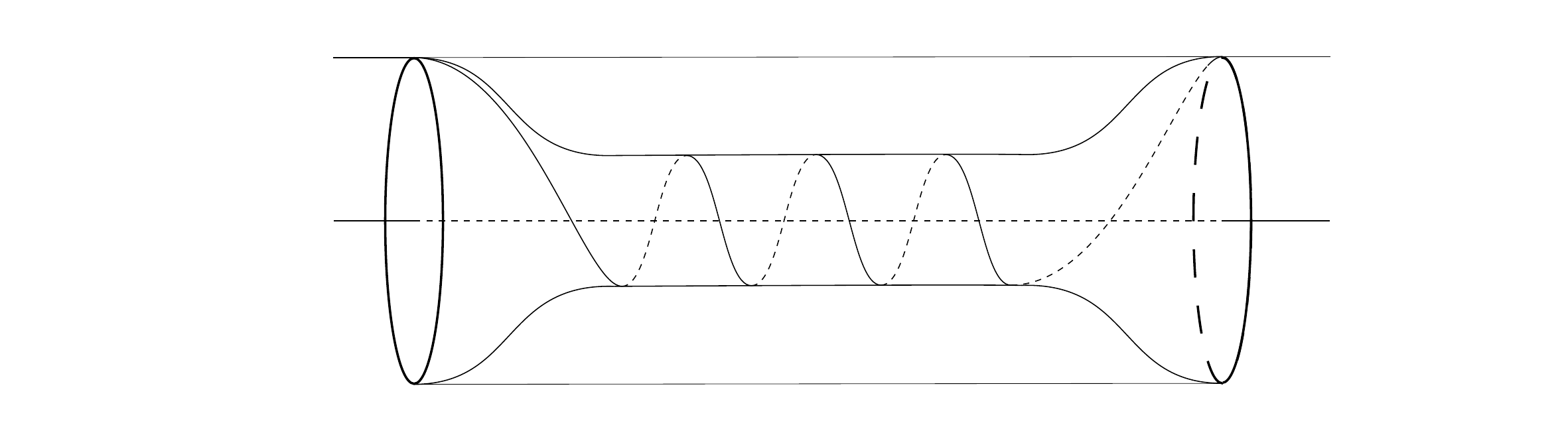
\caption{A framed long knot $f$ with framing number $4$.}
\label{framingNumber}
\end{figure}

We are finally able to define $\KKhat$, our 
preferred model to approximate
$\KK$. We simply set:

\begin{Defn}[Budney \cite{Budney}]
The space of \emph{fat long knots} is the subspace $\KKhat = \omega^{-1}(0)$.
\end{Defn}

Fat long knots are stable under $\hash$ thanks to the additivity
of $\omega$.
The primes in $\KKhat$ are denoted by $\PPhat$.
There still is a restriction map $\KKhat \rightarrow \KK$ which preserves
this structure.
$\KKhat$~is a good approximation for $\KK$ in the sense that:

\begin{Prop}[Budney \cite{Budney}]
The restriction map $\KKhat \rightarrow \KK$ is a homotopy equivalence.
\end{Prop}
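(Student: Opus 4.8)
The plan is to exhibit the restriction map $\rho\colon \KKhat \to \KK$, $f\mapsto f_{|\bb{R}\times(0,0)}$, as a Serre fibration all of whose fibers are nonempty and contractible, and then to promote the resulting weak homotopy equivalence to a genuine homotopy equivalence. I would organize the argument into three steps: (i) $\rho$ is onto; (ii) $\rho$ is a Serre fibration; (iii) the fiber of $\rho$ over any long knot is contractible. Granting these, the long exact sequence of the fibration shows $\rho$ is a weak equivalence, and since spaces of smooth embeddings have the homotopy type of CW complexes, Whitehead's theorem turns $\rho$ into a homotopy equivalence.

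Step (i) is quick. Given $k\in\KK$, regarded as a proper embedding $J\cof J\times D^2$, the tubular neighbourhood theorem applied rel $\del J$, together with the triviality of the normal bundle $\nu_k$, produces a framed long knot $f_0\in EC(1,D^2)$ with core $k$. Its framing number $\omega(f_0)$ equals some integer $n$, so by additivity of $\omega$ and the equality $\omega(r)=1$ the framed long knot $f_0\circ r^{\circ(-n)}$ — still with core $k$, since $r$ fixes $\bb{R}\times(0,0)$ — lies in $\KKhat=\omega^{-1}(0)$ and restricts to $k$.

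Step (iii) carries the homotopical content. Fix $k\in\KK$ and consider first the fiber over $k$ of the restriction map $EC(1,D^2)\to\KK$, namely the space of thickenings of $k$ inside $\bb{R}\times D^2$ that are standard near $\del J$. Differentiating a thickening along the core in the disc directions yields a trivialisation of $\nu_k$ that agrees with the standard one over $\del J$, and by the parametrised uniqueness of tubular neighbourhoods this differentiation map is a homotopy equivalence. Since $\nu_k$ is a trivial rank-$2$ bundle over the interval $J$, the space of such trivialisations is homeomorphic to $\Map\big((J,\del J),(GL_2(\bb{R}),\id)\big)\simeq\Map\big((J,\del J),(O(2),\id)\big)=\Omega O(2)\simeq\Omega S^1$, whose path components are indexed by $\bb{Z}$ and each of which is contractible. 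Twisting a thickening by $r^{\circ m}$ changes $\omega$ by $m$ and realizes a generator of $\pi_0$, so $\omega$ induces an affine bijection from $\pi_0$ of this fiber onto $\bb{Z}$; hence its intersection with $\KKhat=\omega^{-1}(0)$ is a single, contractible path component — which is precisely the fiber of $\rho$ over $k$.

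Step (ii) — that $\rho$ is a Serre fibration — is the point I expect to be the main obstacle, and the one demanding the most care. The natural tool is the parametrised isotopy extension theorem with support control: a cube of long knots, all agreeing with $\imath$ outside $J$, can be covered by an ambient isotopy of $\bb{R}\times D^2$ supported in $\inter(J\times D^2)$ and equal to the identity elsewhere; postcomposing a chosen lift at the base point with this ambient isotopy yields a lift of the cube into $EC(1,D^2)$, which remains in $\KKhat$ because $\omega$ is isotopy invariant. Making the support bookkeeping precise — and, more generally, handling the non-compactness of the objects by exploiting that everything happens inside $J\times D^2$ — is the delicate part; together with the homotopy-theoretic uniqueness of tubular neighbourhoods used in step (iii), it is where the real work lies. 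The remainder is routine manipulation of the additive invariant $\omega$.
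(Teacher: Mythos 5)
Your proposal is correct and is essentially the argument of the cited reference: the paper itself states this proposition without proof, deferring to Budney, who likewise exhibits $EC(1,D^2) \rightarrow \KK$ as a fibration via parametrised isotopy extension, identifies the fiber over a long knot with the space of normal framings $\Omega SO(2) \simeq \bb{Z}$ with contractible components, and observes that $\omega$ singles out the zero component, which is exactly the fiber of $\KKhat \rightarrow \KK$. The only cosmetic remark is that the derivative in the disc directions lands in $GL_2^+(\bb{R}) \simeq SO(2)$ rather than $O(2)$, which changes nothing since based loops stay in the identity component.
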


In particular, we get an isomorphism on $\pi_0$ which enables
us to 
transfer Schubert's theorem
to fat long knots:

\begin{Cor}
\label{KnotStructureThm}
The monoid $\pi_0\KKhat$ is the free commutative monoid on the basis
$\pi_0 \PPhat$.
\end{Cor}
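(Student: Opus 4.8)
The plan is to chain together the two results already established just above, namely Schubert's Theorem in the form
\[
\pi_0\KK \cong \text{free commutative monoid on } \pi_0\PP,
\]
and Budney's Proposition asserting that the restriction map $\rho\colon\KKhat\rightarrow\KK$ is a homotopy equivalence. First I would record that $\rho$ is a map of monoids: this was noted in the text, since the restriction $f\mapsto f_{|\bb{R}\times(0,0)}$ preserves the stacking operation $\varhash$, and it carries $\KKhat$ into $\KK$ because fat long knots are genuine long knots after forgetting the disc directions. Consequently $\pi_0\rho\colon\pi_0\KKhat\rightarrow\pi_0\KK$ is a morphism of commutative monoids. Since $\rho$ is a homotopy equivalence, $\pi_0\rho$ is a bijection, hence an isomorphism of monoids.

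Next I would pin down the image of $\pi_0\PPhat$ under this isomorphism. By definition $\PPhat$ consists of the prime elements of the monoid $\pi_0\KKhat$; because $\pi_0\rho$ is a monoid isomorphism, it sends prime elements to prime elements and reflects primality, so it restricts to a bijection $\pi_0\PPhat\xrightarrow{\ \cong\ }\pi_0\PP$. Transporting the free commutative monoid structure of $\pi_0\KK$ on the basis $\pi_0\PP$ through the inverse isomorphism $(\pi_0\rho)^{-1}$ then exhibits $\pi_0\KKhat$ as the free commutative monoid on $\pi_0\PPhat$. That is exactly the assertion of the Corollary.

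This argument is essentially formal once the inputs are in place, so I do not expect a genuine obstacle; the only point requiring a line of care is the compatibility of $\rho$ with $\varhash$, i.e. that concatenating two fat long knots and then restricting to the core agrees with restricting first and then concatenating the resulting long knots. This is immediate from the explicit formulas for the stacking operation (the rescalings $L$ and $R$ act only on the first coordinate and commute with projecting away the $D^2$-factor), and from the fact that the identity embedding of $\bb{R}\times D^2$ restricts to the standard embedding $\imath$. One should also note in passing that $\rho$ is well defined on the nose — a fat long knot does map $\bb{R}$ into $\bb{R}\times D^2\subset\bb{R}^3$ agreeing with $\imath$ outside $J$ — so no homotopy-theoretic correction is needed before passing to $\pi_0$.
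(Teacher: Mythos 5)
Your argument is exactly the one the paper intends: the corollary is deduced by transporting Schubert's Theorem for $\pi_0\KK$ across the monoid isomorphism $\pi_0\KKhat\cong\pi_0\KK$ induced by the restriction map, which is a homotopy equivalence preserving $\varhash$. The extra care you take (primality is preserved and reflected by a monoid isomorphism, and $\rho$ is a strict monoid map) only makes explicit what the paper leaves implicit.
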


\subsection{Link spaces}
\label{EmbeddingSpacesLink}

We now adapt these constructions for $2$--links.
Most of this 
generalization work has already been carried out by Burke
and Koytcheff in \cite{KoytcheffInfect}.
The space of usual $2$--links arises as $\Emb(S^1 \amalg S^1, S^3)$.
There is no canonical version of a connected sum operation here, as there
is no preferred strand in each link
for one to merge.
As in the case of long knots, the space of string links $\LL$ deals with
this problem by setting a framework
where a stacking operation is naturally defined.
Let $\imath_2\co \mathbb{R} \amalg \mathbb{R} \cof \mathbb{R}^3$ be the
embedding of two copies of the real line in $\mathbb{R}^3$ mapping the
first copy as $t \mapsto \bigl(t, 0, \tfrac{1}{2}\bigr)$ and the other one as $t
\mapsto \bigl(t, 0, \tfrac{-1}{2}\bigr)$.
We refer to $\imath_2$ as the standard embedding for links with two
strands. We then define:

\begin{Defn}
A \emph{$2$--string link} is an embedding $\mathbb{R} \amalg \mathbb{R}
\cof \mathbb{R}^3$ that agrees with $\imath_2$
outside of $J \amalg J$ and maps the interior of $J \amalg J$ in the
interior of $J \times D^2 \subset \mathbb{R}^3$.
The space of $2$--string links is denoted by $\LL$.
One can alternatively think of a $2$--string link as a proper embedding $J
\amalg J \cof J \times D^2$
whose values and derivatives at $\del J \amalg \del J$ match those of
$\imath_2$.
\end{Defn}

A binary stacking operation $\hash$ can now be defined on $\LL$
as in the case of long knots.
It turns $\pi_0\LL$ into a monoid with unit $\imath_2$.
A $2$--string link is said to be prime if it is not invertible but cannot
be factored without an invertible element.
There is an injection $\LL \cof \Emb(S^1 \amalg S^1, S^3)$ obtained by
closing a truncated link
$f|_{J \amalg J}$ with two fixed smooth curves from $\bigl(-1, \tfrac{ \pm
1}{2}\bigr)$ to $\bigl(1, \tfrac{ \pm 1}{2}\bigr)$
as illustrated
in Figure \ref{stringLinknotLink}. 
But, this inclusion does not induce a bijection on
isotopy classes.
Indeed, there are pairs of nonisotopic $2$--string links that yield isotopic
links once closed as shown in Figure \ref{stringLinknotLink}.
Therefore, studying string links slightly differs from usual link theory.

\begin{figure}
\small
\def\svgwidth{\hsize}
%% Creator: Inkscape 1.1 (c68e22c387, 2021-05-23), www.inkscape.org
%% PDF/EPS/PS + LaTeX output extension by Johan Engelen, 2010
%% Accompanies image file '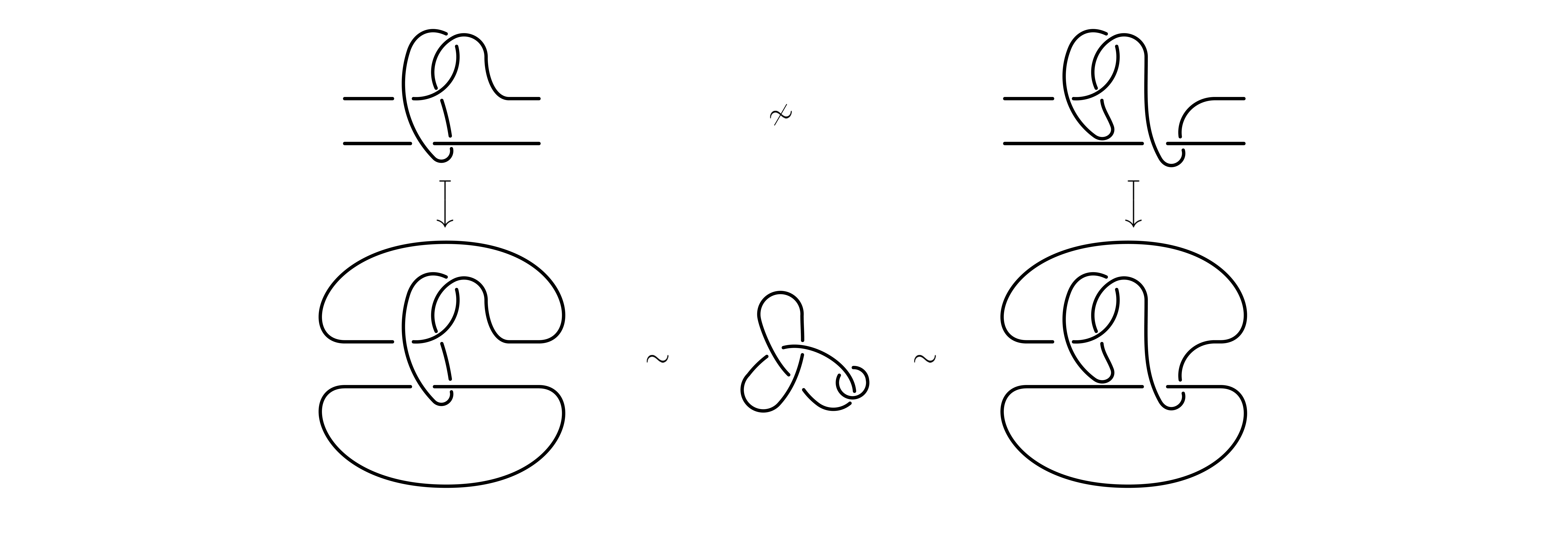' (pdf, eps, ps)
%%
%% To include the image in your LaTeX document, write
%%   \input{<filename>.pdf_tex}
%%  instead of
%%   \includegraphics{<filename>.pdf}
%% To scale the image, write
%%   \def\svgwidth{<desired width>}
%%   \input{<filename>.pdf_tex}
%%  instead of
%%   \includegraphics[width=<desired width>]{<filename>.pdf}
%%
%% Images with a different path to the parent latex file can
%% be accessed with the `import' package (which may need to be
%% installed) using
%%   \usepackage{import}
%% in the preamble, and then including the image with
%%   \import{<path to file>}{<filename>.pdf_tex}
%% Alternatively, one can specify
%%   \graphicspath{{<path to file>/}}
%% 
%% For more information, please see info/svg-inkscape on CTAN:
%%   http://tug.ctan.org/tex-archive/info/svg-inkscape
%%
\begingroup%
  \makeatletter%
  \providecommand\color[2][]{%
    \errmessage{(Inkscape) Color is used for the text in Inkscape, but the package 'color.sty' is not loaded}%
    \renewcommand\color[2][]{}%
  }%
  \providecommand\transparent[1]{%
    \errmessage{(Inkscape) Transparency is used (non-zero) for the text in Inkscape, but the package 'transparent.sty' is not loaded}%
    \renewcommand\transparent[1]{}%
  }%
  \providecommand\rotatebox[2]{#2}%
  \newcommand*\fsize{\dimexpr\f@size pt\relax}%
  \newcommand*\lineheight[1]{\fontsize{\fsize}{#1\fsize}\selectfont}%
  \ifx\svgwidth\undefined%
    \setlength{\unitlength}{6236.22047244bp}%
    \ifx\svgscale\undefined%
      \relax%
    \else%
      \setlength{\unitlength}{\unitlength * \real{\svgscale}}%
    \fi%
  \else%
    \setlength{\unitlength}{\svgwidth}%
  \fi%
  \global\let\svgwidth\undefined%
  \global\let\svgscale\undefined%
  \makeatother%
  \begin{picture}(1,0.34090909)%
    \lineheight{1}%
    \setlength\tabcolsep{0pt}%
    \put(0,0){\includegraphics[width=\unitlength,page=1]{nonIsotopicStringLinks.pdf}}%
    \put(0.49527211,0.26787899){\color[rgb]{0,0,0}\makebox(0,0)[lt]{\lineheight{1.25}\smash{\begin{tabular}[t]{l}$\not\sim$\end{tabular}}}}%
    \put(0.41447232,0.11158787){\color[rgb]{0,0,0}\makebox(0,0)[lt]{\lineheight{1.25}\smash{\begin{tabular}[t]{l}$\sim$\end{tabular}}}}%
    \put(0.58501176,0.11158787){\color[rgb]{0,0,0}\makebox(0,0)[lt]{\lineheight{1.25}\smash{\begin{tabular}[t]{l}$\sim$\end{tabular}}}}%
    \put(0.28371271,0.20680058){\color[rgb]{0,0,0}\makebox(0,0)[lt]{\lineheight{1.25}\smash{\begin{tabular}[t]{l}$\downmapsto$\end{tabular}}}}%
    \put(0.72267928,0.20680058){\color[rgb]{0,0,0}\makebox(0,0)[lt]{\lineheight{1.25}\smash{\begin{tabular}[t]{l}$\downmapsto$\end{tabular}}}}%
  \end{picture}%
\endgroup%

\caption{The two top string links are not isotopic (closing the first one
up with two vertical lines
results in a trivial knot, doing so with the second one yields a trefoil
knot).
The corresponding links are however isotopic.}
\label{stringLinknotLink}
\end{figure}

Let us spend some time to investigate $\pi_0\LL$.
We first identify the invertible elements.
There is a canonical map from the pure braid group on two strands to
$\pi_0\LL$ sending a pure braid
to its isotopy class as a string link. It is a morphism of monoids that
only maps to units in $\pi_0\LL$ since
braids form a group. This association is easily 
shown to be injective:
the linking number map $\lk\co \LL \rightarrow \Emb(S^1 \amalg S^1, S^3)
\rightarrow \mathbb{Z}$
descends to a left inverse when one identifies the pure braids on two
strands with the integers in the natural way.
This provides a whole collection of invertible elements and it turns out
that every unit in $\pi_0\LL$ is
of this form. Observe as well that these invertible links commute with
every other link: an isotopy
exhibiting this relation is suggested by Figure \ref{braidsCommute}.
Let now $\LL^0$ be the preimage of $0$ through the linking number map $\lk$.
The injection of the braid group provides a section in the short exact
sequence
\[
\pi_0\LL^0 \rightarrow \pi_0\LL \rightarrow \mathbb{Z}.
\]
Thus $\pi_0\LL$ splits as $\pi_0\LL^0 \times \mathbb{Z}$ and we can focus
on the first factor.

\begin{figure}
\small
\def\svgwidth{\hsize}
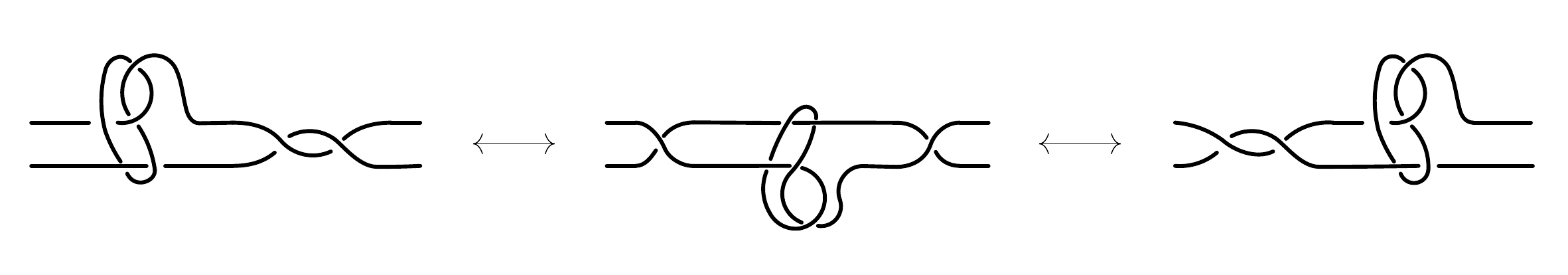
\caption{Illustration of the commutation between a braid and an arbitrary
string link.}
\label{braidsCommute}
\end{figure}

The monoid $\pi_0\LL^0$ is not commutative but it contains several copies
of $\pi_0\KK$ in its center.
Indeed, consider the injective morphism $\varphi^\up\co \pi_0\KK \rightarrow
\pi_0\LL^0$ mapping
a long knot $k$ to the string link whose upper strand is knotted according
to $k$ and does not interact with
the unknotted lower strand. An illustration of $\varphi^\up$ is given in Figure 
\ref{Phi}.
The image of $\varphi^\up$ is a copy of $\pi_0\KK$ lying in $\pi_0\LL^0$, and
one can build a similar morphism $\varphi^\down$ by switching the roles
of the strands.
A third copy of the knot monoid can be found as follows.
Consider the morphism $\varphi^\updown$ that sends a knot $k$ to the link
whose strands are parallel
and unlinked but knotted according to $k$.
This $\varphi^{\updown}$ maps to a third copy of $\pi_0 \KK$ in $\pi_0
\LL^0$.

\begin{figure}[b]
\small
\def\svgwidth{\hsize}
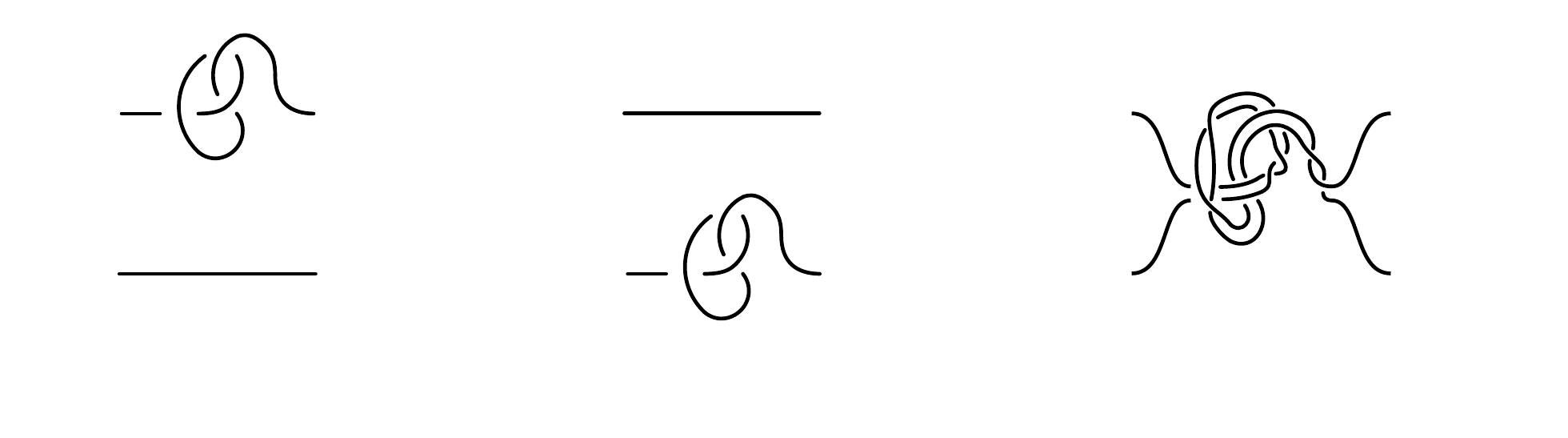
\caption{Illustration of the morphisms $\varphi^s$, $s \in \{\up, \down,
\updown\}$.}
\label{Phi}
\end{figure}

The images of any two of these morphisms intersect only in the component
of $\imath_2$ so that
$\pi_0\KK ^{\times 3}$ actually lives in $\pi_0\LL^0$.
Every string link in the image of a $\varphi^s$, for 
$s \in \{\up, \down, \updown\}$, commutes with any other link.
The structure theorem for $2$--string links proved by Blair, Burke and
Koytcheff in \cite{KoytcheffPrime}
actually shows that the center of $\pi_0\LL^0$ is generated by the images
of the maps $\varphi^s$,
alongside 
the fact that the remaining links are freely generated by
some prime elements.
More precisely, when $\QQ$ denotes the prime $2$--string links in $\LL$
that do not belong to the image of a
$\varphi^s$ and when $\QQ^0=\QQ \cap \LL^0$, one has the following result:

\begin{Thm}[Blair, Burke and Koytcheff~\cite{KoytcheffPrime}]
\label{LinkStructureThm}
The monoid $\pi_0\LL^0$ is isomorphic to the product of $\pi_0\KK^{\times
3}$ and the free
(noncommutative) monoid on the basis $\pi_0\QQ^0$.
Moreover, an 
isomorphism is induced by the inclusion
$\pi_0\QQ^0 \cof \pi_0\LL^0$ and the maps~$\varphi^s$.
\end{Thm}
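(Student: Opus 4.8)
This is a unique-factorization theorem, so the plan is to reduce it to three statements and then recombine them formally. Write $M=\pi_0\LL^0$. The three statements are: \textbf{(E)} every class of $M$ is a finite $\varhash$-product of $\varhash$-prime classes, which — after absorbing pure-braid factors by their centrality — may be taken to lie in $\LL^0$; \textbf{(U)} such a factorization is unique up to transposing consecutive factors that commute in $M$; \textbf{(C)} a prime class is central in $M$ if and only if it lies in the image of one of $\varphi^\up$, $\varphi^\down$, $\varphi^\updown$. Granting (E), (U), (C): since the images $\mathrm{im}\,\varphi^s$ pairwise meet only in $\imath_2$ and each $\varphi^s$ is an injective monoid map, the central primes split into the three disjoint families $\varphi^s(\pi_0\PP)$, so collecting central factors exhibits the submonoid of central classes as $\mathrm{im}\,\varphi^\up\times\mathrm{im}\,\varphi^\down\times\mathrm{im}\,\varphi^\updown\cong\pi_0\KK^{\times 3}$, which is free commutative on the three copies of $\pi_0\PP$ by Schubert's theorem \cite{Schubert}; the remaining primes are indexed by $\pi_0\QQ^0$ and by (U) satisfy no relations among themselves, so they generate the free (non-commutative) monoid on $\pi_0\QQ^0$; and (U) again identifies $M$ with the internal product of these two submonoids. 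The resulting isomorphism is precisely the one induced by the $\varphi^s$ and by the inclusion $\pi_0\QQ^0\hookrightarrow M$.

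For \textbf{(E)} I would encode a nontrivial factorization by a decomposing surface: a properly embedded disk in $J\times D^2$ isotopic to a slice $\{t\}\times D^2$ and transverse to $L$ in exactly one point of each strand, or — after closing $L$ up to $\widehat L\subset S^3$ — a $2$-sphere meeting $\widehat L$ in two points of a single component (this case splits off a local knot on that strand, i.e. a factor in $\mathrm{im}\,\varphi^\up$ or $\mathrm{im}\,\varphi^\down$; the $\varphi^\updown$-summands arise as decomposing disks bounding a parallel knotted tangle on one side). The substance is finiteness: a maximal system of pairwise disjoint, pairwise non-parallel, essential decomposing surfaces in the compact exterior of $\widehat L$ is finite by the Kneser--Haken normal-surface finiteness theorem; this bounds the length of any chain of successive factorizations and so yields existence of a prime factorization.

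For \textbf{(U)} I would run the classical innermost-disk exchange: realize two prime factorizations by two systems of decomposing surfaces, put them in general position, and eliminate their intersection circles one innermost disk at a time using irreducibility of the intermediate pieces, until the two systems are disjoint; the resulting bijection between pieces matches the factors up to reordering those that can be isotoped into disjoint sub-balls, and such a reordering is exactly a commutation in $M$. The hard input is \textbf{(C)}. Here I would combine Budney's JSJ and companionship description of link exteriors in $S^3$ from \cite{BudneyJSJ} with the commutation criterion of \cite{KoytcheffPrime}: a prime that commutes with \emph{every} partner must be separated from an arbitrary partner by a $2$-sphere or an essential annulus in the ambient exterior, and determining which surfaces can serve simultaneously for all partners forces the prime's knotting to be confined to one strand, to the other strand, or to run parallel along both — exactly the families $\mathrm{im}\,\varphi^\up$, $\mathrm{im}\,\varphi^\down$, $\mathrm{im}\,\varphi^\updown$. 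The reverse inclusions are the commutations already recorded above, every link in the image of a $\varphi^s$ commuting with every string link.

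The main obstacle is \textbf{(C)}. Statements (E) and (U) are faithful transcriptions of the Kneser--Milnor template into the ``long'', two-strand setting and require little beyond bookkeeping; but characterizing \emph{central} primes means converting an equality of isotopy classes into the existence of a prescribed separating surface and then classifying every such surface, which is where the JSJ decomposition and a genuine case analysis of the Seifert-fibered and hyperbolic pieces of the link exterior become unavoidable.
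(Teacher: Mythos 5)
The paper itself contains no proof of this statement: it is imported wholesale from \cite{KoytcheffPrime} (and is used as an input to Theorem \ref{freeSCL}), so the only available comparison is with what the paper reports about the cited argument --- namely that composite links are split by $n-1$ vertical twice-punctured discs into the exteriors of their prime factors, that these discs are unique up to isotopy, and that centrality is detected by essential surfaces and the JSJ decomposition. Your outline (existence of a prime factorization via decomposing punctured discs with finiteness from Kneser--Haken, uniqueness via innermost-disc exchanges, and a separate characterization of the central primes) is the same strategy, and the way you recombine (E), (U), (C) into the product decomposition is sound.

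There is, however, a genuine gap exactly where you place the difficulty. Step (C) is not an argument: ``combine Budney's JSJ description with the commutation criterion of \cite{KoytcheffPrime}'' defers the crux to the very source whose theorem you are reconstructing, and that commutation criterion (a string link commutes with everything iff it is a product of split links and double cables, i.e.\ lies in the submonoid generated by the images of the $\varphi^s$) is essentially the content of the theorem; without an independent proof of it you cannot conclude that the central primes are exactly the $\varphi^s(\pi_0\PP)$. Two subsidiary points are also asserted rather than proved: (i) that $\varphi^s(p)$ is prime in $\pi_0\LL^0$ when $p$ is a prime long knot --- injectivity of $\varphi^s$ does not preserve primality, and one must rule out a factorization of $\varphi^\up(p)$ involving a nontrivial $\QQ^0$- or $\varphi^\down$-factor by controlling all decomposing discs of its exterior; and (ii) that the three central copies of $\pi_0\KK$ generate a \emph{direct} product --- pairwise intersection in $\{\imath_2\}$ is necessary but not sufficient, and directness has to be extracted from (U) applied to links of the form $\varphi^\up(a)\varhash\varphi^\down(b)\varhash\varphi^\updown(c)$. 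As it stands the proposal is a faithful road map of the Blair--Burke--Koytcheff proof rather than a self-contained argument.
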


The string links generated by the images of $\varphi^\up$ and
$\varphi^\down$ are called split.
Such a string link can also be characterized by the existence of a properly
embedded disk separating the two strands
in the complement. The string links generated by the image of
$\varphi^\updown$ and invertible elements
are called double cables. They can alternatively be defined as the links
whose strands are parallel.
Note that Theorem \ref{LinkStructureThm} above tells us that the center
of $\pi_0\LL$
precisely consists of the split links, double cables and their products,
and that any other string link only commutes with central elements. 

We now introduce framed $2$--string links. They are the $2$--stranded analogue
of framed long knots as
they arise by thickening the two strands.
Let $\iota \co D^2 \amalg D^2 \cof D^2 $ be the embedding that rescales the
disks to make their
radii $\tfrac{1}{8}$ and translates them so that they are centered at $\bigl(0,
\tfrac{\pm 1}{2}\bigr)$.
We refer to $\id_{\mathbb{R}} \times \iota$ as the standard embedding
$(\mathbb{R} \times D^2) \amalg (\mathbb{R} \times D^2) \cof \mathbb{R}
\times D^2$.

\begin{Defn}
\label{DefFramedStringLink}
A \emph{framed $2$--string link} is an embedding
$$(\mathbb{R} \times D^2) \amalg (\mathbb{R} \times D^2) \cof \mathbb{R}
\times D^2$$
that restricts to the standard embedding outside of $(J \times D^2) \amalg
(J \times D^2)$
and maps the interior of $(J \times D^2) \amalg (J \times D^2)$ in the
interior of $J \times D^2$.
The space of framed $2$--string links is denoted by $EC_\iota(1, D^2)$.
When $\supp_\iota(f)$ denotes the closure of
$\{(t, x)  \in (\mathbb{R} \times D^2) \amalg (\mathbb{R} \times D^2) \mid
f(t, x) \neq (t, \iota(x))\}$ for any embedding $f$,
the condition for $f$ to lie in $EC_\iota(1, D^2)$ can be reformulated as
$\supp_\iota(f) \subset (J \times D^2) \amalg (J \times D^2)$ and
$f(\inter((J \times D^2) \amalg (J \times D^2))) \subset J \times D^2$.
\end{Defn}

Framed $2$--string links again dispose of a binary concatenation
operation $\hash$
and a restriction map $EC_\iota(1, D^2) \rightarrow \LL$ preserving it.
This endows the isotopy classes $\pi_0 EC_\iota(1, D^2)$ with a monoid
structure with the standard
embedding as unit.
An obstruction for the restriction map to be a homotopy equivalence is
again the framing of each strand.
We define the framing number $\omega$ of an element of $EC_\iota(1, D^2)$
as in Definition \ref{DefFramingNumber}, except that it now consists of a 
pair of integers,
one for each strand.

\begin{Defn}
Let $f$ be a framed $2$--string link with strands $f^\up$ and $f^\down$.
We define the upper framing number $\omega_\up(f)$
as the linking number $\lk({f^\up}|_{\mathbb{R} \times (0,0)},
{f^\up}|_{\mathbb{R} \times (0, 1)})$.
The lower framing number $\omega_\down(f)$ is defined the same way and
the whole framing number
$\omega(f)$ is the pair of integers $(\omega_\up(f), \omega_\down(f))$.
\end{Defn}

\begin{figure} 
\small
\def\svgwidth{\hsize}
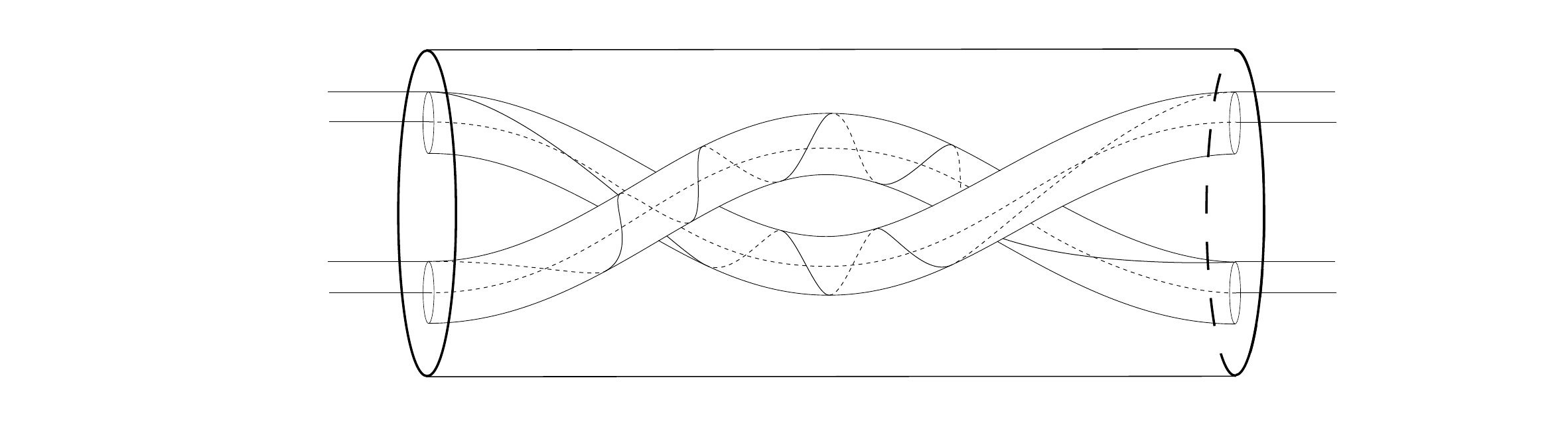
\caption{A framed $2$--string link $f$ with framing number $(3, -5)$.}
\end{figure}

The framing number is isotopy invariant and additive with respect
to the concatenation for the same
reasons as before. This makes it descend to a monoid morphism
$\pi_0 EC_\iota(1, D^2) \rightarrow \mathbb{Z}^{\times 2}$.
We are now able to get rid of this twisting 
phenomenon by defining:

\begin{Defn}
The space of \emph{fat $2$--string links} $\LLhat$ is the subspace
$$\omega^{-1}(0, 0) \subset EC_\iota(1, D^2).$$
\end{Defn}

Fat $2$--string links are stable under concatenation. We denote
by $\LLhat^0$, $\QQhat$ and $\QQhat^0$
the elements of $\LLhat$ whose restrictions to $(\mathbb{R} \times
(0,0))\amalg (\mathbb{R} \times (0,0))$ lie in
$\LL^0$, $\QQ$ and $\QQ^0$, respectively. $\LLhat$ is a good approximation
for $\LL$ in the sense that:

\begin{Prop}[Burke and Koytcheff~\cite{KoytcheffInfect}]
The restriction map $\LLhat \rightarrow \LL$ is a homotopy equivalence.
\end{Prop}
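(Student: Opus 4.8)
The plan is to reprove this by adapting Budney's argument for the analogous statement about fat long knots. I would factor the restriction map as the inclusion $\LLhat \cof EC_\iota(1, D^2)$ followed by the core map $p \colon EC_\iota(1, D^2) \to \LL$ sending $f$ to $f_{|(\bb{R}\times(0,0))\amalg(\bb{R}\times(0,0))}$, and study $p$. First I would show that $p$ is a Serre fibration (in fact a locally trivial bundle) by the parametrized isotopy extension theorem: given a family of string $2$-links $\ell_\lambda$ indexed by a CW pair, together with a framed lift $f_{\lambda_0}$ over $\ell_{\lambda_0}$, the isotopy $\ell_\lambda$ extends to a continuous family of ambient diffeomorphisms $\Phi_\lambda$ of $J\times D^2$ that are the identity near $\del(J\times D^2)$ and satisfy $\Phi_\lambda\circ \ell_{\lambda_0}=\ell_\lambda$; extending each $\Phi_\lambda$ by the identity and setting $f_\lambda := \Phi_\lambda\circ f_{\lambda_0}$ yields a continuous lift, and one checks the support and interior conditions of Definition \ref{DefFramedStringLink} persist. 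Since the framing number is unchanged under post-composition with an ambient isotopy, $f_{\lambda_0}\in\LLhat$ forces $f_\lambda\in\LLhat$, so the restriction $\hat p\colon \LLhat\to\LL$ is a Serre fibration too.

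Next I would identify the fibre $F_\ell = p^{-1}(\ell)$, the space of framings of a fixed $\ell\in\LL$. The framing number defines a locally constant map $\omega\colon F_\ell \to \bb{Z}^{\times 2}$, and the claim is that this is a homotopy equivalence onto the discrete space $\bb{Z}^{\times 2}$. Surjectivity on components follows by composing a chosen framing with powers of the twist $r$ on each strand separately. For the components themselves, since $\ell^{\up}$ and $\ell^{\down}$ are disjoint properly embedded arcs, uniqueness and contractibility of spaces of parametrized tubular neighbourhoods with the prescribed $1$-jet along $\del J\amalg\del J$ — together with a radial rescaling that makes the mutual disjointness and the containment in $\inter(J\times D^2)$ automatic — should identify each component of $F_\ell$, up to homotopy, with a component of $\Map\big((J,\del J),(SO(2),1)\big)^{\times 2}\simeq (\Omega S^1)^{\times 2}$. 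Each component of $\Omega S^1$ is contractible, so every component of $F_\ell$ is contractible; in particular the $(0,0)$-component, which is precisely the fibre of $\hat p$ over $\ell$, is contractible.

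With these two facts in hand the conclusion is formal: $\hat p\colon \LLhat\to\LL$ is a Serre fibration with contractible fibres, hence induces isomorphisms on all homotopy groups and is a weak equivalence; and since $\LLhat$ and $\LL$ are open subspaces of spaces of $C^\infty$ maps that are standard outside a compact set, they have the homotopy type of CW complexes, so $\hat p$ is in fact a homotopy equivalence.

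I expect Step 2 to be the main obstacle. Turning the slogan ``the space of framings of $\ell$ is a torsor over the gauge group of the normal data'' into an honest homotopy equivalence requires care: one must keep precise track of the prescribed boundary $1$-jets, and verify that imposing disjointness of the two fattened strands and containment in $\inter(J\times D^2)$ genuinely does not alter the homotopy type (this is where the radial shrinking deformation is used). Steps 1 and 3 are then routine.
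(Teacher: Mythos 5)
The paper does not prove this statement itself --- it is quoted from \cite{KoytcheffInfect}, where Burke and Koytcheff argue exactly as you do, following Budney's treatment of $EC(1,D^2)\rightarrow\KK$: the core-restriction map is a locally trivial fibration by parametrized isotopy extension, its fibre (the space of framings of a fixed $\ell$) is identified up to homotopy with $\Omega SO(2)^{\times 2}$ via the tubular-neighbourhood/shrinking argument with $\omega$ recording the winding numbers, and the $(0,0)$-component is contractible, so $\LLhat=\omega^{-1}(0,0)\rightarrow\LL$ is a fibration with contractible fibre. Your reconstruction is correct and matches that argument, including the two genuinely delicate points (preservation of $\omega$ under the lifted ambient isotopies, and the radial shrinking needed to impose disjointness of the two tubes without changing the homotopy type of the fibre).
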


In particular, the monoid $\pi_0\LLhat$ is completely understood
and has the same structure as
$\pi_0\LL$, 
made explicit in Theorem \ref{LinkStructureThm}. 

The remainder of this paper is dedicated to the elaboration of an algebraic
structure on the space level of
long knots and string links. The stacking products are examples of binary
operations on the space level
that relate to the monoids $\pi_0 \KK$ and $\pi_0 \LL$.
We aim to find a refinement of these operations into a more subtle structure,
in order to generalize 
Theorems~\ref{KnotStructureThm}
and~\ref{LinkStructureThm}
to the space level.
These structures will be formalized as operadic actions and the isomorphisms
described in the theorems above
will generalize as equivariant homotopy equivalences.
Budney's Theorem~11 in \cite{Budney} precisely answers this problem in
the case of knots,
and Burke and Koytcheff's Theorem~6.8 in \cite{KoytcheffInfect} partially
deals with the case of $2$--string links.
In the following sections, we recall the work presented in these two papers
and treat the case of $2$--string links in a wider manner. 

\hypertarget{sectw}{}
\section{Operads}
\label{Operads}

The purpose of this section is to recall the definition of (colored)
operads, set up some notations and introduce
the two operads of prime interest in this paper: the little cubes operad
$\cubes{n}$ and a $4$--colored version
of the Swiss-cheese operad that we call $\SCL$ for ``Swiss-cheese for
links''.
We also discuss the different types of algebras these objects encode and
review free models for these structures. 

\subsection{Colored operads and their algebras}

We start by reviewing the notion of colored operad. Let $S$ be a set
of colors.
We denote by $S^\star$ the collection of tuples of elements of $S$.
In other words, $S^\star$ is the union $\coprod_{n \geq 0} S^{\times n}$.
Each $S^{\times n}$ is naturally a right $\Sigma_n$--space. The length of
a tuple $\ult \in S^\star$ is denoted by $|\ult|$ and, for every $s \in S$, $|\ult|_s$ is the number of times $s$
appears in $\ult$.
Given two tuples of colors $\ult$, $\ulu$ and an integer $i \leq |\ult|$,
we denote by
$
\ult \circ_i \ulu$ the
$(|\ult|{+}|\ulu|{-}1)$--tuple
$$
\ult \circ_i \ulu=(t_1, \dots, t_{i-1}, u_1, \dots, u_{|\ulu|}, t_{i+1},
\dots, t_{|\ult|}).
$$
Note that $|\ult \circ_i \ulu|_s=|\ult|_s+|\ulu|_s$ whenever $s \neq
t_i$ and $|\ult|_s+|\ulu|_s-1$
for $s=t_i$. We write $s^n$ for the tuple $(s, \dots, s) \in S^{\times n}$
for every $s\in S$ and $n\geq 0$.
We are 
now in the right framework to define:

\begin{Defn}
A \emph{colored operad} $\OO$ over the colors $S$ consists of the following
combined data:
\begin{enumerate}[leftmargin=*, label=(\roman*), font=\itshape]
\item for every $\ult \in S^\star$ and $s \in S$, a space $\OO(\ult; s)$;
\item for every $\ult$, $\ulu \in S^\star$ and $i \leq |\ult|$,
operadic compositions
\[
\circ_i\co \OO(\ult; s) \times \OO(\ulu; t_i) \rightarrow \OO(\ult
\circ_i \ulu; s)
\]
satisfying the usual associativity, symmetric and unital conditions -- the
latter are thoroughly detailed in \cite{Yau};
\item for every permutation $\sigma \in \Sigma_n$, $\ult$ an $n$--tuple
of colors and $s$ a color, a map
\[
\sigma^\ast\co \OO(\ult; s) \rightarrow \OO(\ult\sigma; s)
\]
such that $\tau^\ast \circ \sigma^\ast = (\sigma \circ \tau)^\ast$ and
$\id^\ast=\id$;
\item  for every color $s$, a unit $1_s \in \OO(s; s)$.
\end{enumerate}
\unskip
The elements of $\OO(\ult; s)$ are called operations with inputs $\ult$
and output $s$.
The units~$1_s$ are sometimes 
referred to as
identities. 
We also often write $a\sigma$ for $\sigma^\ast(a)$ and $a \circ_i b$
for $\circ_i(a, b)$.
A morphism of colored operads $f\co \OO \rightarrow \PP$ is a collection
of maps
$f_{(\ult; s)} \co \OO(\ult; s) \rightarrow \PP(\ult; s)$ preserving
operadic compositions,
symmetric actions and units.
\end{Defn}

\begin{Ex}
The prototypical examples of colored operads are the endomorphism operads.
Let $X=(X_s)_{s \in S}$ be an $S$--tuple of spaces. Given a vector of
colors $\ult$,
we write $X^{\times \ult}$ for the product $\prod_i X_{t_i}$. Now, for
any output color $s$,
we set $\EE_X(\ult; s)$ to be the mapping space $\Top(X^{\times \ult},
X_s)$.
An element $\sigma \in \Sigma_{|\ult|}$ can act on an $f \in \EE_X(\ult;
s)$ by permuting its entries,
resulting in an element of $\EE_X(\ult\sigma; s)$. Also, when $\ulu$
is another vector of colors and
$g$ lies in $\EE_X(\ulu; t_i)$, one can inject $g$ into the $i^{\text{th}}$
entry of $f$ to get the composite
$f \circ_i g \in \EE_X(\ult \circ_i \ulu; s)$. This specifies the data
of a colored operad $\EE_X$ on the colors $S$.
\end{Ex}

\begin{Defn}
A colored operad $\OO$ over a single color $s$ is called an \emph{operad}. In
this case, we write $\OO(n)$ for
$\OO(s^n; s)$ and call it the space in arity $n$. The unit $1_s$ is simply
denoted by $1$.
The operadic compositions are now maps $\OO(n) \times \OO(m) \rightarrow
\OO(n+m-1)$ and
the symmetric structure turns each $\OO(n)$ into a right $\Sigma_n$--space.
\end{Defn}

Operads are useful to specify categories of algebraic objects.
This is formalized via operadic actions which we define now.

\begin{Defn}
Let $\OO$ be a colored operad over the set of colors $S$ and $X=(X_s)_{s
\in S}$
an $S$--tuple of spaces. We say that $X$ is an \emph{$\OO$--algebra} if it
comes with a morphism of operads
$\kappa\co \OO \rightarrow \EE_X$. In other words, an \emph{$\OO$--algebra
structure} on $X$ is a collection of
maps
\[
\kappa_{(\ult; s)} \co \OO(\ult; s) \rightarrow \Top(X^{\times \ult},
X_s)
\]
preserving the operadic compositions, identities and symmetric actions
described in~\cite{Yau}.
They may also be thought of as maps $\OO(\ult; s) \times X^{\times \ult}
\rightarrow X_s$,
and we shall use each framework when it is more convenient.
A \emph{morphism of $\OO$--algebras} $f\co X \rightarrow Y$ is a collection
of maps $f_s\co X_s \rightarrow Y_s$ preserving the operadic actions.
\end{Defn}

\begin{Ex}
\label{Comm}
Consider the operad obtained with the one point space in every arity and
trivial symmetric actions and
operadic compositions. An action of this operad on a space $X$ is the data
of a single map
$X^{\times n} \rightarrow X$ for every nonnegative $n$.
One readily checks that the required relations listed in \cite{Yau}
correspond to the associativity and
commutativity of $X^{\times 2} \rightarrow X$, as well as the fact that the
element specified by the zeroth map
$X^{\times 0} \rightarrow X$ acts as a unit. In other words, an action of
this operad on $X$ is a
commutative topological monoid structure on $X$. This justifies the
terminology $\Com$ for this operad.
\end{Ex}

\begin{Ex}
\label{Ass}
Consider the operad whose space in arity $n$ is the discrete symmetric
group $\Sigma_n$ as an evident
right $\Sigma_n$--space with the following operadic composition. For every
$\sigma \in \Sigma_n$,
$\tau \in \Sigma_m$ and $i \leq n$, $\sigma \circ_i \tau$ permutes $\{1,
\dots, n+m-1\}$ according to $\sigma$
while treating $\{i, \dots, i+m-1\}$ as a single block, then shuffles the
latter internally according to $\tau$.
An action of this operad on a space $X$ is a data of a map $X^{\times n}
\rightarrow X$ for every
ordering of $\{1, \dots, n\}$. One readily checks that the required
relations correspond to the associativity of
$X^{\times 2} \rightarrow X$ and the fact that the element $X^{\times 0}
\rightarrow X$ acts as a unit.
In other words, the algebras over this operad are the 
not necessarily
commutative topological monoids.
This operad is called the associative operad and is denoted by~$\As$. 
\end{Ex}

\subsection{Free algebras}
\label{OperadsFreeAlg}

Before we introduce the two operads that will act on the spaces of knots
and links, we take some time to discuss
free algebras. When $\OO$ is a colored operad over the set of colors $S$,
the $\OO$--algebras and their morphisms form a category denoted by $\Alg{\OO}$.
There is a forgetful functor $\mathcal{U}\co \Alg{\OO} \rightarrow \Top^{\times
S}$ mapping an
$\OO$--algebra to its underlying $S$--tuple of spaces.
By free $\OO$--algebra, we understand the left adjoint
$\OO[\underscore]$ to the forgetful functor $\mathcal{U}$. In other words,
the free $\OO$--algebra generated by
$X=(X_{s})_{s\in S}$ should lead to a bijection
\[
\Alg{\OO}(\OO[X], Y) \cong \Top^{\times S}(X, \mathcal{U}(Y))
\]
for every $\OO$--algebra $Y$. A well-known model for $\OO[X]$ is obtained
as follows.
For every vector $\mathbold{x}=(x_1, \dots, x_n) \in X^{\times \ult}$
and permutation $\sigma$,
we 
will write $\sigma \mathbold{x}$ for $(x_{\sigma^{-1}(1)}, \dots,
x_{\sigma^{-1}(n)}) \in X^{\times \ult\sigma^{-1}}$. We set
\[
\OO[X]_s= \faktor{\coprod_{\ult} \OO(\ult; s) \times X^{\times
\ult}}{\sim},
\]
where $\sim$ identifies each $(a, \mathbold{x})$ with $(a\sigma,
\sigma^{-1}\mathbold{x})$ for every permutation $\sigma$.
The action of $\OO$ is obtained by composing in the $\OO(\ult;s)$ factor.
The desired bijection above is then a formal verification.
When $\OO$ is an uncolored operad, $\sim$ corresponds to the
$\Sigma_n$--orbits and we can simplify
\[
\OO[X]=\coprod_n \OO(n) \times_{\Sigma_n} X^{\times n}.
\]
We conclude this paragraph with a quick observation about free algebras.
When $\OO$ is a colored operad with set of colors $S$, the components
$\pi_0\OO$ naturally inherit an
operad structure. Similarly, if $X=(X_{s})_{s \in S}$ is an $\OO$--algebra,
then the components $\pi_0 X=(\pi_0 X_{s})_{s \in S}$ inherit a
$\pi_0\OO$--algebra structure. Finally, the following result will come in
handy when proving
that some actions yield free algebras in Section \ref{FreenessResults}.

\begin{Prop}
\label{pizeFreenessCommute}
Let $X$ be an $S$--tuple of spaces. Then, $\pi_0(\OO[X])$ is a model for
the free algebra $\pi_0\OO[\pi_0X]$.
\end{Prop}

\begin{proof}
We have the two models
\[
\pi_0(\OO[X]_s)=\pi_0\biggl(\faktor{\coprod_{\ult} \OO(\ult; s)
\times X^{\times \ult}}{\sim}\biggr), \quad
\pi_0\OO[\pi_0X]_s=\faktor{\coprod_{\ult} \pi_0\OO(\ult; s)
\times \pi_0X^{\times \ult}}{\sim}.
\]
Since $\pi_0$ commutes with products and coproducts, the right-hand side
is equal to the quotient of
$\pi_0\bigl(\coprod_{\ult} \OO(\ult; s) \times X^{\times \ult}\bigr)$ by the
relations
$[a, \mathbold{x}] \sim [a\sigma, \sigma^{-1} \mathbold{x}]$. The left-hand
side also matches this description so both spaces are
the same.
It is a tautological verification to see that the action of $\pi_0\OO$
is the same under these identifications. 
\end{proof}

\subsection{The little cubes operad}

We introduce the little cubes operad $\cubes{n}$ and quickly discuss the
algebras it encodes.
It is an uncolored operad originated in \cite{May} in order to understand
iterated loop spaces.
Our treatment is very similar to the one of Budney in \cite{Budney}.

\begin{Defn}
The real functions of the form $x \mapsto ax+b$ for some positive $a$
are said to be affine increasing.
A \emph{little $n$--cube} is an application $L \co J^n \rightarrow J^n$
of the form
$L=l^1 \times \cdots \times l^n$ for some affine increasing functions $l^i$.
The space of $k$ \emph{overlapping little $n$--cubes} $\overcubes{n}(k)$
is the set of configurations of
$k$ little $n$--cubes $J^n \amalg \cdots \amalg J^n \rightarrow J^n$.
We set $\overcubes{n}(0)$ to be the one point space. Given an element $L
\in \overcubes{n}(k)$,
we write its decomposition in little $n$--cubes $L=\bigoplus_i L^i$.
Each $L^i$ decomposes uniquely in affine increasing functions $l^{i,1}
\times \cdots \times l^{i,n}$,
so writing $l^{i, j} \co x \mapsto a_{i, j}x + b_{i, j}$ gives rise to
an injection
$\overcubes{n}(k) \cof \mathbb{R}^{2nk}\co L \mapsto (a_{1, 1}, b_{1, 1},
\cdots, a_{k, n}, b_{k, n})$.
This is used to transfer a topology on $\overcubes{n}(k)$.
Considering the $C^{\infty}$--topology actually has the same outcome.
\end{Defn}

An element of $\overcubes{n}(k)$ is represented by a drawing of
the images of its little $n$--cubes.
We now equip the family of spaces $\overcubes{n}(k)$ with an operadic
structure.
Thereafter, we define the little $n$--cubes operad as a suboperad by adding
a disjointness conditions on the cubes.

\begin{Defn}
\label{OverLittleCubesOperad}
The \emph{overlapping little $n$--cubes operad} $\overcubes{n}$ is the
operad specified as follows.
\begin{enumerate}[leftmargin=*, label=(\roman*), font=\itshape]
\item The space in arity $k$ is the set of configurations  of little
$n$--cubes $\overcubes{n}(k)$.
\item For every positive integer $k$, $l$ and $i \leq k$, the operadic
composition is given by
\[
\begin{gathered}
\circ_i \co \overcubes{n}(k) \times \overcubes{n}(l) \rightarrow
\overcubes{n}(l+k-1); \\
(L, P)
\mapsto
L^1 \oplus \cdots \oplus L^{i-1} \oplus L^i \circ P^{1} \oplus \cdots
\oplus L^i \circ P^{l} \oplus L^{i+1} \oplus \cdots \oplus L^k.
\end{gathered}
\]
Composing $L \in \overcubes{n}(k)$ with the one point in $\overcubes{n}(0)$
discards the $i^{\text{th}}$ cube of $L$.
\item The action of $\sigma \in \Sigma_l$ on $L \in \overcubes{n}(l)$
permutes the little $n$--cubes of $L$,
ie 
$$L\sigma= \bigoplus_i L^{\sigma(i)} . $$
\item The unit is the identity little $n$--cube $\id_{J^n} \in
\overcubes{n}(1)$.
\end{enumerate}
\end{Defn}

\begin{figure} 
\small
\def\svgwidth{\hsize}
\input{operadicCompositionC2Infinity.pdf_tex}
\caption{Illustration of the operadic composition $\circ_2\co \overcubes{2}(4)
\times \overcubes{2}(2) \rightarrow \overcubes{2}(5)$.}
\end{figure}

\begin{Defn}
\label{LittleCubesOperad}
Two little $n$--cubes are said to be \emph{almost disjoint} if the interiors
of their images are disjoint.
The space of $k$ little $n$--cubes $\cubes{n}(k)$ is the subspace of
$\overcubes{n}(k)$ consisting of
pairwise almost disjoint little $n$--cubes. We still set $\cubes{n}(0)$
to be the one point space.
Operadic compositions in the overlapping little cubes operad preserve the
property of being almost disjoint so
the subspaces $\cubes{n}(k)$ forms a suboperad $\cubes{n}$ called the
little $n$--cubes operad.
\end{Defn}

\begin{Rem}
Budney defines in this article \cite{BudneySplicing} an operad
$\cubes{n}^\prime$ which he calls
``the operad of overlapping little $n$--cubes''. The resemblance with our
terminology for $\overcubes{n}$
is only coincidental. The two objects are not equivalent
(Budney's $\cubes{n}^\prime$ is equivalent to $\cubes{n+1}$,
while $\overcubes{n}$ has contractible underlying spaces).
The operad $\cubes{n}^\prime$ does not appear in this article, so there
should be no confusion.
\end{Rem}

We conclude this paragraph by taking a look at the operad $\pi_0 \cubes{n}$.
Given $k$ little $n$--cubes $L \in \cubes{n}(k)$, restricting $L$ to the
center of each cube
leads to an injective map $\{1, \dots, k\} \cof J^n$, ie an element of
the configuration space $\conf_k(J^n)$.
Conversely, given $k$ points $\ulx$ in $\conf_k(J^n)$, one gets an element
of $\cubes{n}(k)$ by
considering the identical cubes centered at $\ulx$ whose size is the
maximal size that keeps them almost disjoint.
Intuitive straight line homotopies show that these two maps are homotopy
inverses, so that the homotopy type
of $\cubes{n}(k)$ is the one of $\conf_k(J^n)$.
In particular, when $n>1$, each $\cubes{n}(k)$ is path connected
so $\pi_0\cubes{n}=\Com$ from Example \ref{Comm}.
The free algebras over this operad are the free commutative monoids.
In dimension $1$, the isotopy classes of $\cubes{1}(k)$ are indexed by
the orderings of $\{1, \dots, n\}$,
so $\pi_0\cubes{1}=\As$ from Example \ref{Ass}.
The free $\As$--algebras are the free monoids. 

\subsection{The operad $\SCL$} 

We go through the construction of the Swiss-cheese operad for links $\SCL$.
It is a $4$--colored operad that is also defined in terms of little cubes.
This terminology is motivated by the fact that $\SCL$ restricts to the
$2$--colored Swiss-cheese operad on
several pairs of colors. We then conclude by investigating the operad of
components $\pi_0\SCL$. 

\begin{Defn}
A little $n$--cube $L=l^1\times \cdots \times l^n$ is said to \emph{meet
the lower face of the unit cube}
if $l^n(-1)=-1$. Visually, this happens when the image of $L$ intersects
$J^{n-1} \times -1 \subset J^n$.
The configurations of $k$ almost disjoint little $n$--cubes meeting the
lower face of $J^n$ is
denoted by $\cubeso{n}(k)$ and these spaces form an operad $\cubeso{n}$
just as in the case of $\cubes{n}$.
\end{Defn}

Consider the set of four colors $S=\{o, \up, \down, \updown\}$.
The notation $o$ is meant to remind one of the ``open'' color in the 
Swiss-cheese operad as it will play a similar
role. The other symbols call up to the upper and lower strands of a
string link.

\begin{Defn}
The Swiss-cheese operad for links $\SCL$ is specified as follows.
\begin{enumerate}[leftmargin=*, label=(\roman*), font=\itshape]
\item For $s \in \{\up, \down, \updown\}$, the only inputs $\ult$ that
do not lead to an empty $\SCL(\ult; s)$
are the monochromatic ones such that $\ult=s^n$. In that case, we set
$\SCL(s^n; s)=\cubes{2}(n)$.
When $s=o$, we set
\(\SCL(\ult; o)\) to be those \(L \in \overcubes{2}(|\ult|)\) such that 
each $L^i$ with $t_i=o$ meets the lower face of $J^n$ 
while being almost disjoint from any other cube, 
the $L^i$ with $t_i= \up$ 
are almost disjoint from each other and
the same holds for $t_i = \down$ and $\updown$).
\item The operadic compositions, symmetric actions and units are inherited
from $\overcubes{2}$.
\end{enumerate}
\end{Defn}

An element of $\SCL(\ult; s)$ is represented by a drawing of
the images of its little $n$--cubes.
We decorate the numbering of each little cube with its associated color
to distinguish the cubes that simply
happen to meet the lower face of $J^n$ from those that have to.
The output color also appears as an index of the whole drawing.
Figure \ref{exampleSCL} gives an example.
It is immediate from its definition that $\SCL$ restricts to the (cubic)
Swiss-cheese operad on the pairs of colors
$\{o, \up\}$, $\{o, \down\}$ and $\{o, \updown\}$. It also clearly restricts
to the little cubes operad $\cubes{2}$ on
$\up$, $\down$ and $\updown$.
 Originally, the $2$--dimensional Swiss-cheese operad is a $2$--colored
 operad on the set of colors
$\{o,c\}$. In some sense, the color $o$ encodes a homotopy associative
algebra and the color $c$ describes
part of its center. In the case of $2$--string links, the center of
$\pi_0\LL^0$ decomposes as
$\pi_0\KK^{\times 3}$. To encode this extra information, we split the
color $c$ into three independent colors
$\{\up,\down,\updown\}$.

\begin{figure} 
\small
\def\svgwidth{\hsize}
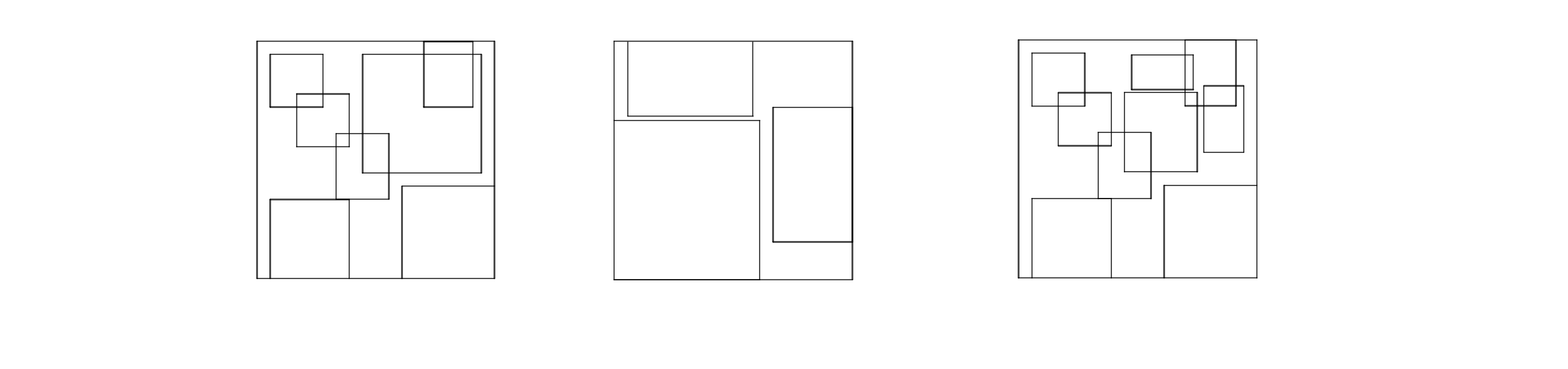
\caption{Illustration of the operadic composition in $\SCL$, more precisely~of 
$
\circ_{3} \co \SCL(\updown,o, \up, \updown, \down, o, \up; o)
\times \SCL(\up,\up,\up;\up) \rightarrow 
\SCL(\updown,o,\up,\up,\up,\updown,\down,o,\up;o)$.} 
\label{exampleSCL}
\end{figure}

We now investigate $\pi_0 \SCL$ and its algebras. Observe that for every $k$ and $n>1$, 
the
projection map
$\cubeso{n}(k) \rightarrow \cubes{n-1}(k)$ is a homotopy equivalence. 
A homotopy inverse is obtained by inflating $(n{-}1)$--cubes into $n$--cubes
of some fixed height.
This reasoning can readily be adapted to show that $\SCL(\ult; o)$ is
homotopy equivalent to the product
\[
\cubes{1}(|\ult|_o) \times \cubes{2}(|\ult|_\up) \times
\cubes{2}(|\ult|_\down) \times \cubes{2}(|\ult|_\updown).
\]
In particular, $\pi_0\SCL(\ult;s)$ is either
\begin{itemize}
\item empty if $s \neq o$ and $\ult \neq s^n$,
\item a single point if $s \neq o$ and $\ult = s^n$,
\item the discrete space $\Sigma_{|\ult|_o}$ when $s=o$.
\end{itemize}
\unskip
Therefore, an action of $\pi_0\SCL$ on	$X=(X_o, X_\up, X_\down, X_\updown)$
is the data of a
monoid structure on each space, such that the $X_s$ are commutative for
$s \neq o$ and act on $X_o$ with
compatible actions. With this description, it is easy to see with the
universal property of free objects that the free
such quadruplet on the basis $(A, B, C, D)$ is given by
\[
\displaylines{
\pi_0\SCL[A, B, C, D]
\hfill \cr \hfill
=\bigl(\As[A] \times \Com[B] \times \Com[C] \times \Com[D]
, \Com[B], \Com[C], \Com[D]\bigr),
}
\]
where the last three monoids act on the first one on their respective
factor. 

\hypertarget{secth}{}
\section{Operadic actions}
\label{OperadicActions}

We gather here the objects introduced in the previous sections and endow
the spaces $\KKhat$ and $\LLhat$ with
operadic actions. In a first paragraph, the fat long knots $\KKhat$ are
equipped with a $\cubes{2}$--algebra structure
originally exhibited by Budney in \cite{Budney}. In the case of fat
$2$--string links, there is a $\cubes{1}$--algebra structure
that follows from Burke and Koytcheff's work in \cite{KoytcheffInfect}. We
recall its construction in a second paragraph and
extend it to an action of $\SCL$ in a third one. 

\subsection{Budney's action on fat long knots}

We start with the little $2$--cubes action on $\KKhat$ originated in
\cite{Budney}.
Following Budney's work, we first define an action of the affine increasing
automorphisms of
$\mathbb{R}$ on the self-embeddings of $\mathbb{R} \times D^2$, then
proceed to extend it to the $2$--dimensional little cubes operad $\cubes{2}$.
We denote by $\CAut_1$ the group of real affine increasing functions.
A little $1$--cube is identified with its natural extension to the real
line so that $\cubes{1}(1)$ lives in $\CAut_1$.
We topologize $\CAut_1$ as we topologized $\cubes{1}(1)$, which coincides
with the $C^{\infty}$--topology and
turns it into a topological group.

\begin{Prop}[Budney~\cite{Budney}]
\label{actionCAuton}
The topological group $\CAut_1$ acts on the space of embeddings
$\Emb(\mathbb{R} \times D^2, \mathbb{R} \times D^2)$ via
\[
\begin{gathered}
\CAut_1 \times \Emb(\mathbb{R} \times D^2, \mathbb{R} \times D^2)
\rightarrow \Emb(\mathbb{R} \times D^2, \mathbb{R} \times D^2); \\
(L, f) \mapsto (L \times \id_{D^2}) \circ f \circ (L^{-1} \times
\id_{D^2}).
\end{gathered}
\]
Moreover, this restricts to an action of $\cubes{1}(1)$ on $EC(1, D^2)$
that we note $(L, f) \mapsto Lf$.
\end{Prop}

\begin{proof}
That this map defines a valid action of 
a topological group is immediate.
To prove the statement about the restriction, we just need to check that
$Lf$ restricts to the identity outside of
$J \times D^2$, provided $L \in \cubes{1}(1)$ and $f \in EC(1, D^2)$. For
any $t \notin J$,
$L^{-1}(t)$ does not lie in $J$ because $L(J) \subset J$. So, for every
$x \in D^2$, $(L^{-1} \times \id_{D^2})(t, x)$
is outside of $J \times D^2$, where $f$ restricts to the identity.
Thus $Lf(t, x)=(t, x)$, which proves the second part of the proposition. 
\end{proof}

\begin{figure}
\small
\def\svgwidth{\hsize}
%% Creator: Inkscape 1.1 (c68e22c387, 2021-05-23), www.inkscape.org
%% PDF/EPS/PS + LaTeX output extension by Johan Engelen, 2010
%% Accompanies image file '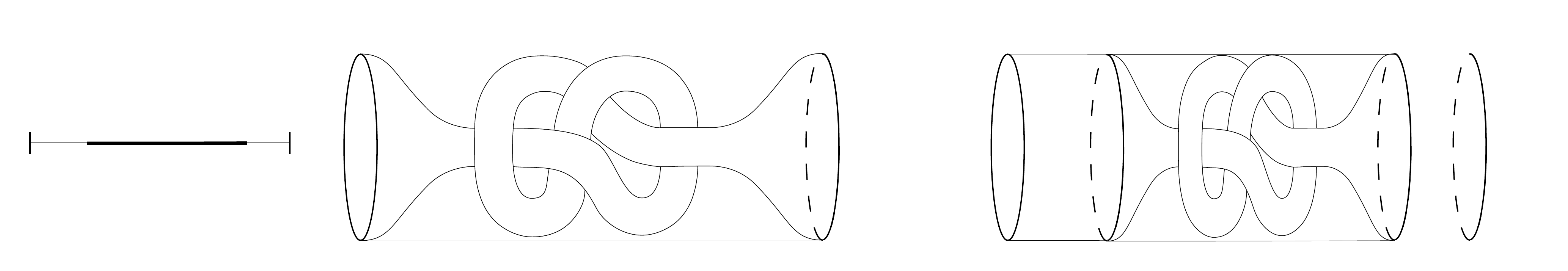' (pdf, eps, ps)
%%
%% To include the image in your LaTeX document, write
%%   \input{<filename>.pdf_tex}
%%  instead of
%%   \includegraphics{<filename>.pdf}
%% To scale the image, write
%%   \def\svgwidth{<desired width>}
%%   \input{<filename>.pdf_tex}
%%  instead of
%%   \includegraphics[width=<desired width>]{<filename>.pdf}
%%
%% Images with a different path to the parent latex file can
%% be accessed with the `import' package (which may need to be
%% installed) using
%%   \usepackage{import}
%% in the preamble, and then including the image with
%%   \import{<path to file>}{<filename>.pdf_tex}
%% Alternatively, one can specify
%%   \graphicspath{{<path to file>/}}
%% 
%% For more information, please see info/svg-inkscape on CTAN:
%%   http://tug.ctan.org/tex-archive/info/svg-inkscape
%%
\begingroup%
  \makeatletter%
  \providecommand\color[2][]{%
    \errmessage{(Inkscape) Color is used for the text in Inkscape, but the package 'color.sty' is not loaded}%
    \renewcommand\color[2][]{}%
  }%
  \providecommand\transparent[1]{%
    \errmessage{(Inkscape) Transparency is used (non-zero) for the text in Inkscape, but the package 'transparent.sty' is not loaded}%
    \renewcommand\transparent[1]{}%
  }%
  \providecommand\rotatebox[2]{#2}%
  \newcommand*\fsize{\dimexpr\f@size pt\relax}%
  \newcommand*\lineheight[1]{\fontsize{\fsize}{#1\fsize}\selectfont}%
  \ifx\svgwidth\undefined%
    \setlength{\unitlength}{1984.2519685bp}%
    \ifx\svgscale\undefined%
      \relax%
    \else%
      \setlength{\unitlength}{\unitlength * \real{\svgscale}}%
    \fi%
  \else%
    \setlength{\unitlength}{\svgwidth}%
  \fi%
  \global\let\svgwidth\undefined%
  \global\let\svgscale\undefined%
  \makeatother%
  \begin{picture}(1,0.17285714)%
    \lineheight{1}%
    \setlength\tabcolsep{0pt}%
    \put(0,0){\includegraphics[width=\unitlength,page=1]{CAut1Knot.pdf}}%
    \put(0.56486529,0.07959863){\color[rgb]{0,0,0}\makebox(0,0)[lt]{\lineheight{1.25}\smash{\begin{tabular}[t]{l}$\longmapsto$\end{tabular}}}}%
    \put(0.10561352,0.09185617){\color[rgb]{0,0,0}\makebox(0,0)[lt]{\lineheight{1.25}\smash{\begin{tabular}[t]{l}$1$\end{tabular}}}}%
    \put(0,0){\includegraphics[width=\unitlength,page=2]{CAut1Knot.pdf}}%
  \end{picture}%
\endgroup%

\caption{Illustration of the action of $\cubes{1}(1)$ on $EC(1, D^2)$.}
\label{CAutonKnot}
\end{figure}

\begin{Defn}[Budney~\cite{Budney}]
We define two operations and a partial order on little $2$--cubes.
\begin{enumerate}[leftmargin=*, label=(\roman*), font=\itshape]
\item Given a little $2$--cube $L = l^1 \times l^2$, we write $L_{\pi}$
for the little $1$--cube $l^1$.
When $L \in \cubes{2}(k)$, $L_{\pi}$ denotes $\bigoplus_i L^i_{\pi}$.
These little $1$--cubes may overlap so $L_{\pi}$ lies in $\overcubes{1}(k)$
but not necessarily in $\cubes{1}(k)$.
\item For every $L=l^1 \times l^2 \in \cubes{2}(1)$, let $L_t$ denote the
number $l^{2}(-1)$.
Again, if $L \in \cubes{2}(k)$, 
then $L_t$ is the $k$--tuple of reals $(L^1_t,
\dots, L^k_t) \in J^k$.
\item We define a partial order on the little cubes $L^i$ of an element
$L \in \cubes{2}(k)$.
This binary relation is the order generated by setting $L^i < L^j$ if
and only if $L^i_t < L^j_t$ and the interiors of $L^i_\pi$ and $L^j_\pi$
intersect.
Then, a permutation $\sigma \in \Sigma_k$ is said to \emph{order} $L$
if the mapping $i \mapsto L^{\sigma(i)}$
is nondecreasing.
\end{enumerate}
\end{Defn}

\begin{figure}[b]
\small
\def\svgwidth{\hsize}
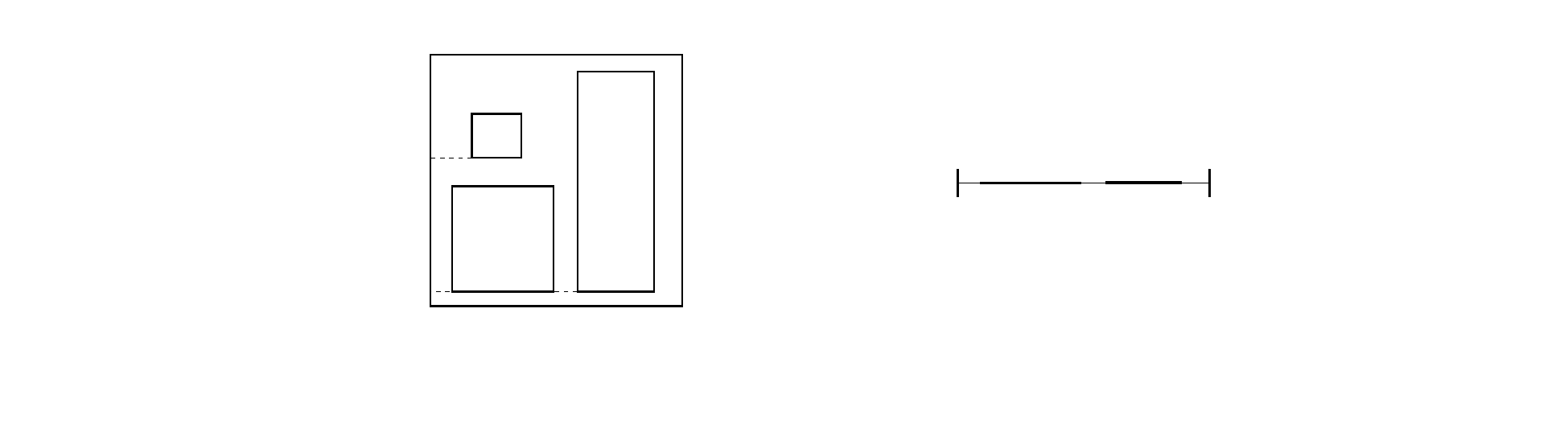
\caption{Illustration of the operations $(\underscore)_\pi$ and $(\underscore)_t$. The permutations $\id$, $(12)$ and $(23)$ order $L$.}
\end{figure}

We are now ready to define an action of $\cubes{2}$ on $EC(1,
D^2)$, ie a map  of operads
$\kappa \co \cubes{2} \rightarrow \EE_{EC(1, D^2)}$.
For every element $L \in \cubes{2}(k)$ and permutation $\sigma \in \Sigma_k$
that orders~$L$, we set
\[
\begin{gathered}
\kappa_k(L) \co EC(1, D^2)^{\times k} \rightarrow EC(1, D^2); \\
\mathbold{f}=(f_i)_i \mapsto (L^{\sigma(1)}_\pi f_{\sigma(1)})\circ
\cdots \circ (L^{\sigma(k)}_\pi f_{\sigma(k)}).
\end{gathered}
\]
One can be assured that $\kappa_k(L)$ does not depend on $\sigma$ as follows.
Two choices for $\sigma$ differ by a sequence of transpositions $(ab)$
such that $L^a$ and $L^b$
are incomparable, ie such that $L^a_\pi$ and $L^b_\pi$ are almost disjoint.
\begin{figure}
\small
\def\svgwidth{\hsize}
%% Creator: Inkscape 1.1 (c68e22c387, 2021-05-23), www.inkscape.org
%% PDF/EPS/PS + LaTeX output extension by Johan Engelen, 2010
%% Accompanies image file '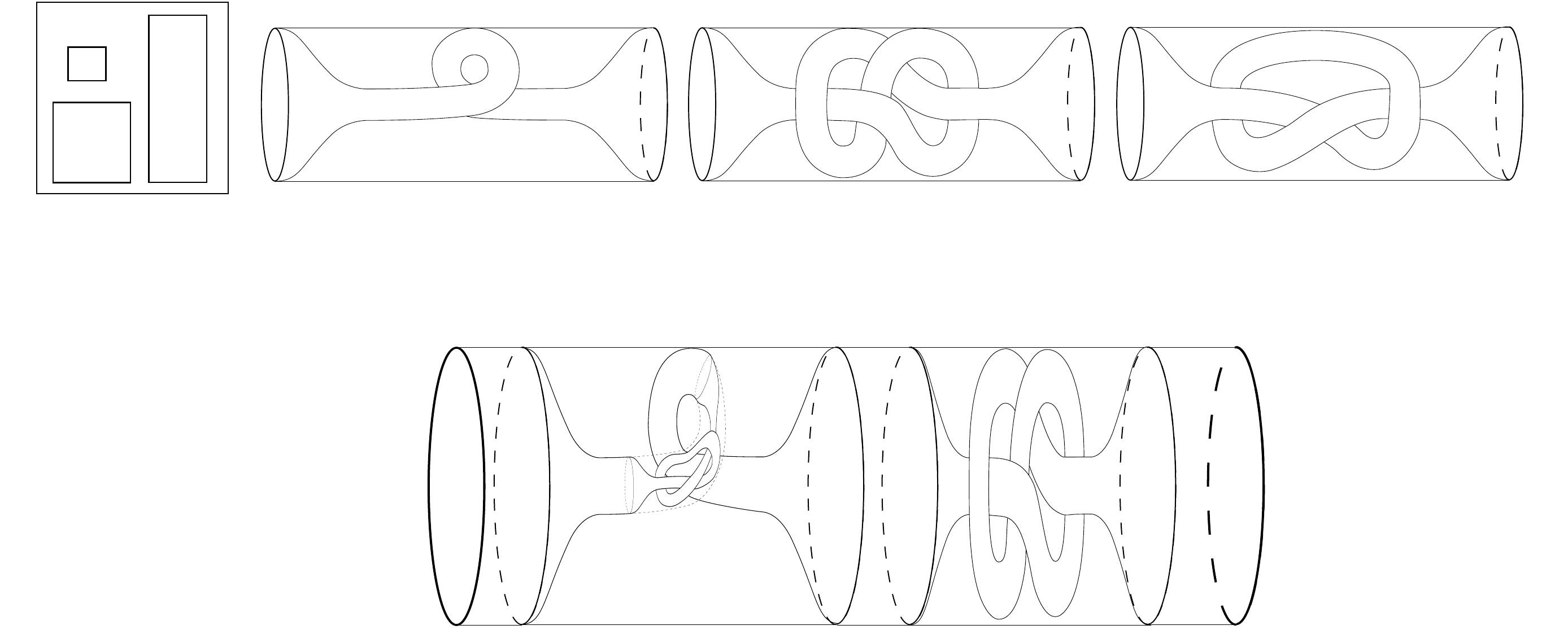' (pdf, eps, ps)
%%
%% To include the image in your LaTeX document, write
%%   \input{<filename>.pdf_tex}
%%  instead of
%%   \includegraphics{<filename>.pdf}
%% To scale the image, write
%%   \def\svgwidth{<desired width>}
%%   \input{<filename>.pdf_tex}
%%  instead of
%%   \includegraphics[width=<desired width>]{<filename>.pdf}
%%
%% Images with a different path to the parent latex file can
%% be accessed with the `import' package (which may need to be
%% installed) using
%%   \usepackage{import}
%% in the preamble, and then including the image with
%%   \import{<path to file>}{<filename>.pdf_tex}
%% Alternatively, one can specify
%%   \graphicspath{{<path to file>/}}
%% 
%% For more information, please see info/svg-inkscape on CTAN:
%%   http://tug.ctan.org/tex-archive/info/svg-inkscape
%%
\begingroup%
  \makeatletter%
  \providecommand\color[2][]{%
    \errmessage{(Inkscape) Color is used for the text in Inkscape, but the package 'color.sty' is not loaded}%
    \renewcommand\color[2][]{}%
  }%
  \providecommand\transparent[1]{%
    \errmessage{(Inkscape) Transparency is used (non-zero) for the text in Inkscape, but the package 'transparent.sty' is not loaded}%
    \renewcommand\transparent[1]{}%
  }%
  \providecommand\rotatebox[2]{#2}%
  \newcommand*\fsize{\dimexpr\f@size pt\relax}%
  \newcommand*\lineheight[1]{\fontsize{\fsize}{#1\fsize}\selectfont}%
  \ifx\svgwidth\undefined%
    \setlength{\unitlength}{1332.28346457bp}%
    \ifx\svgscale\undefined%
      \relax%
    \else%
      \setlength{\unitlength}{\unitlength * \real{\svgscale}}%
    \fi%
  \else%
    \setlength{\unitlength}{\svgwidth}%
  \fi%
  \global\let\svgwidth\undefined%
  \global\let\svgscale\undefined%
  \makeatother%
  \begin{picture}(1,0.40425532)%
    \lineheight{1}%
    \setlength\tabcolsep{0pt}%
    \put(0,0){\includegraphics[width=\unitlength,page=1]{exampleBudney.pdf}}%
    \put(0.58009144,0.23157485){\color[rgb]{0,0,0}\makebox(0,0)[lt]{\lineheight{1.25}\smash{\begin{tabular}[t]{l}$\kappa_3$\end{tabular}}}}%
    \put(0.0526352,0.30849291){\color[rgb]{0,0,0}\makebox(0,0)[lt]{\lineheight{1.25}\smash{\begin{tabular}[t]{l}$1$\end{tabular}}}}%
    \put(0.1080851,0.33213654){\color[rgb]{0,0,0}\makebox(0,0)[lt]{\lineheight{1.25}\smash{\begin{tabular}[t]{l}$2$\end{tabular}}}}%
    \put(0.04935705,0.35432863){\color[rgb]{0,0,0}\makebox(0,0)[lt]{\lineheight{1.25}\smash{\begin{tabular}[t]{l}$3$\end{tabular}}}}%
    \put(0.56138377,0.2314643){\color[rgb]{0,0,0}\makebox(0,0)[lt]{\lineheight{1.25}\smash{\begin{tabular}[t]{l}$\longdownmapsto$\end{tabular}}}}%
  \end{picture}%
\endgroup%

\caption{Illustration of Budney's action on fat long knots.}
\end{figure}
Then, $\supp(L^a_\pi f_a)$ and $\supp(L^b_\pi f_b)$ are almost disjoint as well
so both orders of composition yield the same outcome.
For the continuity, consider for every $\tau \in \Sigma_k$ the map
\[
\begin{gathered}
\kappa_k^{\tau} \co \cubes{2}(j) \times  EC(1, D^2)^{\times k} \rightarrow
EC(1, D^2); \\
(L, \mathbold{f}) \mapsto (L^{\tau(1)}_\pi f_{\tau(1)}) \circ \cdots
\circ (L^{\tau(k)}_\pi f_{\tau(k)}).
\end{gathered}
\]
Each $\kappa_k^\tau$ is continuous and coincides with $\kappa_k$ on
$$F_\tau=\{L \in \cubes{2}(k) \mid \tau \text{ orders } L\} \times EC(1,
D^2)^{\times k}.$$
The sets $F_{\tau}$ are closed and cover $\cubes{2}(k) \times EC(1,
D^2)^{\times k}$ so $\kappa_k$ is continuous.
In arity $0$, we set $\kappa_0$ to be the map sending the single point in
$\cubes{2}(0) \times EC(1, D^2)^{\times 0}$ to $\id_{\mathbb{R} \times D^2}$.

\begin{Thm}[Budney~\cite{Budney}]
\label{CtwActsOnEC-on-Dtw-}
The operations $\kappa$ turn $EC(1, D^2)$ into a $\cubes{2}$--algebra.
\end{Thm}

Even though a proof of this result is readily available in
\cite{Budney}, we provide one here as the
methods and ideas at stake will be reused before the end of this section.

\begin{proof}
The operation $\kappa_1$ clearly maps the basepoint $\id_{J^2} \in
\cubes{2}(1)$ to the identity.
We need to check the compatibility of $\kappa$ with  the symmetric group
actions and the operadic compositions.
We start with the symmetric structure. Recall that a permutation $\tau$
acting on the right of $L \in \cubes{2}(k)$
yields $\bigoplus_i L^{\tau(i)}$. It also acts on the left of
$$\mathbold{f}=(f_i)_i \in EC(1, D^2)^{\times k}$$
to give
$\tau \mathbold{f}=(f_{\tau^{-1}(i)})_i$. Thus, if $\sigma$ is a permutation
that orders $L\tau$, then $\tau \circ \sigma$ orders~$L$. 
This proves the needed equality
\[
\kappa_k(L\tau, \mathbold{f})
=(L^{\tau \circ \sigma(1)}_\pi f_{\sigma(1)}) \circ \cdots \circ (L^{\tau
\circ \sigma(k)}_\pi f_{\sigma(k)})
=\kappa_k(L, \tau\mathbold{f}).
\]
We are left to prove that $\kappa$ preserves operadic compositions.
Given little $2$--cubes $L \in \cubes{2}(k)$, $P \in \cubes{2}(l)$ and an
integer $i \leq k$,
we need to show that 
$$\kappa_{k+l-1}(L \circ_i P)=\kappa_k(L) \circ_i \kappa_l(P) . $$
Let $\sigma$ and $\tau$ be permutations that respectively order $L$ and $P$.
Unravelling the definition of $\kappa$ shows that the desired equality
boils down to checking that
$\sigma \circ_i \tau$ orders $L \circ_i P$.
Recall the definitions of $\sigma \circ_i \tau$ and $L \circ_i P$:
\begin{itemize}
\item $\sigma \circ_i \tau$ shuffles the interval $\{i, \dots,	i+l-1\}$
according to $\tau$, then permutes
$\{1, \dots, k+l-1\}$ according to $\sigma$ while treating the shuffled
interval as a single block.
\item $L \circ_i P$ is obtained from $L$ by replacing $L^i$ with $\bigoplus_r
L^i \circ P^r$.
\end{itemize}
\unskip
If $L^i$ and $L^j$ are incomparable, $L^i \circ P^r$ and $L^j$ also are,
so the result follows. 
\end{proof}

This recently developed structure on the space of framed long
knots generalizes the stacking operation in the
following sense: acting with two side-by-side rectangles of width $1$
on two knots results in their concatenation.
In particular, the $\Com$--algebra structure on $\pi_0 EC(1, D^2)$
induced from
$\cubes{2} \rightarrow \EE_{EC(1, D^2)}$ is the monoid structure described
in Subsection \ref{EmbeddingSpacesKnot}.

\begin{Thm}[Budney~\cite{Budney}]
\label{KhatSubCtwAlgebra}
The fat long knots $\KKhat$ form a sub--$\cubes{2}$--algebra of $EC(1,D^2)$.
\end{Thm}

\begin{proof}
As mentioned in Subsection \ref{EmbeddingSpacesKnot}, the framing number
$\omega$ descends to a morphism of monoids
$\pi_0 EC(1, D^2) \rightarrow \mathbb{Z}$. Recall also that $\pi_0 \cubes{2}$
is the commutative operad $\Com$.
The integers $\mathbb{Z}$ form an abelian group and thus a commutative
monoid.
They can therefore be seen as a $\cubes{2}$--algebra via the structure map
$\cubes{2} \rightarrow \Com \rightarrow \EE_{\mathbb{Z}}$.
In this framework, the framing number $\omega$ is a $\cubes{2}$--algebra
morphism, hence the result. 
\end{proof}

We conclude this paragraph with a quick discussion about $\kappa$. Let $L$
be an element of $\cubes{2}(k)$.
The heights of the little cubes of $L$ only appear in the formula of
$\kappa_k(L)$
to dictate a composition order. This is done via an ordering permutation,
which we defined
as an element $\sigma \in \Sigma_k$ such that the mapping $i \mapsto
L^{\sigma(i)}$
is nondecreasing. Here, one can replace the word ``decreasing'' with
``increasing'' and define another action
with the same formula. We refer to it as \emph{Budney's reverse action}.
There is no substantial difference between these two versions of $\kappa$,
nor is there a reason to prefer one or the other.
We still introduce the two of them now, as they will both play a role in
the next paragraphs.
Informally, the need for a reversed action arises because knots yielding
split links must be tied at the beginning
of a composition, while knots yielding cables must be tied at the end. 

\subsection{Burke and Koytcheff's actions on fat $2$--string links}

This paragraph is a first step towards an adaptation of Budney's work to
$2$--string links.
Namely, we build an action of $\cubes{1}$ on $EC_\iota(1, D^2)$ and $\LLhat$.
This structure has already been exhibited by Burke and Koytcheff \cite[Theorem~6.8]{KoytcheffInfect},
with $\cubes{1}$ appearing as a suboperad of a way bigger object called
the infection operad.
As before, we start with an action of $\CAut_1$ on the embeddings
$(\mathbb{R} \times D^{2})\amalg (\mathbb{R} \times D^{2}) \cof
\mathbb{R}\times D^2$,
then proceed to extend it to $\cubes{1}$. 

\begin{Prop}
The topological group $\CAut_1$ acts on
$\Emb((\mathbb{R} \times D^{2})^{\amalg 2}, \mathbb{R} \times D^{2})$ via
	\[
\begin{gathered}
\CAut_1 \times \Emb((\mathbb{R} \times D^{2})^{\amalg 2} , \mathbb{R}
\times D^{2})
\rightarrow \Emb((\mathbb{R} \times D^{2})^{\amalg 2}, \mathbb{R}
\times D^{2}); \\
(L, l) \mapsto (L \times \id_{D^2})\circ l \circ (L^{-1} \times
\id_{D^2})^{\amalg 2}.
\end{gathered}
\]
Moreover, this restricts to an action of $\cubes{1}(1)$ on $EC_\iota(1,
D^2)$ that we note $(L, l) \mapsto Ll$.
\end{Prop}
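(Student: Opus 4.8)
The statement is the two-stranded analogue of Proposition \ref{actionCAut1}, so the plan is to mirror its proof almost verbatim, paying attention only to the places where the two-component domain and the standard embedding $\id_{\bb{R}} \times \iota$ enter. First I would check that the formula $(L, l) \mapsto (L \times \id_{D^2})\circ l \circ (L^{-1} \times \id_{D^2})^{\amalg 2}$ does land in $\Emb((\bb{R} \times D^{2})^{\amalg 2}, \bb{R} \times D^{2})$: it is a composite of an embedding $l$ with the two diffeomorphisms $L \times \id_{D^2}$ and $(L^{-1} \times \id_{D^2})^{\amalg 2}$ of $\bb{R} \times D^2$ and $(\bb{R} \times D^2)^{\amalg 2}$ respectively (recall $L \in CAut_1$ is an affine increasing automorphism of $\bb{R}$, hence invertible), so the composite is again an embedding. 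Then I would verify the group action axioms: the identity of $CAut_1$ acts trivially, and associativity $(L_1 L_2)\cdot l = L_1 \cdot (L_2 \cdot l)$ follows from the fact that $(L_1 L_2) \times \id_{D^2} = (L_1 \times \id_{D^2})(L_2 \times \id_{D^2})$ and the two inner copies of $D^2$ telescope in the same way, exactly as in Proposition \ref{actionCAut1}. Continuity of the action map is inherited from continuity of composition of embeddings, which holds in our category of smooth manifolds.

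The second assertion is the only part requiring a genuinely new (but still short) argument: if $L \in \cubes{1}(1)$ and $l \in EC_\iota(1, D^2)$, one must show $Ll \in EC_\iota(1, D^2)$, i.e.\ that $Ll$ restricts to the standard embedding $\id_{\bb{R}} \times \iota$ outside $(J \times D^2)^{\amalg 2}$ and sends the interior of $(J \times D^2)^{\amalg 2}$ into the interior of $J \times D^2$. For the first condition, fix a point $(t, x)$ in one of the two copies of $\bb{R} \times D^2$ with $t \notin J$. Since $L$ is a little $1$-cube, $L(J) \subseteq J$, hence $L^{-1}(t) \notin J$; therefore $(L^{-1} \times \id_{D^2})^{\amalg 2}(t, x) = (L^{-1}(t), x)$ lies outside $(J \times D^2)^{\amalg 2}$, where $l$ agrees with $\id_{\bb{R}} \times \iota$. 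So $l(L^{-1}(t), x) = (L^{-1}(t), \iota(x))$, and applying $L \times \id_{D^2}$ gives $(t, \iota(x))$, which is exactly $(\id_{\bb{R}} \times \iota)(t, x)$. For the interiority condition, note $L$ maps the interior of $J$ into the interior of $J$ and $L^{-1}$ does as well, so the interior of $(J \times D^2)^{\amalg 2}$ is sent by $(L^{-1} \times \id_{D^2})^{\amalg 2}$ into the interior of $(J \times D^2)^{\amalg 2}$, then by $l$ into the interior of $J \times D^2$ (by Definition \ref{DefFramedStringLink}), and finally by $L \times \id_{D^2}$ into the interior of $J \times D^2$ since $L$ preserves the interior of $J$. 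This shows $Ll \in EC_\iota(1, D^2)$, and that the restricted map $\cubes{1}(1) \times EC_\iota(1, D^2) \to EC_\iota(1, D^2)$ is continuous follows from continuity of the ambient action.

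There is no real obstacle here; the only point deserving care is the bookkeeping of the support condition $supp_\iota(f) \subset (J \times D^2)^{\amalg 2}$ versus the literal ``restricts to the standard embedding'' phrasing — but these are equivalent by Definition \ref{DefFramedStringLink}, so the argument above in terms of ``agrees with $\id_{\bb{R}} \times \iota$ for $t \notin J$'' is exactly what is needed. The proof is therefore a direct transcription of the proof of Proposition \ref{actionCAut1}, with $\bb{R} \times D^2$ replaced by $(\bb{R} \times D^2)^{\amalg 2}$ in the source and the identity replaced by the fixed embedding $\iota$ on the disc factors; I would write it in two short paragraphs, the first disposing of the group-action axioms by one line each, the second carrying out the support computation displayed above.
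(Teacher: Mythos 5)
Your overall route is the paper's: the action axioms are immediate, the restriction statement is proved exactly as for Proposition \ref{actionCAut1}, and your computation showing that $Ll$ agrees with $\id_{\bb{R}} \times \iota$ at points $(t,x)$ with $t \notin J$ is correct and is essentially the whole of the paper's argument. However, the one place where you go beyond the paper --- checking the interiority condition $Ll(\inter((J \times D^2)^{\amalg 2})) \subset \inter(J \times D^2)$ from Definition \ref{DefFramedStringLink} --- contains a false step. You assert that $L^{-1}$ maps the interior of $J$ into the interior of $J$; it does not. A little $1$-cube satisfies $L(J) \subseteq J$, so its inverse, extended to the real line, satisfies $L^{-1}(J) \supseteq J$ and in general sends points of $\inter J$ outside $J$ entirely (take $L \colon x \mapsto x/2$). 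Consequently $(L^{-1} \times \id_{D^2})^{\amalg 2}$ does \emph{not} send $\inter((J \times D^2)^{\amalg 2})$ into itself, and your chain of inclusions breaks at its first link.

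The conclusion is nevertheless true, and the repair is a two-case argument on a point $(t,x)$ with $t \in \inter J$ and $x \in \inter D^2$. If $L^{-1}(t) \in \inter J$, then $(L^{-1}(t), x)$ lies in $\inter((J\times D^2)^{\amalg 2})$, so $l$ sends it into $\inter(J \times D^2)$, and $L \times \id_{D^2}$ keeps it there because $L$ (as opposed to $L^{-1}$) does map $\inter J$ into $\inter J$. If $L^{-1}(t) \notin \inter J$, then $l$ agrees with $\id_{\bb{R}} \times \iota$ at $(L^{-1}(t), x)$ (directly when $L^{-1}(t) \notin J$, by continuity and the support condition when $L^{-1}(t) \in \del J$), so $Ll(t,x) = (t, \iota(x))$, which lies in $\inter(J \times D^2)$ because $t \in \inter J$ and $\iota$ maps $D^2$ into discs of radius $\sfrac{1}{8}$ contained in the interior of $D^2$. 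With this substitution your proof is complete; the paper's own proof does not spell out the interiority condition at all, so your instinct to check it was sound even though the execution slipped.
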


\begin{proof}
The proof of this is very similar to the proof of Proposition \ref{actionCAuton}:
the fact that the formula above specifies a valid action of a topological
group is still clear and the restriction
statement is proved just as in the case of framed long knots. 
\end{proof}

To distinguish Budney's action from the one we build now, we
denote the structure map by $\lambda$.
The space $\cubes{1}(0)$ in arity $0$ still consists of a single point
that $\lambda_0$ maps to $\id_{\mathbb{R}}\times \iota$
from Definition \ref{DefFramedStringLink}.
For any positive integer $k$ and $L \in \cubes{1}(k)$, we set $\lambda_k(L)$
to the map that concatenates $k$ framed string links according to the
configuration of intervals $L$.
That is, for every $\mathbold{f}=(f_i)_i \in EC_\iota(1, D^2)^{\times k}$,
\[
\begin{gathered}
\lambda_k(L)(\mathbold{f}) \co (\mathbb{R} \times D^2)\amalg(\mathbb{R}
\times D^2) \rightarrow \mathbb{R} \times D^2;\\
(t, x) \mapsto
\begin{cases}
L^i f_i(t, x) & \text{when }t \in L^i(J), \\
(t, \iota(x)) & \text{elsewhere}.
\end{cases}
\end{gathered}
\]
The embeddings patch in a differentiable way because the little cubes are
almost disjoint.
The outcome lies in $EC_\iota(1, D^2)$ because
$\supp_\iota(L^if_i)=(L^i \times \id_{D^2})^{\amalg 2}(\supp_\iota(f_i))$.

\begin{figure}
\small
\def\svgwidth{\hsize}
%% Creator: Inkscape 1.1 (c68e22c387, 2021-05-23), www.inkscape.org
%% PDF/EPS/PS + LaTeX output extension by Johan Engelen, 2010
%% Accompanies image file '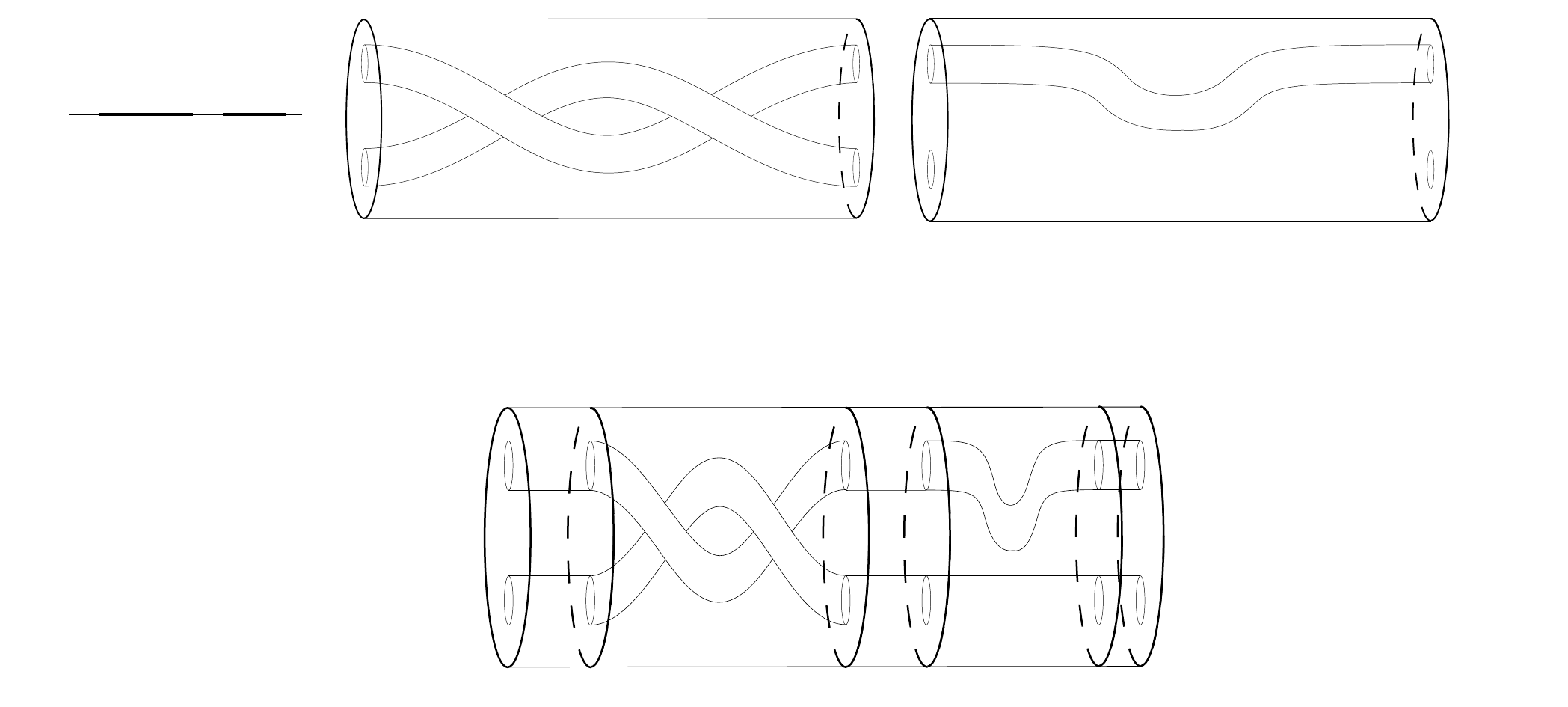' (pdf, eps, ps)
%%
%% To include the image in your LaTeX document, write
%%   \input{<filename>.pdf_tex}
%%  instead of
%%   \includegraphics{<filename>.pdf}
%% To scale the image, write
%%   \def\svgwidth{<desired width>}
%%   \input{<filename>.pdf_tex}
%%  instead of
%%   \includegraphics[width=<desired width>]{<filename>.pdf}
%%
%% Images with a different path to the parent latex file can
%% be accessed with the `import' package (which may need to be
%% installed) using
%%   \usepackage{import}
%% in the preamble, and then including the image with
%%   \import{<path to file>}{<filename>.pdf_tex}
%% Alternatively, one can specify
%%   \graphicspath{{<path to file>/}}
%% 
%% For more information, please see info/svg-inkscape on CTAN:
%%   http://tug.ctan.org/tex-archive/info/svg-inkscape
%%
\begingroup%
  \makeatletter%
  \providecommand\color[2][]{%
    \errmessage{(Inkscape) Color is used for the text in Inkscape, but the package 'color.sty' is not loaded}%
    \renewcommand\color[2][]{}%
  }%
  \providecommand\transparent[1]{%
    \errmessage{(Inkscape) Transparency is used (non-zero) for the text in Inkscape, but the package 'transparent.sty' is not loaded}%
    \renewcommand\transparent[1]{}%
  }%
  \providecommand\rotatebox[2]{#2}%
  \newcommand*\fsize{\dimexpr\f@size pt\relax}%
  \newcommand*\lineheight[1]{\fontsize{\fsize}{#1\fsize}\selectfont}%
  \ifx\svgwidth\undefined%
    \setlength{\unitlength}{1006.2992126bp}%
    \ifx\svgscale\undefined%
      \relax%
    \else%
      \setlength{\unitlength}{\unitlength * \real{\svgscale}}%
    \fi%
  \else%
    \setlength{\unitlength}{\svgwidth}%
  \fi%
  \global\let\svgwidth\undefined%
  \global\let\svgscale\undefined%
  \makeatother%
  \begin{picture}(1,0.45070423)%
    \lineheight{1}%
    \setlength\tabcolsep{0pt}%
    \put(0,0){\includegraphics[width=\unitlength,page=1]{exampleKoytcheff.pdf}}%
    \put(0.57167982,0.24307625){\color[rgb]{0,0,0}\makebox(0,0)[lt]{\lineheight{1.25}\smash{\begin{tabular}[t]{l}$\lambda_2$\end{tabular}}}}%
    \put(0.09109751,0.38961121){\color[rgb]{0,0,0}\makebox(0,0)[lt]{\lineheight{1.25}\smash{\begin{tabular}[t]{l}$1$\end{tabular}}}}%
    \put(0.15735023,0.38998766){\color[rgb]{0,0,0}\makebox(0,0)[lt]{\lineheight{1.25}\smash{\begin{tabular}[t]{l}$2$\end{tabular}}}}%
    \put(0,0){\includegraphics[width=\unitlength,page=2]{exampleKoytcheff.pdf}}%
    \put(0.55708064,0.24353616){\color[rgb]{0,0,0}\makebox(0,0)[lt]{\lineheight{1.25}\smash{\begin{tabular}[t]{l}$\longdownmapsto$\end{tabular}}}}%
  \end{picture}%
\endgroup%

\caption{Illustration of Burke and Koytcheff's action on fat $2$--string
links.}
\end{figure}

\begin{Thm}[Burke and Koytcheff~\cite{KoytcheffInfect}]
\label{ConActsOnECiota-on-Dtw-}
The operations $\lambda$ turn $EC_\iota(1, D^2)$ into a $\cubes{1}$--algebra.
\end{Thm}

\begin{proof}
It is clear that $\lambda_1(\id_{J})$ is the identity on $EC_\iota(1, D^2)$.
We check the compatibility with the symmetric actions.
Let $\tau$ be a permutation, $L \in \cubes{1}(k)$ and $\mathbold{f} \in
EC_\iota(1, D^2)^{\times k}$.
To prove the desired
$\lambda_k(L\tau,\mathbold{f})=\lambda_k(L,\tau\mathbold{f})$,
we show that these maps agree on the images of every little cube of $L$.
This is enough as they clearly restrict to $\id_{\mathbb{R}}\times \iota$
outside of these intervals.
For every $i \leq k$, the left-hand side of the equation restricts to
$(L\tau)^i f_i$ on $(L\tau)^i(J^{k})$.
The right-hand side restricts to $L^{\tau(i)}f_{\tau^{-1} \circ
\tau(i)}=(L\tau)^i f_i$ so we are done.
The associative compatibility is verified the same way. 
\end{proof}

As in the case of knots, the stacking operation arises as a
special case of this recently developed action.
More precisely, acting with two side-by-side intervals of width $1$
on two string links results in their
concatenation. Therefore, the $\As$--algebra structure on $\pi_0 EC_\iota(1,
D^2)$ induced from
$\lambda \co \cubes{1} \rightarrow \EE_{EC_\iota(1, D^2)}$ is the monoid
structure on
$\pi_0EC_\iota(1, D^2)$ discussed in Subsection \ref{EmbeddingSpacesLink}.
Moreover, as in the case of knots, we can restrict ourselves to unframed
embeddings:

\begin{Thm}
\label{LhatSubConAlgebra}
The fat $2$--string links $\LLhat$ form a sub--$\cubes{1}$--algebra of
$EC_\iota(1, D^2)$.
\end{Thm}

\begin{proof}
Just as in Theorem \ref{KhatSubCtwAlgebra}, $\mathbb{Z}^{\times 2}$ is a
group that we can think of as 
an $\As$--algebra
and therefore a $\cubes{1}$--algebra.
This turns the framing number $\omega$ into a morphism of
$\cubes{1}$--algebras, hence the result. 
\end{proof}

\subsection{The action of $\SCL$ on fat $2$--string links}

This section aims to merge the two actions defined above into a single
$\SCL$--algebra structure on the spaces of
fat long knots and fat $2$--string links. More precisely, we build an action
of $\SCL$ on the quadruplet of spaces
$X=(X_o, X_\up, X_\down, X_\updown)$, where $X_o = EC_\iota(1, D^2)$
and $X_s=EC(1, D^2)$ for every
$s \in \{\up, \down, \updown\}$. We start with a lemma to ease the
construction. 

\begin{Lem}
There is a map
	\[
EC_\iota(1, D^2) \times EC(1, D^2)^{\times 3} \rightarrow EC_\iota(1,
D^2); \quad
(l, k_\up, k_\down, k_\updown) \mapsto
k_\updown \circ l \circ [k_\up \amalg k_\down].
\]
\end{Lem}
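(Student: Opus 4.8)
The plan is to verify that the given formula defines a well-defined continuous map into $EC_\iota(1,D^2)$, by checking in turn: composability of the three factors, that the result is an embedding, that it satisfies the two defining conditions of Definition~\ref{DefFramedStringLink}, and finally continuity. For composability, note that $k_\up\amalg k_\down$ is a self-map of $(\bb{R}\times D^2)\amalg(\bb{R}\times D^2)$, which is the source of $l$, and that the target $\bb{R}\times D^2$ of $l$ is the source of $k_\updown$; the composite is thus a map $(\bb{R}\times D^2)\amalg(\bb{R}\times D^2)\to\bb{R}\times D^2$, of the right shape to possibly lie in $EC_\iota(1,D^2)$. Since a coproduct of two embeddings is an embedding, and a composition of embeddings is an embedding, the composite is an embedding.

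I would then check the two conditions defining $EC_\iota(1,D^2)$. For the behaviour outside $(J\times D^2)\amalg(J\times D^2)$: take $(t,x)$ in the $j$-th copy with $t\notin J$; then $k_\up\amalg k_\down$ fixes it since each $k_s$ is the identity off $J\times D^2$, next $l$ sends it to $(t,\iota(x))$ because $l$ agrees there with the standard embedding $\id_{\bb{R}}\times\iota$, and finally $k_\updown$ fixes $(t,\iota(x))$ since its $\bb{R}$-coordinate $t$ still lies outside $J$. Hence the composite agrees with $\id_{\bb{R}}\times\iota$ off $(J\times D^2)\amalg(J\times D^2)$. For the interior condition, the key remark is that every $k\in EC(1,D^2)$ maps $J\times D^2$ into itself and its interior $(-1,1)\times D^2$ into itself: a point of $J\times D^2$ cannot be sent off $J\times D^2$ without colliding with a fixed point of $k$, and a point with $\bb{R}$-coordinate in $(-1,1)$ cannot be sent to $\{\pm1\}\times D^2$, where $k$ is the identity. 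Thus, starting from a point in the interior of the $j$-th copy of $J\times D^2$, the map $k_\up\amalg k_\down$ keeps it in the interior of $(J\times D^2)\amalg(J\times D^2)$, then $l$ sends it into the interior of $J\times D^2$ by Definition~\ref{DefFramedStringLink}, and $k_\updown$ keeps it there; so the composite lies in $EC_\iota(1,D^2)$.

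For continuity, I would present the assignment as a composition of continuous maps: $(k_\up,k_\down)\mapsto k_\up\amalg k_\down$ is continuous, and the two composition operations on embedding spaces are continuous in the $C^\infty$-topology. None of these steps is a genuine obstacle; the only point requiring a little care is the first defining condition, where one must observe that applying $l$ to a point over which the embeddings are standard leaves the $\bb{R}$-coordinate unchanged, so that $k_\updown$ still acts as the identity there.
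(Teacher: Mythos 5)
Your proof is correct and follows essentially the same route as the paper's: continuity comes from continuity of composition in the $C^{\infty}$-topology, and membership in $EC_\iota(1, D^2)$ is deduced from the support inclusions $supp(k_s) \subset J \times D^2$ together with the condition $l(\inter((J \times D^2) \amalg (J \times D^2))) \subset J \times D^2$. You simply spell out in more detail the two defining conditions that the paper's one-line justification leaves implicit.
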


\begin{proof}
The continuity of this application immediately follows from the continuity
of the composition in the
$C^{\infty}$--topology. The purpose of this lemma is actually to check that
$k_\updown \circ l \circ [k_\up \amalg k_\down]$ indeed lives in $EC_\iota(1,
D^2)$.
This follows from the inclusions $\supp(k_s) \subset J \times D^2$ 
for $s \in
\{\up, \down, \updown\}$, and
$l(\inter((J \times D^2)\amalg(J \times D^2))) \subset J \times D^2$. 
\end{proof}

This map is in some way a combination of the morphisms $\varphi^s$
from Subsection 
\ref{EmbeddingSpacesLink}. Indeed, if one restricts this application
to the subspace
$\{\id_{\mathbb{R}}\times \iota\} \times EC(1, D^2) \times
\{\id_{\mathbb{R}\times D^2}\}^{\times 2}$,
the formula becomes
$k_\up \mapsto (\id_{\mathbb{R}} \times \iota) \circ [k_\up \amalg
\id_{\mathbb{R}\times D^2}]$,
which is a fattened version of $\varphi^\up$.
We denote it by $\varphihat^\up$. The same goes for $\down$. In the case
of $\updown$,
one is left with $k_\updown \mapsto k_\updown \circ (\id_{\mathbb{R}}
\times \iota)$. This map sends a long knot
$k_\updown$ to the string link whose strands are parallel and knotted
according $k_\updown$. In other words,
it is again a fattened version of $\varphi^\updown$ that we
denote by 
$\varphihat^\updown$. 

We are now ready to define the morphism $\mu \co \SCL \rightarrow \EE_X$
for the new action.
As the values of $\mu_{(\ult; s)}$ heavily depend on $(\ult; s)$,
defining $\mu$ takes several steps. 

We start by specifying the values of $\mu$ in monochromatic cases.
The operad $\SCL$ restricts to the little cubes operad $\cubes{2}$ on
the colors
$s \in \{\up, \down\}$. We set $\mu$ to Budney's action on these colors.
In other words, $\mu_{(s^k; s)}=\kappa_k$ from Theorem \ref{CtwActsOnEC-on-Dtw-}.
When $s=\updown$, we similarly set $\mu_{(\updown^k; \updown)}$ to Budney's
reverse action,
which we will still denote by $\kappa$ by slight abuse of notation.
On the color $o$, $\SCL$ restricts to the operad $\cubeso{2}$.
There is a morphism  $(\underscore)_\pi \co \cubeso{2} \rightarrow \cubes{1}$
and we set $\mu$ to the composite
$\lambda \circ (\underscore)_\pi$ on this suboperad.

For every $s \in \{\up, \down, \updown\}$, the only input
colors $\ult$
that do not lead to an empty $\SCL(\ult; s)$ are the monochromatic ones
such that $\ult=s^k$.
Thus, we are left to specify $\mu_{(\ult; s)}$ when $s=o$ and the inputs
are mixed.
To this end, we introduce color sorting functions. Let $\ult \in
S^{\star}$. Consider four injective (not necessarily increasing) maps
\[
\alpha_s \co [|\ult|_s]=\{1, \dots, |\ult|_s\} \rightarrow [|\ult|]=\{1,
\dots, |\ult|\},\ s\in S,
\]
whose disjoint images cover $[|\ult|]$ and such that $t_{\alpha_{s}(i)}=s$
for every $i \in [|\ult|_s]$.
These maps regroup inputs of the same color and are said to sort the colors
of $\ult$.
Observe that each $\alpha_s$ lifts to a map $\SCL(\ult; o) \rightarrow
\cubes{2}(|\ult|_s)$
that discards the little cubes whose colors are different from $s$,
\[
\alpha_s \co \SCL(\ult; s) \rightarrow \cubes{2}(|\ult|_s); 
\quad
L \mapsto \bigoplus_i L^{\alpha_s(i)}.
\]
Discarding embeddings also yields a map
\[
\alpha_s \co X^{\times \ult} \rightarrow X_s^{\times |\ult|_s};
\quad
\mathbold{f} \mapsto
\alpha_s \mathbold{f}=(f_{\alpha_s(1)}, \dots, f_{\alpha_s(|\ult|_s)}).
\]
The behavior of these lifts with respect to the symmetric structures
on $\SCL$ and $X^{\times \ult}$ is captured by the following relations:
for every $\sigma \in \Sigma_{|\ult|}$, $\tau \in \Sigma_{|\ult|_s}$,
$L \in \SCL(\ult; o)$ and $\mathbold{f} \in X^{\times \ult}$,
\begin{align*}
(\sigma \circ \alpha_s) L
	& = \bigoplus_i L^{\sigma \circ \alpha_s(i)}
= \bigoplus_i (L\sigma)^{\alpha_s(i)}
=\alpha_s(L \sigma), 
	\quad &
(\sigma \circ \alpha_s) \mathbold{f}
	& = \alpha_s (\sigma^{-1} \mathbold{f}), 
	\cr
(\alpha_s \circ \tau) L
	& = \bigoplus_i L^{\alpha_s \circ \tau(i)}
= \bigoplus_i (\alpha_s L)^{\tau(i)}
=(\alpha_sL)\tau,
	\quad &
(\alpha_s \circ \tau) \mathbold{f}
	& =\tau^{-1} (\alpha_s \mathbold{f}).
\end{align*}
We can finally combine the previous actions and define $\mu_{(\ult;
o)}(L)$ as the map
\[
\begin{gathered}
\mu_{(\ult; o)}(L)\co X^{\times \ult}	\rightarrow EC_\iota(1, D^2); \\
\mathbold{f}
\mapsto
\kappa ( \alpha_\updown L, \alpha_\updown \mathbold{f} )
\circ		\lambda ( \alpha_o L_\pi, \alpha_o \mathbold{f})
\circ	\bigl[\kappa ( \alpha_\up L,\alpha_\up \mathbold{f} )
\amalg	\kappa ( \alpha_\down L, \alpha_\down \mathbold{f} )
\bigr], 
\end{gathered}
\]
where the $\kappa$ on the left-hand side refers to Budney's reverse action
and the other two to Budney's regular action. 

The continuity of $\mu$ is immediate and its values do not depend
on the color sorting functions:
if one chooses to replace $\alpha_o$ with ${\alpha_o}^\prime$,
there is a permutation $\tau$ such that ${\alpha_o}^\prime=\alpha_o
\circ \tau$.
Then, the relations above give
\begin{align*}
	  \lambda ( {\alpha_o}^\prime L_\pi,  {\alpha_o}^\prime
	  \mathbold{f} )
& = \lambda ( (\alpha_o \circ \tau) L_\pi, (\alpha_o \circ \tau)
\mathbold{f} )\\
& = \lambda ( (\alpha_o L_\pi)\tau, \tau^{-1}(\alpha_o
\mathbold{f}) )\\
& = \lambda ( \alpha_o L_\pi, \alpha_o \mathbold{f}).
\end{align*}
The same argument with $\kappa$ shows that the remaining $\alpha_s$ can
be replaced as well.

\begin{figure} 
\small
\def\svgwidth{\hsize}
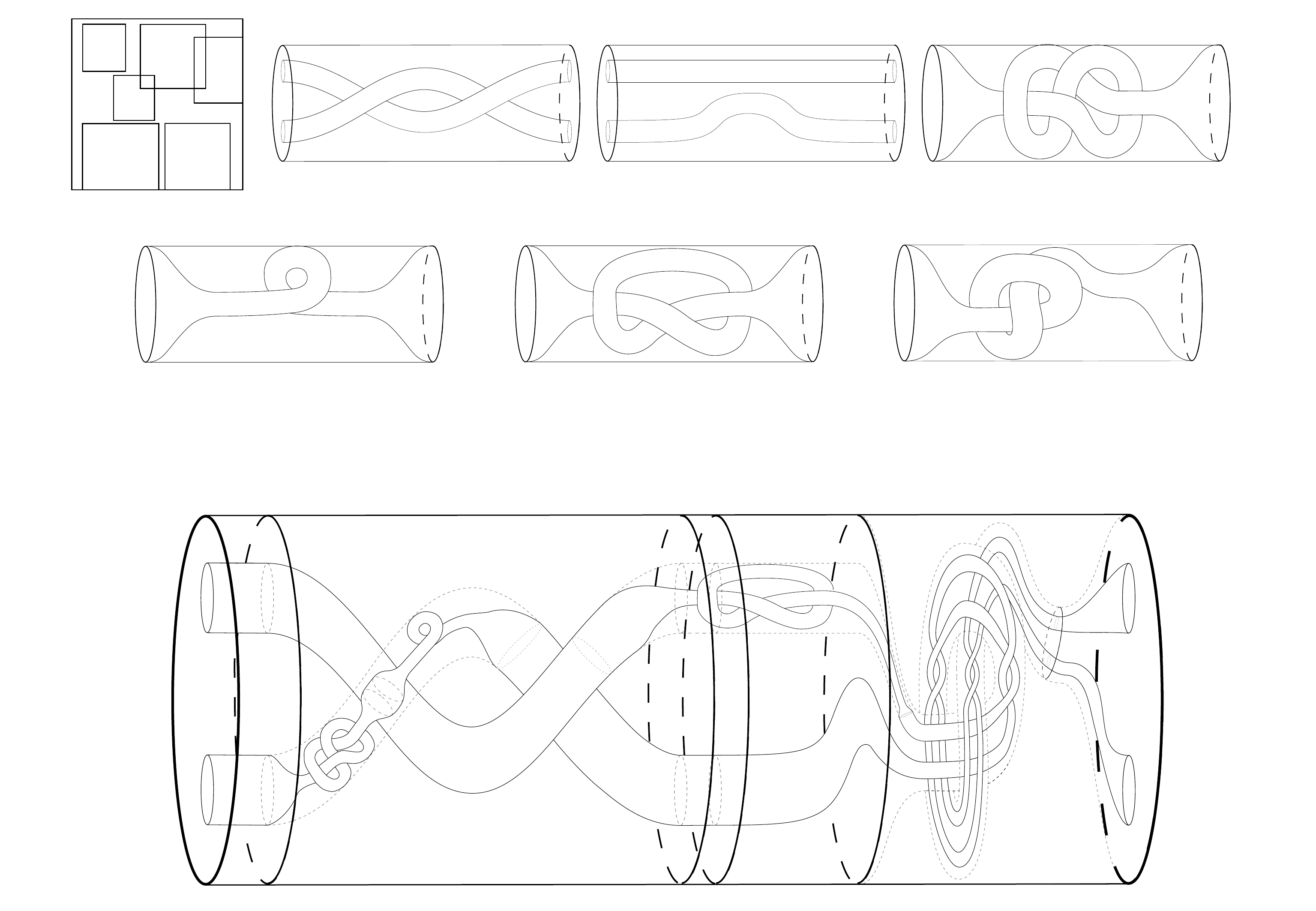
\caption{Illustration of the action of $\SCL$ on fat $2$--string links and
fat long knots.}
\end{figure}

\begin{Thm}
\label{SCLActs}
The operations $\mu$ turn the quadruplet $X$ into 
an $\SCL$--algebra.
\end{Thm}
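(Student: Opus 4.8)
The plan is to verify that the collection of maps $\mu_{(\u{t};s)}: \SCL(\u{t};s) \to \EE_X(\u{t};s)$ satisfies the three defining axioms of an algebra morphism: compatibility with units, with the symmetric group actions, and with operadic compositions. Since $\mu$ is defined case-by-case according to the output color, each axiom will be checked on each stratum, using the already-established facts that Budney's action $\kappa$ (regular and reverse) is a $\cubes{2}$-action (Theorem \ref{C2ActsOnEC(1,D2)}) and that $\lambda \circ (\u{\ \ })_\pi$ is a $\cubes{1}$-action on $EC_\iota(1,D^2)$ (Theorem \ref{C1ActsOnECiota(1,D2)}, together with the fact that $(\u{\ \ })_\pi: \cubeso{2} \to \cubes{1}$ is an operad morphism). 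The monochromatic cases $s \in \{\up,\down,\updown\}$ and the monochromatic case $\u{t}=o^n$, $s=o$, are immediate: there $\mu$ literally \emph{is} one of these known actions. So the real content is the mixed case $s=o$.

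First I would record the unit axiom: $\mu_{(o;o)}$ sends the identity little $2$-cube to $\id_{\bb{R}\times D^2}$, which is clear since $(\id_{J^2})_\pi = \id_J$ and $\lambda_1(\id_J)=\id$; the other units are handled by $\kappa$ and $\lambda$ being actions. Next, equivariance: given $L \in \SCL(\u{t};o)$, $\bm{f}\in X^{\times\u{t}}$, and $\sigma \in \Sigma_{|\u{t}|}$, I want $\mu(L\sigma, \sigma^{-1}\bm{f}) = \mu(L,\bm{f})$. Using the relations displayed in the text, $\alpha_s(L\sigma) = (\sigma\circ\alpha_s)L$ and $\alpha_s(\sigma^{-1}\bm{f}) = (\sigma\circ\alpha_s)\bm{f}$; reading off the definition, $\mu(L\sigma,\sigma^{-1}\bm{f})$ is built from $\kappa$ and $\lambda$ applied to $\bigl((\sigma\circ\alpha_s)L, (\sigma\circ\alpha_s)\bm{f}\bigr)$ — but $\sigma\circ\alpha_s$ is itself a valid color-sorting map for $\u{t}$ (for the new, permuted tuple it sorts), and since $\mu_{(\u{t};o)}$ was already shown independent of the choice of sorting functions, this equals $\mu(L,\bm{f})$. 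Care is needed here because the source and target tuples differ; the cleanest formulation is: $\mu_{(\u{t};o)}$ descends through the quotient defining the free algebra, equivalently the square relating $\mu_{(\u{t};o)}$ and $\mu_{(\u{t}\sigma;o)}$ commutes, and this follows formally from the sorting-independence already proved.

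The main step is the composition axiom. There are two sub-cases: composing along a monochromatic slot and the interaction between slots. For $\circ_i: \SCL(\u{t};o) \times \SCL(\u{u};t_i) \to \SCL(\u{t}\circ_i\u{u};o)$: if $t_i = s \in \{\up,\down,\updown\}$, then $\u{u}=s^m$, and plugging $P \in \cubes{2}(m)$ into the $i$-th cube of $L$ only affects the color-$s$ cubes of $L$; applying $\alpha_{s}$ to $L\circ_i P$ recovers $(\alpha_{s}L)\circ_j P$ for the appropriate $j$, while $\alpha_{s'}$ for $s'\neq s$ is unchanged, so the claim reduces to $\kappa$ (or $\kappa$-reverse) respecting $\circ_j$ on the color-$s$ factor alone, plus the other factors being untouched — which is Theorem \ref{C2ActsOnEC(1,D2)}. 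If $t_i = o$, then inserting $P \in \SCL(\u{u};o)$ into the $i$-th (open) cube of $L$ requires understanding how $(\u{\ \ })_\pi$ interacts with $\circ_i$ and how the color-$s$ cubes of $P$ get nested inside the open cube $L^i$: here $(L\circ_i P)_\pi$ restricted to open colors is $L_\pi \circ (\text{stuff})$ via the operad morphism $(\u{\ \ })_\pi$, so the $\lambda$-part matches by Theorem \ref{C1ActsOnECiota(1,D2)}; meanwhile the $\up,\down,\updown$ cubes of $P$, which sit inside $L^i$, get composed with $L^i_\pi$ on the nose, and one checks the resulting nested embeddings agree with applying $\kappa$ twice. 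The hard part will be bookkeeping this last identity cleanly — tracking how the little cubes of $P$ in colors $\up,\down,\updown$, which lie under the open cube $L^i$, contribute to the formula for $\mu(L\circ_i P)$ versus $\mu(L)\circ_i\mu(P)$, and confirming the composites of embeddings of $\bb{R}\times D^2$ literally coincide (this is where the specific form $k_\updown \circ l \circ [k_\up \amalg k_\down]$ from the Lemma, and the reverse-order convention for the $\updown$-action, do their job). I would isolate this in a single computational lemma comparing the two embeddings pointwise, on each piece $L^i(J^2)$ and its complement, and everything else is formal.
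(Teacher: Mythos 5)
Your plan is correct and follows essentially the same route as the paper: reduce the monochromatic cases to Theorems \ref{C2ActsOnEC(1,D2)} and \ref{C1ActsOnECiota(1,D2)}, handle equivariance via independence of the color-sorting functions, and concentrate the work in the $t_i=o$ composition case. The one point your deferred ``computational lemma'' leaves implicit is the paper's actual mechanism there: because the open cube $L^i$ meets the lower face ($L^i_t=-1$), the inserted cubes $L^i\circ\beta_s P$ are minimal in Budney's partial order, so they may be composed first for $s\in\{\up,\down\}$ under the regular action and last for $s=\updown$ under the reverse action, which is exactly what matches the sandwich $k_\updown\circ l\circ[k_\up\amalg k_\down]$.
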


\begin{proof}
First of all, it is clear that every $\mu_{(s;s)}$ sends $\id_{J^2}$
to the identity.
The symmetric compatibility is also quickly verified:
when functions $(\alpha_s)_s$ sort the colors of $\ult$, the composites
$(\sigma^{-1} \circ \alpha_s)_s$ sort the colors of $\ult\sigma$ and the
needed equality follows. 

We are left to check the compatibility with the operadic composition.
Let $L \in \SCL(\ult; s)$ and $P \in \SCL(\ulu; t_i)$ for some $i$.
We need to show that $\mu(L \circ_i P)=\mu(L) \circ_i \mu(P)$.
The validity of Budney, Burke and Koytcheff's actions
(Theorems~\ref{CtwActsOnEC-on-Dtw-} and~\ref{ConActsOnECiota-on-Dtw-})
implies the result when $\ult$ and $\ulu$ are monochromatic of the
same color.
Since there is no operation with output color $s \in \{\up, \down, \updown\}$
and input colors $\ult \neq s^k$, we may assume that $s=o$ and $\ult
\neq o^k$.
When evaluated in embeddings $\mathbold{f} \in X^{\times \ult \circ_i
\ulu}$, the desired equality reads
\[
	\displaylines{
\mu_{(\ult \circ_i \ulu; o)} (L \circ_i P, \mathbold{f} )
\hfill \cr \hfill
=
\mu_{(\ult; o)} \bigl(L, f_1, \dots, f_{i-1},
\mu_{(\ulu; t_i)} (P, f_{i}, \dots, f_{i+|\ulu|-1} ),
f_{i+|\ulu|}, \dots, f_{|\ult \circ_i \ulu|}\bigr).
	}
\]
We split cases and unravel the definition of $\mu$ on both sides of this
equation. 

Assume first that $t_i=\up$. This forces $\ulu=\up^{|\ulu|}$.
Let $(\alpha_s)_s$ sort the colors of $\ult$. We may ask for
$\alpha_\up(|\ult|_\up)=i$.
We sort the colors $s \neq \up$ in $\ult \circ_i \ulu$
with functions $(\gamma_s)_s$ mapping $j$ to $\alpha_s(j)$ if $\alpha_s(j)<i$
or to $\alpha_s(j)+|\ulu|$ if $\alpha_s(j)>i$.
The reason for this choice is $\gamma_s (L \circ_i P)=\alpha_s L$.
For the remaining $\gamma_\up$, we use the same construction on
$[|\ult|_\up-1]$ and
extend it to $[|\ult \circ_i \ulu|_\up]$ via the increasing map onto
the interval $\{i+j,\ j < |\ulu|\}$.
The equality reduces to
\[
K_{\updown} \circ \Lambda \circ [ K^L_\up \amalg K_\down ]
=K_{\updown} \circ \Lambda \circ [ K^R_\up \amalg K_\down ],
\vspace{-5pt}
\]
where 
\begin{align*}
\Lambda & = \lambda (\alpha_o L_\pi, \gamma_o \mathbold{f}),
\\
K^L_\up & = \kappa (\gamma_\up(L \circ_i P), \gamma_\up \mathbold{f} ), 
\\
K_{\updown} & = \kappa (\alpha_\updown L, \gamma_\updown \mathbold{f}), 
\\
K^R_\up & =\kappa (\alpha_\up L, f_{\alpha_\up(1)}, \dots, f_{\alpha_\up(|\ult|_\up-1)}, \kappa(P, f_{i}, \dots, f_{|\ulu|+i-1}) ),
\\ 
K_\down	& =\kappa (\alpha_\down L, \gamma_\down \mathbold{f} ).
\end{align*}
Furthermore,
$\gamma_\up(L \circ_i P)=\alpha_\up L \circ_{|\ult|_\up}
P$, so the validity of Budney's action
(Theorem \ref{CtwActsOnEC-on-Dtw-}) completes the proof in this case.
The same manipulations treat the cases $t_i=\down$ and $\updown$. 

We are left to treat the case $s = o$ and $t_i = o$.
Let $(\alpha_s)_s$ and $(\beta_s)_s$ be color sorting functions for $\ult$
and $\ulu$, respectively.
We may again ask for $\alpha_o(|\ult|_o)=i$. Let $(\gamma_s)_s$ be the
color sorting functions for
$\ult \circ_i \ulu$ one naturally builds from $(\alpha_s)_s$ and
$(\beta_s)_s$.
More precisely, $\gamma_s$ agrees with $\alpha_s$ on ${\alpha_s}^{-1}(\{l \mid l <i\})$,
with $\alpha_s+|\ulu|$ on ${\alpha_s}^{-1}(\{l \mid \nobreak l >i\})$ and
maps the remaining interval to the inputs $s$ in $\{i+l \mid l < |\ulu|\}$
according to~$\beta_s$.
These choices are the ones giving $\gamma_s(L \circ_i P)=\alpha_sL \oplus
(L^i \circ \beta_sP)$
for every $s$ in $\{\up, \down, \updown\}$ and $\gamma_o(L \circ_i
P)=\alpha_oL \circ_{|\ult|_o} \beta_o P$. 
The left-hand side of the equality reads
$K^L_\updown \circ \Lambda^{L} \circ [ K^L_\up \amalg K^L_\down ]$, where
\begin{align*}
\Lambda^L & =\lambda (\alpha_oL \circ_{|\ult|_o} \beta_o P, \gamma_o
\mathbold{f}) \qr{and} 
\cr
K^L_s & = \kappa (\alpha_sL \oplus (L^i \circ \beta_s P), \gamma_s
\mathbold{f} )
\qr{for every} s \in \{\up, \down, \updown\}.
\end{align*}
On the other hand, the right-hand side of the equality is
$K^R_\updown \circ \Lambda^R \circ [ K^R_\up \amalg K^R_\down ]$,
where
\begin{align*}
\Lambda^R & = \lambda (\alpha_oL, f_{\gamma_o(1)}, \dots,
f_{\gamma_o(|\ult|_o-1)},
\mu_{(\ulu;o)} (P, f_{i}, \dots, f_{i+|\ulu|-1} ))
\qr{and} 
\cr
K^R_s & = \kappa (\alpha_sL, f_{\gamma_s(1)}, \dots,
f_{\gamma_s(|\ult|_s)}) \qr{for every} s \in \{\up, \down, \updown\}.
\end{align*}
But $\mu_{(\ulu; o)} (P, f_{i}, \dots, f_{i+|\ulu|-1} )$
is itself of the form
${K_\updown}^{\prime} \circ \Lambda^{\prime} \circ [ {K_\up}^{\prime}
\amalg {K_\down}^{\prime} ]$, where
\begin{align*}
\Lambda^{\prime} & = \lambda (\beta_o P, f_{\gamma_o(|\ult|_o)}, \dots,
f_{\gamma_o(|\ult \circ_i \ulu|_o)}) \qr{and} 
\cr
{K_s}^{\prime} & = \kappa (\beta_o P, f_{\gamma_s(|\ult|_s+1)}, \dots,
f_{\gamma_s(|\ult \circ_i \ulu|_s)}) \qr{for every} s \in \{\up, \down, \updown\}.
\end{align*}
It is easy to check from the definition of $\lambda$ and Theorem \ref{ConActsOnECiota-on-Dtw-} that
\[
\Lambda^R=L^i {K_\updown}^\prime \circ \Lambda^L \circ [ L^i
{K_\up}^\prime \amalg L^i {K_\down}^\prime  ].
\]
We get the following new expression for the whole right-hand side of
the equation:
\[
	 (K^R_\updown \circ L^i {K_\updown}^\prime )
\circ \Lambda^L
\circ [	L^i {K_\up}^\prime \circ K^R_\up
\amalg		L^i {K_\down}^\prime \circ K^R_\down ].
\]
Thus we are left to identify $K$ factors. We previously computed
\[
K^L_\up= \kappa (\alpha_\up L \oplus (L^i \circ \beta_\up P),
\gamma_\up \mathbold{f} ). 
\]
Recall that when evaluating $\kappa$, one chooses a permutation that orders
$\alpha_\up L \oplus (L^i \circ \beta_\up P)$ and composes the embeddings
accordingly.
Here, $L^i$ is a little $2$--cube that meets the lower face of the unit cube.
In other words, $L^i_t=-1$ and cannot get any lower. Thus, the factors
$(L^i \circ \beta_\up P)^j f_{\gamma_\up (|\ult|_\up+j)}$ can be placed
in first position
when computing~$K^L_\up$. This ultimately shows that
\[
K^L_\up= L^i {K_\up}^\prime \circ K^R_\up.
\]
One deals with $\down$ the exact same way.
For $\updown$, the same phenomenon with Budney's reverse action shows that
the factors
$(L^i \circ \beta_\updown P)^j f_{\gamma_\updown (|\ult|_\updown+j)}$
can be placed in last position when computing $K^L_\updown$, which again
shows the desired
\[
K^L_\updown=K^R_\updown \circ L^i  {K_\updown}^\prime.
\]
\end{proof}

Once again, the concatenation comes as a special case with side-by-side
cubes of equal width.
Budney's action on knots can be recovered and one can also turn a knot
into a double cable or a split link
using identity cubes in $\SCL(s;o)$ for $s \in \{\up, \down, \updown\}$.
More precisely, $\mu_{(s; o)}(\id_{J^2})=\hat\varphi^s$.
This shows that the $\pi_0\SCL$--algebra structure on the quadruplet $\pi_0X$
is the data of the usual monoids
$\pi_0EC(1, D^2)$ and $\pi_0EC_\iota(1, D^2)$,
together with the three distinct independent actions of $\pi_0EC(1, D^2)$ on
$\pi_0EC_\iota(1, D^2)$ given by the $\hat\varphi^s$, $s \in \{\up, \down,
\updown\}$.
Finally, the spaces of unframed knots and links are still stable:

\begin{Thm}
The quadruplet $(\LLhat^0, \KKhat, \KKhat, \KKhat)$ forms a
sub--$\SCL$--algebra of $X$.
\end{Thm}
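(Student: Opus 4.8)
The plan is to reduce the statement to a computation in $\pi_0$ and then to invoke the explicit description of $\pi_0\SCL$-algebras obtained above. First I would observe that $\KKhat=\omega^{-1}(0)$ sits inside $EC(1,D^2)$ as a union of path components: the framing number $\omega$ is isotopy invariant, hence locally constant, so $\omega^{-1}(0)$ is open and closed. The same applies to the three integer invariants $\omega_\up$, $\omega_\down$ and the linking number of the closed-up cores, so $\LLhat^0=(\omega_\up,\omega_\down,lk)^{-1}(0,0,0)$ is likewise a union of path components of $EC_\iota(1,D^2)$. Since a continuous map carries a path component into a path component, for any $\theta\in\SCL(\u{t};s)$ and any tuple $\bm{f}\in X^{\times\u{t}}$ the path component of $\mu(\theta)(\bm{f})$ depends only on $\pi_0\theta$ and on the path components of the entries of $\bm{f}$. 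It therefore suffices to check that the quadruplet $(\pi_0\LLhat^0,\pi_0\KKhat,\pi_0\KKhat,\pi_0\KKhat)$ is a sub-$\pi_0\SCL$-algebra of $\pi_0X$; this also conveniently sidesteps the distinction between Budney's action and his reverse action on the color $\updown$, which agree after applying $\pi_0$.

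Next I would unpack what a sub-$\pi_0\SCL$-algebra means here using the description of $\pi_0\SCL$-algebras from Section~\ref{Operads}: $\pi_0X$ is the monoid $\pi_0 EC_\iota(1,D^2)$ (color $o$) together with three copies of the monoid $\pi_0 EC(1,D^2)$ (colors $\up$, $\down$, $\updown$) acting on it through the maps $\varphihat^s$. So two things must be verified. That $\pi_0\KKhat$ is a submonoid of $\pi_0 EC(1,D^2)$ and $\pi_0\LLhat^0$ a submonoid of $\pi_0 EC_\iota(1,D^2)$ follows from additivity of the framing and linking numbers under concatenation and their vanishing on the units; this is precisely the content of Theorems~\ref{KhatSubC2Algebra} and \ref{LhatSubC1Algebra}, together with the same (elementary) additivity statement for the linking number of the cores. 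It then remains to see that these submonoids are stable under the three actions. Since $\pi_0\LLhat^0$ is already known to be a submonoid, stability under the action of color $s$ reduces to the single claim $\varphihat^s(k)\in\LLhat^0$ for every $k\in\KKhat$ and every $s\in\{\up,\down,\updown\}$.

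This last point is the only geometric input. For $s\in\{\up,\down\}$ it is immediate: $\varphihat^s(k)$ is split, so its two framing numbers are $\omega(k)=0$ and $0$, and the linking number of its cores is $0$, whence $\varphihat^s(k)\in\LLhat^0$. For $s=\updown$ the two strands of $\varphihat^\updown(k)$ are parallel push-offs of a single copy of $k$, so $\omega_\up(\varphihat^\updown(k))$, $\omega_\down(\varphihat^\updown(k))$ and the linking number of its cores all equal $\omega(k)=0$, and again $\varphihat^\updown(k)\in\LLhat^0$. I expect this identification --- that the two framing numbers and the linking number of $\varphihat^\updown(k)$ all coincide with $\omega(k)$, i.e. that the linking number of a parallel push-off of a framed knot is its framing number --- to be the main obstacle, in the mild sense that it is the one step calling for an actual picture rather than bookkeeping; once it is granted, the three verifications above conclude. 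A more structural variant of the same argument would package the framing and linking numbers into an $\SCL$-algebra morphism $X\to(\bb{Z}^{\times 3},\bb{Z},\bb{Z},\bb{Z})$ and realize $(\LLhat^0,\KKhat,\KKhat,\KKhat)$ as the preimage of the one-point sub-$\SCL$-algebra, but proving that this morphism is well defined comes down to exactly the same additivity computations.
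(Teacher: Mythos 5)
Your proposal is correct and is in substance the paper's own argument: the paper makes $(\bb{Z}^{\times 2}, \bb{Z}, \bb{Z}, \bb{Z})$ a discrete $\SCL$-algebra via the three actions of $\bb{Z}$ on $\bb{Z}^{\times 2}$ (first factor, second factor, diagonal) and observes that the framing number is a morphism of $\SCL$-algebras --- exactly the ``structural variant'' you describe at the end, and your direct $\pi_0$ check is just an unpacking of it. If anything you are slightly more careful than the paper, which works with $\bb{Z}^{\times 2}$ and so, read literally, only tracks $\omega_\up$ and $\omega_\down$; since $\LLhat^0$ also requires the linking number of the two cores to vanish, your third coordinate (and the observation that a parallel push-off of $k$ links the core with linking number $\omega(k)=0$) is genuinely needed.
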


\begin{proof}
Consider the two monoids $\mathbb{Z}$ and $\mathbb{Z}^{\times 2}$.
The first one acts on the second one in three different ways: on the first
factor of $\mathbb{Z}^{\times 2}$,
on the second factor or diagonally. The data of these three actions is
precisely that of a $\pi_0\SCL$--algebra
structure on the quadruplet $(\mathbb{Z}^{\times 2}, \mathbb{Z}, \mathbb{Z},
\mathbb{Z})$. One can think of this structure
as an $\SCL$--algebra structure.
Thanks to the additive properties of the linking number with respect to
the concatenation of curves,
one easily checks that the framing number $\omega$ turns into a morphism
of $\SCL$--algebras.
The result follows. 
\end{proof}

This action of $\SCL$ on $(\LLhat^0, \KKhat, \KKhat, \KKhat)$
combines all the structure we have met on long knots and $2$--string links
so far.
Moreover, the isotopies exhibiting the commutativity relations discussed
in Subsection \ref{EmbeddingSpacesLink}
can all be obtained with paths in $\SCL$ from a configuration of cubes
to another.
The next section aims to show that this correspondence actually follows
from a deeper result:
a homotopy equivalence between $(\LLhat^0, \KKhat, \KKhat, \KKhat)$ and
a free algebra over $\SCL$. 

\begin{Rem}
It is possible to extend Theorem \ref{SCLActs} to manifolds other than
the $2$--dimensional disk $D^{2}$.
More precisely, when $M$ is a manifold of dimension $n$, we can consider
the space $EC(k, M)$ consisting of the
embeddings from $\mathbb{R}^{k}\times M$ to itself that restrict to the
identity outside of $J^{k}\times M$.
The notation $EC$'' comes from 
the terminology ``embedding'' and
``cubical''.
This space has been intensively studied by Budney and proved to be an
algebra over the $(k{+}1)$--dimensional little
cubes operad $\mathcal{C}_{k+1}$ in \cite{Budney}. 

Similarly, for any fixed embedding $\iota \co M \amalg M \cof M$, one can
define $EC_{\iota}(k, M)$, the space
consisting of embeddings from $\mathbb{R}^{k}\times M^{\amalg 2}$ to
$\mathbb{R}^{k}\times M$ that
restrict to $\id_{\mathbb{R}^k} \times \iota$ outside of $J^{k}\times
M^{\amalg 2}$ and map the interior of
$J^{k}\times M^{\amalg 2}$ to the interior of $J^{k}\times M$.
Burke and Koytcheff mentioned these spaces in \cite{KoytcheffPrime},
alongside their work on the special case corresponding to framed string
links. 

In order to understand the structure on the quadruplet of spaces
\[
X_M = \bigl( EC_{\iota}(k, M), EC(k, M), EC(k, M),EC(k, M) \bigr),
\]
we need a higher dimensional version of the Swiss-cheese operad for links.
Roughly speaking, one can define $\SCL_{k}$ the same way we did
$\mathcal{SCL}$,
except that the operad $\cubes{k}$ is used in the construction, instead
of the $2$--dimensional little cubes operad $\cubes{2}$.
We can then extend Budney's action on $EC(k, M)$ in order to get an
$\mathcal{SCL}_{k+1}$--algebraic structure on the quadruplet $X_{M}$.
The precise formula for this action is the same as the one introduced
before Theorem \ref{SCLActs},
and checking that is specifies a valid $\mathcal{SCL}_{k+1}$--algebra
structure boils down to the verifications already carried out in the
proof above.
\end{Rem}

\hypertarget{secfo}{}
\section{Freeness results}
\label{FreenessResults}

We prove here that the operadic actions constructed in Section \ref{OperadicActions} lead to
free algebras over different operads.
More precisely, we first introduce the main result of 
Budney in~\cite{Budney}, which states that
$\KKhat$ is homotopy equivalent as a $\cubes{2}$--algebra to
$\cubes{2}[\PPhat]$.
A second theorem proved by Burke and Koytcheff in \cite{KoytcheffInfect}
provides an analogous
statement about the action of $\cubes{1}$ on a subspace of $\LLhat$.
We then combine these results to prove the main theorem of this paper,
Theorem \ref{freeSCL},
stating that $(\LLhat^0, \KKhat, \KKhat, \KKhat)$ is homotopy equivalent
to a free $\SCL$--algebra.
These three theorems are proved with very similar methods,
most of them coming from $3$--manifold topology and homotopy theory.
A first paragraph recalls the concepts we need from these fields, and the
following three are each
dedicated to a freeness theorem. The proofs of the results of Budney,
Burke and Koytcheff are only quickly
outlined, since thorough treatments are available in~\cite{Budney} and~\cite{KoytcheffInfect}. 
We still dispense sketches of proofs as the 
arguments they involve will be useful for Theorem \ref{freeSCL}. 

\subsection{Notions of $3$--dimensional topology}
\label{FreenessResultsthDimensionalTopology}

We introduce some basic concepts of $3$--manifold theory. The instances of
$3$--manifolds we will encounter mostly lie
in $\mathbb{R}^3$, so they inherit very nice features. Furthermore, they
are compact, orientable, connected and irreducible.
It is very common when studying $3$--manifolds to deal with embedded surfaces:
we denote by $S^2$ the $2$--sphere,
$D^2$ the disk, $A$ the annulus, $T^2$ the torus and $(T^2)^{\hash 2}$
the genus $2$ oriented surface.
We denote by $P_n$ the $n$--punctured disk, whose boundary splits as an
external component
$\del_{\ext}P_n$ and $n$ internal components $\del_{\inter}P_n$. As for common
$3$--manifolds,
we note $B=J \times D^2$ the cylinder, homeomorphic to a $3$--ball $D^3$,
$H_n= P_n \times I$ the
$n$--handlebody and $C_f \subset B$ the complement of a fat long knots or
a fat $2$--string link $f$.
The boundary of $C_f$ is a torus when $f$ is a fat long knot, and a
$2$--torus when $f$ is a fat $2$--string link.
A recurring procedure in the upcoming proofs is the cutting of $C_f$
along essential surfaces.
We define the latter now.

\begin{Defn}
Let $S$ be a (not necessarily connected) orientable surface embedded in
an orientable $3$--manifold $M$
properly (ie $S \cap \del M=\del S$).
A disk $D \subset M$ with $D \cap S=\del D$ is said to be a
\emph{compressing disk for} $S$ if its boundary does not bound a disk in $S$.
A surface that admits a compressing disk is said to be \emph{compressible},
and a surface different from $S^2$ or $D^2$ admitting no compressing disk
is said to be \emph{incompressible}.
\end{Defn}

\begin{Defn}
Let $S$ be a bordered surface properly embedded in a $3$--manifold $M$.
A \emph{$\del$--compressing disk for} $S$ is a disk $D \subset M$
whose boundary consists of two arcs $\alpha$ and $\beta$ with $\alpha
\subset S$ and $\beta \subset \del M$,
whose interior is disjoint from $S$ and $\del M$,
such that there is no arc $\gamma$ in $\del S$ such that
$\gamma \cup \alpha$ bounds a disk in $S$. A surface that admits a
$\del$--compressing disk is said to be
\emph{$\del$--compressible}. Otherwise, it is \emph{$\del$--incompressible}.
\end{Defn}

\begin{Defn}
A properly embedded surface $S$ in a $3$--manifold $M$ is said to be
\emph{$\del$--parallel} if it can be isotoped
to a piece of $\del M$.
\end{Defn}

\begin{Defn}
A properly embedded orientable surface $S$ in a $3$--manifold $M$ is
\emph{essential} if
one of the following holds.
\begin{enumerate}[leftmargin=*, label=(\roman*), font=\itshape]
\item $S$ is a sphere and does not bound a ball.
\item $S$ is a disk whose boundary does not bound a disk in $\del M$.
\item $S$ is not a sphere nor a disk, it is incompressible,
$\del$--incompressible and not $\del$--parallel.
\end{enumerate}
\end{Defn}

Spaces of embeddings of incompressible surfaces have been
extensively studied by Hatcher in
\cite{HatcherInc}. He describes in his paper how the homotopy type of such
a space depends on $S$.
This result will be used repeatedly so we formulate a precise version here.

\begin{Thm}[Hatcher~\cite{HatcherInc}]
\label{htpyTypeInc}
Let $M$ be an orientable compact connected irreducible $3$--manifold and
$S \cof M$
an essential orientable compact surface in $M$.
Let $\Emb(S, M, \del S)$ be the space of embeddings of $S$ in $M$ whose
values at
$\del S$ are fixed. Then, the component $\Emb(S, M, \del S)_{S}$ of $S
\cof M$ in  $\Emb(S, M, \del S)$
is weakly contractible unless $S$ is closed and the fiber of a bundle
structure on $M$, or if $S$ is a torus.
In these exceptional cases $\pi_i \Emb(S, M) = 0 $ for all $i >1$. In the
bundle case,
the inclusion of the subspace consisting of embeddings with image a fiber
induces an isomorphism on $\pi_1$.
When $S$ is a torus but not the fiber of a bundle structure, the inclusion
$\Diff(S) \cof \Emb(S, M)$
obtained by precomposing $S \cof M$ induces an isomorphism on $\pi_1$.
\end{Thm}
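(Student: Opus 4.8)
The statement is Hatcher's, proved in \cite{HatcherInc}; the plan is to recall the shape of that argument, which is a parametrized refinement of the classical innermost-disk and boundary-compression techniques. First I would reduce the computation of $\pi_i\big(\Emb(S,M,\del S)_S\big)$ to a problem about deforming families of embeddings. Restriction gives a fibration $\Diff(M \text{ rel } S \cup \del M) \rightarrow \Diff(M, \del M) \rightarrow \Emb(S, M, \del S)_S$ (surjectivity onto the component of the inclusion coming from isotopy extension), so it suffices to show that an arbitrary smooth family $f : D^k \rightarrow \Emb(S,M,\del S)_S$ which is constant on $\del D^k$ can be deformed, rel $\del D^k$, back to the constant family --- except in the exceptional cases, where one must instead pin down exactly which classes survive.

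The core step is the parametrized surgery. Put the family in general position with respect to $S$, so that $f_t(S)$ meets $S$ transversally for generic $t$, with a Cerf-type list of tangencies and triple points over a stratified subset of $D^k$. Now invoke essentiality of $S$: since $S$ is incompressible, $\del$-incompressible, not $\del$-parallel, and not a sphere or disc that bounds trivially, every innermost circle or arc of $f_t(S) \cap S$ bounds a compressing or $\del$-compressing disc, and one removes such curves by a parametrized ambient isotopy, doing innermost pieces first and tracking the dependence on $t$ through the strata. The hypotheses on $S$ are precisely what make these surgeries available and force termination; the output is a family taking values in the orbit of the inclusion $S \cof M$ under $\Diff(S)$ acting by precomposition, i.e.\ $f$ becomes homotopic rel $\del D^k$ to one factoring through $\Diff(S) \rightarrow \Emb(S,M,\del S)_S$. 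This parametrized general-position-and-surgery argument is where the real work lies, and it is the step I expect to be the main obstacle: making the innermost-disc moves continuous (up to the Cerf strata) in the parameter, bounding the number of moves, and isolating the exceptional configurations.

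It then remains to analyze the residual $\Diff(S)$-freedom, together with the homotopy type of the diffeomorphism groups of $S$ and of the pieces of $M$ cut along $S$ (Earle--Eells/Gramain for surfaces, Hatcher--Ivanov for the complementary Haken $3$-manifolds). If $S$ is neither a torus nor the fiber of a fibration of $M$ over $S^1$ or $I$, then being essential and not $S^2$ or $D^2$ forces $\chi(S)<0$, so $\Diff(S \text{ rel } \del S)$ has contractible components and there is no sliding of $S$ along itself; the factored family collapses and $\Emb(S,M,\del S)_S$ is weakly contractible. If $S$ is a closed fiber of a bundle $M \rightarrow S^1$, sliding $S$ once around the base loop generates $\pi_1$, higher homotopy vanishes by the straightening argument, and the surgery argument identifies $\pi_1$ with the classes realized by fiber-embeddings. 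If $S$ is an incompressible, non-$\del$-parallel torus, the residual freedom is the $T^2$-rotation action, so $\pi_1\Emb(T^2,M)$ is the image of $\pi_1\Diff(T^2) \cong \pi_1 T^2 \cong \bb{Z}^2$; this map is injective because a nontrivial self-rotation of $S$ that were ambiently trivial would exhibit $S$ as compressible or as a fiber, contradicting the hypotheses, and again $\pi_i$ vanishes for $i>1$ by the straightening argument.
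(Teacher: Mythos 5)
First, a point of order: the paper does not prove this theorem at all. It is quoted verbatim from Hatcher's work on spaces of incompressible surfaces and used throughout as a black box (e.g.\ in Proposition \ref{cuttingProp} and Lemmas \ref{Lem3Discs} and \ref{Lem3Annuli}). There is therefore no in-paper argument to compare against, and your sketch has to be judged as a reconstruction of Hatcher's proof.

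As such a reconstruction, your outline has the right global shape but places all of the mathematical content in the step you yourself flag as ``the main obstacle'', and that is a genuine gap rather than a deferred routine verification. Two concrete remarks. First, the reduction via the fibration $\Diff(M\ \mathrm{rel}\ S \cup \del M) \rightarrow \Diff(M,\del M) \rightarrow \Emb(S,M,\del S)_S$ is sound (it is the same restriction fibration the paper invokes via McCullough), but note that the paper runs it in the opposite direction: it feeds the weak contractibility of $\Emb(S,M,\del S)_S$ \emph{into} the fibration to split $\Diff(M,\del M)$. If you wanted instead to read off $\pi_i\Emb(S,M,\del S)_S$ from the long exact sequence, you would need the homotopy types of both diffeomorphism groups for general Haken $M$ — the Hatcher--Ivanov theorem — which is itself proved by the same kind of parametrized Waldhausen-style argument; so the fibration buys you nothing and the entire weight falls on the surgery step. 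Second, the parametrized innermost-disc surgery over a Cerf-stratified family is exactly where the theorem lives, and ``doing innermost pieces first and tracking the dependence on $t$'' is not yet a construction: which curve of $f_t(S)\cap S$ is innermost jumps discontinuously in $t$, the compressions chosen on adjacent strata need not agree on overlaps, and organizing these choices coherently is the actual proof. Likewise, in the torus case the injectivity of $\bb{Z}^2 \cong \pi_1\Diff(T^2) \rightarrow \pi_1\Emb(T^2,M)$ does not follow from the one-line dichotomy you offer; it requires analysing when a rotation of $S$ about a core curve extends over an adjacent piece of $M$, which is precisely where the fiber and Seifert-fibered exceptions enter. Your outline is a fair description of the strategy, but as a proof it is open exactly at its centre; the honest course, and the one the paper takes, is to cite Hatcher.
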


Another tool of $3$--manifold theory that will come in handy is the
JSJ--decomposition.
It provides a way to cut an irreducible manifold into simpler ones. The
cuts are performed along essential tori,
but if one keeps on cutting a manifold until no such torus is available,
the obtained decomposition might
not be unique. A manifold that admits no essential torus is said to be
\emph{atoroidal}.
In order to get a unique decomposition, one must agree not to cut the
pieces that are
Seifert-fibered. The latter are manifolds consisting of disjoint parallel
circles forming a
particular fibering. The precise definition of this fibering is looser
than the notion of fiber bundle with fiber
$S^1$. It is specified for example in \cite{Hatcher3M}. The decomposition
theorem we use is the following:

\begin{Thm}[Jaco, Shalen and Johannson~\cite{JS,J}]
\label{JSJDecomp}
Every orientable, compact, irreducible $3$--manifold $M$ contains a collection
of embedded,
incompressible tori $T$ so that if one removes an open tubular neighborhood
of
$T$ from $M$, the outcome is a disjoint union of Seifert-fibered and
atoroidal manifolds.
Moreover, a minimal collection of such tori is unique up to isotopy.
\end{Thm}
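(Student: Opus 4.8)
The plan is to reduce the statement to two classical ingredients — a finiteness theorem for incompressible surfaces and an exchange argument for essential tori — following the streamlined modern account (Neumann--Swarup, Matveev) rather than the original characteristic-submanifold machinery of Jaco--Shalen and Johannson.

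The technical heart, and the step I expect to be the main obstacle, is the Kneser--Haken finiteness theorem: in a compact irreducible $3$-manifold $M$ there is an integer $h(M)$ bounding the number of components in any system of disjoint, pairwise non-parallel, incompressible surfaces. I would prove it by fixing a triangulation of $M$, putting the surfaces in normal form with respect to it, and observing that as soon as the number of components exceeds a bound depending only on the number of tetrahedra, two of them are normally parallel and hence cobound an $I$-bundle, so are isotopic in $M$. The argument is entirely classical but genuinely delicate, and all the later steps sit comfortably on top of it.

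For existence, I would use finiteness to choose a maximal collection $T=\{T_1,\dots,T_n\}$ of disjoint essential tori in $M$, no two isotopic and none boundary-parallel, and cut $M$ along $T$ into pieces $M_1,\dots,M_k$. Each $M_i$ is then atoroidal or Seifert-fibred: an essential torus $S$ in $M_i$ that is not boundary-parallel there would, by maximality of $T$, become isotopic in $M$ to a component of $T$ or boundary-parallel in $M$, and following this isotopy back into $M_i$ forces $M_i$ to be Seifert-fibred (the only delicate sub-case being a twisted $I$-bundle over the Klein bottle, absorbed by a short case analysis). Replacing $T$ by a collection that is in addition minimal with the property that every complementary piece is atoroidal or Seifert-fibred then produces the tori of the statement.

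For uniqueness, let $T$ and $T'$ be two such minimal collections, isotoped to meet transversely in as few circles as possible. Incompressibility of the tori lets one cancel any intersection circle that bounds a disc on either surface, so $T\cap T'$ consists of circles essential on both; if this set is non-empty, some component of $T'$ meets a piece $M_i$ of the $T$-decomposition in an essential annulus $A$. But in $M_i$ — atoroidal or Seifert-fibred — the annulus $A$ is either boundary-parallel or isotopic to a vertical or horizontal annulus, and in each case $A$ can be pushed across to reduce $|T\cap T'|$, a contradiction. Hence $T$ and $T'$ are disjoint; each torus of $T'$ then lies in a single $T$-piece, where, being essential, it must be boundary-parallel, hence isotopic to a component of $T$ (the alternative, that it exhibits the piece as Seifert-fibred, is ruled out by minimality of $T$). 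Running the symmetric argument and invoking minimality of both collections assembles these isotopies into one carrying $T'$ onto $T$.
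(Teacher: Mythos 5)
The paper gives no proof of this theorem: it is imported as a classical black box with references to Jaco--Shalen and Johannson, and the surrounding text only fixes the terminology (root, base-level tori) needed later. So your sketch has to be judged on its own terms. You follow the standard modern route --- Kneser--Haken finiteness plus an intersection-exchange argument, essentially the proof in Hatcher's notes on $3$-manifolds --- which is a sensible strategy, but as written it has a genuine gap at the decisive step of uniqueness, and the existence step is stated backwards.

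Cutting along a maximal collection of disjoint, pairwise non-parallel essential tori yields pieces that are all atoroidal: if $S$ were essential and not boundary-parallel in a piece $M_i$, it could not be isotopic in $M$ to a component of $T$, since the product region realizing that isotopy would either lie inside $M_i$ (making $S$ boundary-parallel there) or contain a component of $T$ parallel to the one at its other end; hence $T\cup\{S\}$ would contradict maximality. Nothing ``forces $M_i$ to be Seifert-fibred'' at this stage --- Seifert pieces only appear after passing to a minimal collection --- and while this confusion is harmless for existence, it undermines your uniqueness argument. There you assert that a component of $T'$ lying in a piece $M_i$ of the $T$-decomposition, being essential, ``must be boundary-parallel.'' That is false precisely when $M_i$ is Seifert-fibred: such a piece typically contains infinitely many pairwise non-isotopic essential vertical tori, none boundary-parallel, and minimality of $T$ does not exclude them; their presence is the very reason Seifert pieces are left uncut. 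What has to be proved is that if a component of $T'$ is such a torus then $T'$ is not minimal, and this requires showing that deleting it merges the two adjacent pieces of $M$ cut along $T'$ into a manifold that is still Seifert-fibred (the fibration extends across a vertical torus), together with a case analysis of the small Seifert manifolds admitting several non-isotopic fibrations. That, rather than Kneser--Haken finiteness, is the technical heart of JSJ uniqueness, and it is absent. The annulus-exchange step has the same soft spot: when the annulus is vertical or horizontal in a Seifert piece rather than boundary-parallel, ``pushing it across'' does not by itself reduce the number of intersection circles.
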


The minimal collection of tori $T$ from Theorem \ref{JSJDecomp}
(or sometimes its isotopy class) is called the JSJ--decomposition of $M$.
In the case where $\del M$ consists of a single component, the piece of
the cut $M$ containing $\del M$
is called the root of the decomposition. The tori of $T$ bounding the root
are referred to as the
base-level tori of $T$. 

Our main concern while studying an orientable compact $3$--manifold $M$
will actually be the homotopy type
of the group of its boundary-fixing diffeomorphisms $\Diff(M, \del M)$.
More precisely, we are interested of the subgroup $\Diffd(M, \del M)$
consisting of the diffeomorphisms
whose derivatives at $\del M$ agree with those of the identity.
This extra condition is relevant for our work because it enables one to
postcompose a fat long knot by an element
of $\Diffd(B, \del B)$ and still end up with a
fat long knot.
The main ingredient we use to prove the three upcoming freeness theorems
is the
following proposition:

\begin{Prop}
\label{cuttingProp}
Let $M$ be an orientable compact connected irreducible $3$--manifold and $S$
an essential surface 
which cuts $M$
into pieces $M_i$, such that the component $\Emb(S, M, \del S)_S$ is weakly
contractible and stable under the
postcomposition action of\/ $\Diff(M, \del M)$. Then the inclusion
\[
\prod_i \Diffd(M_i, \del M_i) \cof \Diffd(M, \del M)
\]
is a weak homotopy equivalence.
\end{Prop}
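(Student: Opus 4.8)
The plan is to build the weak equivalence from a fibration sequence obtained by restricting diffeomorphisms to the cutting surface $S$. First I would consider the restriction map
\[
\rho : \Diffd(M, \del M) \longrightarrow \Emb(S, M, \del S)_S, \qquad \phi \longmapsto \phi \circ \iota_S,
\]
where $\iota_S$ is the inclusion of $S$, which lands in the component $\Emb(S, M, \del S)_S$ precisely because the latter is assumed stable under the post-composition action of $\Diff(M, \del M)$ (and $\Diffd(M,\del M)$ is a subgroup). By the isotopy extension theorem, $\rho$ is a locally trivial fibration onto its image, which is the whole component $\Emb(S, M, \del S)_S$ since $\Diffd(M,\del M)$ acts transitively on it up to isotopy — here one uses that any embedding isotopic to $\iota_S$ can be realised by an ambient isotopy that is the identity on $\del M$ to first order, which is possible because $S$ is essential and $M$ is irreducible. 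The fibre over $\iota_S$ is the subgroup $\Diffd(M \text{ rel } S, \del M)$ of diffeomorphisms fixing $S$ pointwise (with standard derivative conditions along $\del M$ and along $S$). Since $\Emb(S, M, \del S)_S$ is weakly contractible by hypothesis, the long exact sequence of the fibration gives that the fibre inclusion
\[
\Diffd(M \text{ rel } S, \del M) \cof \Diffd(M, \del M)
\]
is a weak homotopy equivalence.

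Next I would identify the fibre with the product over the pieces. A diffeomorphism of $M$ fixing $S$ pointwise (and with the right jet data along $\del M$) restricts to a diffeomorphism of each closure $\overline{M_i}$ fixing $\del M_i$ pointwise, and conversely such a tuple glues to a diffeomorphism of $M$ fixing $S$; the gluing is smooth because each $M_i$-diffeomorphism is the identity to infinite order along the copies of $S$ in $\del M_i$, which follows from our convention that $\Diffd$ means the derivatives match those of the identity along the fixed boundary. This bijection is a homeomorphism in the $C^\infty$-topology, so
\[
\Diffd(M \text{ rel } S, \del M) \;\cong\; \prod\nolimits_i \Diffd(M_i, \del M_i),
\]
and under this identification the inclusion in the statement is exactly the fibre inclusion above. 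Composing with the weak equivalence from the fibration sequence gives the result.

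The main obstacle I expect is the surjectivity of $\rho$ onto the full component $\Emb(S,M,\del S)_S$ together with its local triviality in the $C^\infty$-category with the prescribed jet conditions along $\del M$; that is, one must upgrade Hatcher's homotopy-theoretic statement (Theorem~\ref{htpyTypeInc}) to an actual lifting of paths and families of embeddings by ambient isotopies that are stationary to infinite order on $\del M$. This is a parametrised isotopy-extension argument: given a family of embeddings of $S$ agreeing with $\iota_S$ near $\del S$, one extends the generating vector fields to compactly supported ambient vector fields vanishing near $\del M$ and integrates. The hypothesis that $\Emb(S,M,\del S)_S$ is stable under the $\Diff(M,\del M)$-action is what guarantees these ambient extensions can be chosen to stay in $\Diffd(M,\del M)$, closing the loop. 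A secondary point requiring care is that when $S$ is disconnected, one applies the above one component of $S$ at a time (or all at once via a multi-jet transversality argument), and that cutting along an essential $S$ preserves irreducibility of the pieces $M_i$, which is standard for $S^2$-incompressible surfaces in irreducible manifolds.
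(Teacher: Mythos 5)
Your overall strategy --- restrict diffeomorphisms to $S$, use local triviality of the restriction map together with the weak contractibility of $\Emb(S,M,\del S)_S$ to reduce to the fibre, then match the fibre with the product over the pieces --- is the same as the paper's. However, there is a genuine gap at the identification of the fibre. The fibre of $\rho$ over $\iota_S$ is, by definition, the group of diffeomorphisms of $M$ (smooth \emph{across} $S$) that fix $S$ pointwise; no derivative condition along $S$ comes for free. You smuggle in the phrase ``with standard derivative conditions along $\del M$ and along $S$'' when describing the fibre, and it is exactly this insertion that later lets you claim the fibre splits as $\prod_i \Diffd(M_i,\del M_i)$. Without it the fibre is strictly larger: an element may fix $S$ pointwise while shearing the normal directions along it, and conversely a tuple of diffeomorphisms of the pieces fixing their boundaries only pointwise need not glue to a smooth diffeomorphism of $M$. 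Establishing that the inclusion
\[
\prod\nolimits_i \Diffd(M_i,\del M_i) \cof \bigl\{\phi \in \Diff(M,\del M) : \phi_{|S} = \id_S\bigr\}
\]
is a weak homotopy equivalence is the real technical content of the proposition, and your argument assumes it rather than proving it.

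The paper closes precisely this gap by a detour through homeomorphism groups: one has honest equalities $\Diffd(M,\del M \cup S) = \prod_i \Diffd(M_i,\del M_i)$ and $\Homeo(M,\del M \cup S) = \prod_i \Homeo(M_i,\del M_i)$ (gluing is unproblematic for homeomorphisms, and for diffeomorphisms whose derivatives along the cut agree with those of the identity), while the inclusions $\Diffd \cof \Diff \cof \Homeo$ are weak equivalences by Kupers' jet lemma and by Cerf's smoothing theory in dimension $3$; two applications of the two-out-of-three rule then transfer the product decomposition to the fibre $\Diff(M,\del M\cup S)$ and finally to $\Diffd(M,\del M)$. Your proof needs some replacement for this step --- either that comparison, or a direct deformation argument showing that fixing $S$ pointwise and fixing its $\infty$-jet yield weakly equivalent groups. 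The remaining points you raise (surjectivity of $\rho$, isotopy extension, irreducibility of the pieces) are fine but not where the difficulty lies; surjectivity onto the component is automatic for a nonempty fibration over a path-connected base.
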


\begin{proof}
Thanks to the stability condition, we have a well-defined postcomposition
map
$\Diff(M, \del M) \rightarrow \Emb(S, M, \del S)_S$.
Restriction maps such as this one have been shown to be locally trivial,
and in particular fibrations. This problem, as well as the local triviality
of the restriction map
$\Emb(M, N) \rightarrow \Emb(M^\prime, N)$ for a submanifold $M^\prime
\subset M$,
has been treated in several articles: in \cite{Palais} for the case of
closed manifolds and in
\cite{Cerf} for the case of bordered manifolds. A more recent exposition
is provided in the third section
of \cite{McCullough}: the precise result we use here is formulated as
Corollary~3.7 in \cite{McCullough}.
The fiber over $S$ of this map is the subgroup of diffeomorphisms fixing
$S$, ie
\[
\begin{tikzcd}
[row sep=tiny, column sep=small]
\Diff(M, \del M \cup S)
\arrow[r, hook]
&\Diff(M, \del M)
\arrow[r]
&\Emb(S, M, \del S)_S.
\end{tikzcd}
\]
The base space is weakly contractible so the inclusion of the fiber is a
weak homotopy equivalence.
We are left to add the derivative condition on the diffeomorphisms.  

It is proved in Kupers' book on diffeomorphism groups \cite{Kupers}
that the inclusion of the subgroup
$\Diffd(N, \del N) \cof \Diff(N, \del N)$ is a weak homotopy equivalence
for every compact manifold $N$.
This justifies the bottom left equivalence in the diagram of inclusions
\[
\begin{tikzcd}
\Diffd(M, \del M \cup S) \arrow[r, hook] \arrow[d,equal]
&\Diff(M, \del M \cup S) \arrow[r, hook, "\simeq"]
&\Homeo(M, \del M \cup S) \arrow[d, equal] \\
\prod_i \Diffd(M_i, \del M_i) \arrow[r, hook, "\simeq"]
&\prod_i \Diff(M_i, \del M_i) \arrow[r, hook, "\simeq"]
&\prod_i\Homeo(M_i, \del M_i)
\end{tikzcd}
\]
while the right horizontal equivalences come from works of Cerf in \cite{Cerf}.
The two-out-of-three rule assures us that the top left inclusion is a weak
equivalence as well.
This same rule in the diagram
\[
\begin{tikzcd}
\Diffd(M, \del M \cup S) \arrow[r, hook] \arrow[d, hook, "\simeq"]
&\Diffd(M, \del M) \arrow[d, hook, "\simeq"] \\
\Diff(M, \del M \cup S) \arrow[r, hook, "\simeq"]
&\Diff(M, \del M)
\end{tikzcd} 
\]
concludes the proof. 
\end{proof}

The need to study these diffeomorphism groups arises from the following
classical result in modern
knot theory:

\begin{Prop}
\label{BDiff}
Let $f$ be a fat long knot or a fat $2$--string link. Then the component
$\KKhat_f$ or $\LLhat_f$ of $f$
in $\KKhat$ or $\LLhat$ is a model for the classifying space of\/ $\Diffd(C_f,
\del C_f)$.
Moreover, it is a $K(G, 1)$.
\end{Prop}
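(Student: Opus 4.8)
The plan is to realise $\KKhat_f$ (resp.\ $\LLhat_f$) as the base of a principal $\Diffd(C_f,\del C_f)$-bundle with contractible total space. Write $B=J\times D^2$, a smooth $3$-ball. A fat long knot is determined by its restriction to $J\times D^2$, an embedding $\bar f\colon J\times D^2\cof B$ that agrees with the standard inclusion on $\del J\times D^2$, sends the interior into the interior, and has framing number $0$; so $\KKhat_f$ is a path-component of the space $\mathcal E$ of all such embeddings (for a $2$-link one uses embeddings of $(J\times D^2)^{\amalg 2}$, the image $N_f$ being a disjoint pair of tubes). The group $G=\Diffd(B,\del B)$ acts on $\mathcal E$ by post-composition $\phi\cdot e=\phi\circ e$, extended by the identity outside $B$; since $G$ is path-connected and the framing number is a locally constant isotopy invariant, this preserves $\KKhat_f$. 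The rest of the argument is identical for knots and $2$-links, with ``torus'' replaced by ``$2$-torus''.

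First I would show the orbit $G\cdot f$ is all of $\KKhat_f$: for a path $(f_t)$ in $\KKhat_f$ from $f$ to $f_1$, the isotopy extension theorem for embeddings rel $\del J\times D^2$ gives an ambient isotopy $(\Phi_t)$ of $B$ supported away from $\del B$, hence in $G$, with $\Phi_t\circ\bar f=\bar f_t$, so $\Phi_1\cdot f=f_1$. Next, the orbit map $G\to\KKhat_f$, $\phi\mapsto\phi\circ f$, is a locally trivial fibration, by the same local triviality of restriction maps $\Diff(B,\del B)\to\Emb(N,B)$ invoked in the proof of Proposition~\ref{cuttingProp} (Corollary~3.7 of \cite{McCullough}). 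Its fibre over $f$ is the stabiliser, consisting exactly of the diffeomorphisms of $B$ that restrict to the identity on the tube $N_f=\bar f(J\times D^2)$ and agree to first order with the identity along $\del B$; restriction to $C_f=\overline{B\setminus N_f}$ identifies this subgroup with $\Diffd(C_f,\del C_f)$ (and conversely every element of $\Diffd(C_f,\del C_f)$ extends by the identity on $N_f$). Here one checks that $\del C_f$ is the torus assembled from the relevant parts of $\del B$ and $\del N_f$, that the identity-on-$N_f$ condition supplies the derivative matching along the inner boundary, and that membership in $\Diffd(B,\del B)$ supplies it along the outer one. This produces a principal bundle
\[
\Diffd(C_f,\del C_f)\hookrightarrow\Diffd(B,\del B)\longrightarrow\KKhat_f .
\]

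Since $B\cong D^3$, Hatcher's solution of the Smale conjecture gives that $\Diff(B,\del B)$ is contractible, and $\Diffd(B,\del B)\hookrightarrow\Diff(B,\del B)$ is a weak equivalence by \cite{Kupers}; thus the total space is weakly contractible and $\KKhat_f\simeq B\Diffd(C_f,\del C_f)$, proving the first assertion. The long exact sequence of the fibration gives $\pi_n\KKhat_f\cong\pi_{n-1}\Diffd(C_f,\del C_f)$, so $\KKhat_f$ is aspherical exactly when $\Diffd(C_f,\del C_f)$ has weakly contractible components — equivalently, again by \cite{Kupers}, when $\Diff(C_f,\del C_f)$ does — in which case $\KKhat_f$ is a $K(\pi_0\Diffd(C_f,\del C_f),1)$. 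Now $C_f$ is the exterior of a knot, resp.\ a $2$-component string link, in a ball: it is compact, orientable, irreducible (a $2$-sphere in $C_f$ bounds a ball in $B$, which must miss $N_f$ as $N_f$ meets $\del B$), has toral boundary, and is Haken, containing an incompressible, $\del$-incompressible surface such as a properly embedded Seifert surface of a strand. For Haken $3$-manifolds with non-empty boundary the diffeomorphism group rel boundary has weakly contractible components by the theory developed by Hatcher and others, together with geometrisation; only a short list of elementary classes (whose complement is a small Seifert-fibred piece or a boundary connect-sum of solid tori, such as the unknot or the simplest split and double-cable links) is exceptional, and these are handled directly via the equivalences $\KKhat\simeq\KK$, $\LLhat\simeq\LL$ and classical facts such as the contractibility of the space of long unknots.

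The principal-bundle construction is soft; the main obstacle is this last step — establishing that $\Diff(C_f,\del C_f)$ has weakly contractible components, which invokes the full modern theory of diffeomorphism groups of Haken $3$-manifolds and requires separately disposing of the finitely many exceptional complements. One should also be careful that the first-order boundary condition adds no homotopy (it does not, by \cite{Kupers}) and that the stabiliser in the fibration is precisely $\Diffd(C_f,\del C_f)$, including along the shared boundary of $B$ and $N_f$.
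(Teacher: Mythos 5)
Your proof follows the same route as the paper's: the restriction fibration $\Diffd(C_f,\del C_f)\hookrightarrow\Diffd(B,\del B)\rightarrow\KKhat_f$ (local triviality from Corollary 3.7 of \cite{McCullough}), weak contractibility of the total space from the Smale conjecture \cite{HatcherSmale} together with $\Diffd\simeq\Diff$, and the long exact sequence for the $K(G,1)$ statement. The only divergence is that the paper settles asphericity in one stroke by citing Theorem 2 of \cite{HatcherInc} (diffeomorphism groups of orientable Haken bordered $3$-manifolds fixing the boundary have vanishing higher homotopy), so the case-by-case treatment of ``exceptional'' complements you hedge about is unnecessary: every $C_f$ here is compact, orientable, irreducible with nonempty boundary, hence Haken, and the theorem applies uniformly.
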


\begin{proof}
We treat the case where $f$ is a fat long knot, the other one is treated
identically.
When $B$ is the solid cylinder and $C_f$ the complement of $f$ in $B$,
we have the inclusion and restriction maps
\[
\begin{tikzcd}
[row sep=tiny, column sep=small]
\Diffd(C_f, \del C_f)
\arrow[r, hook]
&\Diffd(B, \del B)
\arrow[r]
&\KKhat_f.
\end{tikzcd}
\]
The application on the right-hand side precomposes a diffeomorphism by $f$
and is a fibration thanks to \cite[Corollary~3.7]{McCullough}.
Now, $\Diffd(B, \del B) \simeq \Diff(B, \del B)$ is weakly contractible,
as proved in \cite{HatcherSmale}.
Thus, $\Diffd(C_f, \del C_f)$ acts properly and freely on a contractible
space, and the quotient of this action
can be identified to $\KKhat_f$, so $\KKhat_f$ is a model for $B\Diffd(C_f,
\del C_f)$.
Moreover, spaces of diffeomorphisms of orientable Haken bordered
$3$--manifolds which preserve the
boundary pointwise always have vanishing higher homotopy groups by \cite[Theorem~2]{HatcherInc}.
Thus, the long exact sequence in homotopy coming from the fibration above
assures us that
$\KKhat_f$ is a $K(\pi_0 \Diffd(C_f, \del C_f), 1)$. 
\end{proof}

\subsection{Budney's freeness theorem}

We now recall Budney's main theorem in \cite{Budney}, which is a freeness
statement about the space of fat long
knots as a $\cubes{2}$--algebra. Recall from Subsection \ref{EmbeddingSpacesKnot}
that the prime fat long knots are denoted by $\PPhat \subset \KKhat$,
and that they are the embeddings whose isotopy classes are prime elements
in the monoid $\pi_0 \KKhat$.
The result we present appears as Theorem~11 in Budney's paper, and we only
dispense a
sketch of proof here, as the complete proof is fairly long.

\begin{Thm}[Budney~\cite{Budney}]
\label{BudneyFreeness}
The restriction of the structure map
\[
\kappa \co \cubes{2}[\PPhat] \rightarrow \KKhat
\]
from Theorem \ref{CtwActsOnEC-on-Dtw-} is a homotopy equivalence.
\end{Thm}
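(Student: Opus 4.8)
The strategy is to show that $\kappa : \cubes{2}[\PPhat] \to \KKhat$ is a weak homotopy equivalence and then invoke Whitehead's theorem (both sides have the homotopy type of CW-complexes). Since $\pi_0 \kappa$ is a bijection by Corollary~\ref{KnotStructureThm} together with Proposition~\ref{pi0FreenessCommute}, it suffices to prove that $\kappa$ restricts to a weak equivalence on each connected component. Fix a fat long knot $f \in \KKhat$; its isotopy class has a unique prime decomposition $[f] = [p_1] \varhash \cdots \varhash [p_n]$ with $p_i \in \PPhat$. The component of $\cubes{2}[\PPhat]$ mapping to $\KKhat_f$ is, by the explicit model of the free $\cubes{2}$-algebra, homotopy equivalent to $\cubes{2}(n) \times_{\Sigma_n} \bigl(\PPhat_{p_1} \times \cdots \times \PPhat_{p_n}\bigr)$, where the product runs over the chosen representatives and $\Sigma_n$ acts by permuting equal prime factors. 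So the task reduces to identifying the homotopy type of $\KKhat_f$ with this ``configuration-of-primes'' space.

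**Key steps.** First, by Proposition~\ref{BDiff}, $\KKhat_f \simeq B\Diffd(C_f, \del C_f)$ is a $K(G,1)$, and similarly each $\PPhat_{p_i} \simeq B\Diffd(C_{p_i}, \del C_{p_i})$. Second, one analyzes $C_f$ via its $JSJ$-decomposition (Theorem~\ref{JSJDecomp}): the prime decomposition of the knot corresponds geometrically to cutting $C_f$ along a canonical collection of essential \emph{swallow-follow tori}, one for each prime factor, arranged in parallel in the root piece, which is itself (a thickened) $n$-punctured-disc-times-interval handlebody $H_n$. Concretely, the root of the $JSJ$-decomposition of $C_f$ is homeomorphic to $H_n$, and the pieces hanging off the base-level tori are exactly the $C_{p_i}$. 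Third, one applies Proposition~\ref{cuttingProp} iteratively: cutting along the essential surface consisting of these tori (whose embedding spaces have the homotopy types controlled by Theorem~\ref{htpyTypeInc}) yields a weak equivalence
\[
\Diffd(H_n, \del H_n) \times \prod\nolimits_i \Diffd(C_{p_i}, \del C_{p_i}) \;\hookrightarrow\; \Diffd(C_f, \del C_f),
\]
after checking the required stability of the relevant components under post-composition. Fourth, one identifies $B\Diffd(H_n, \del H_n)$ (equivalently the ``space of configurations of $n$ prime slots'') with $\cubes{2}(n) \times_{\Sigma_n} (\text{pt})^n$ up to the $\Sigma_n$-equivariance bookkeeping; this is where Budney's computation of the homotopy type of diffeomorphism groups of handlebodies-with-marked-tori enters, and it is what produces the $\cubes{2}$-operad shape rather than merely an $A_\infty$ or $E_2$ abstraction. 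Finally, passing to classifying spaces and reassembling the product gives $\KKhat_f \simeq \cubes{2}(n) \times_{\Sigma_n} \prod_i \PPhat_{p_i}$, and one checks this identification is induced by $\kappa$, not just an abstract equivalence.

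**Main obstacle.** The crux is the third and fourth steps: correctly identifying the $JSJ$-decomposition of $C_f$ and proving that the combinatorics of the swallow-follow tori in the root handlebody, together with the action of $\Diffd(H_n, \del H_n)$, assembles precisely into the little $2$-cubes structure — i.e.\ that $B\Diffd(H_n, \del H_n) \simeq \cubes{2}(n)$ equivariantly and compatibly with operadic composition. This requires Hatcher's results on diffeomorphism groups of Haken manifolds and a careful check that the surfaces being cut along satisfy the essentiality and stability hypotheses of Proposition~\ref{cuttingProp} (in particular that no prime factor is itself a torus knot whose complement is Seifert-fibred with a troublesome fiber, which is handled by the bundle/torus exceptions in Theorem~\ref{htpyTypeInc}). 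The matching of the operadic composition on both sides — that gluing a satellite pattern corresponds to operadic $\circ_i$ in $\cubes{2}$ — is the most delicate bookkeeping, but it is exactly the content Budney verifies in \cite{Budney}, so here one only sketches it.
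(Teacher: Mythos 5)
Your overall skeleton (reduce to components via the $\pi_0$ bijection, realize $\KKhat_f$ as $B\Diffd(C_f,\del C_f)$, cut the complement along the $JSJ$ tori, reassemble, finish with Whitehead) is the same as the paper's. But the heart of your argument, steps three and four, contains a genuine error. First, the root of the $JSJ$-decomposition of $C_f$ for a composite $f=f_1\hash\cdots\hash f_n$ is homeomorphic to $S^1\times P_n$, not to the handlebody $H_n=P_n\times I$. Second, and more seriously, $\Diffd(H_n,\del H_n)\simeq\Diff(H_n,\del H_n)$ is \emph{contractible} (the paper uses exactly this for $H_1$ and $H_2$ on the unknot and unlink components), so $B\Diffd(H_n,\del H_n)$ cannot be $\cubes{2}(n)\simeq\conf_n(J^2)$. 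With your claimed splitting you would obtain $\KKhat_f\simeq\prod_i\KKhat_{p_i}$ and lose the configuration-space factor entirely; already for $f$ a sum of two distinct primes this misses the $S^1\simeq\conf_2(J^2)$ worth of topology in $\KKhat_f$. The little-cubes factor does not come from a $3$-dimensional handlebody: it comes from the pure braid group $KB_n$, which enters as (a group $\KDiff(P_n,\del P_n)$ homotopy equivalent to) the mapping class group of the $2$-dimensional punctured disc sitting inside the root piece $S^1\times P_n$, via $BKB_n=\conf_n(J^2)\simeq\cubes{2}(n)$.

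Relatedly, you cannot "apply Proposition \ref{cuttingProp} iteratively" to the collection of base-level tori: its hypotheses fail here, and this failure is precisely the source of the braid group. The component $\Emb(T^2\amalg\cdots\amalg T^2,C_f)_T$ is not weakly contractible — tori are exactly the exceptional case of Theorem \ref{htpyTypeInc}, with nontrivial $\pi_1$ — and it is not stable under post-composition by $\Diff(C_f,\del C_f)$ whenever two prime factors are isotopic, since such diffeomorphisms permute the corresponding tori. The paper's sketch circumvents this by taking the union of components indexed by the allowed permutations $\Sigma_f$, quotienting by the parametrizations of the tori, and running a variant of the fibration argument; this is what produces the splitting $\KDiff(P_n,\del P_n)\times_{\Sigma_f}\prod_i\Diffd(C_{f_i},\del C_{f_i})$ and hence $\cubes{2}(n)\times_{\Sigma_f}\prod_i\KKhat_{f_i}$ after applying $B$. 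Your proposal as written skips the one step that actually generates the $E_2$-structure, so the identification with the free $\cubes{2}$-algebra does not go through.
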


\begin{proof}[Sketch of Proof]
Thanks to Theorems \ref{KnotStructureThm}, \ref{pizeFreenessCommute} and the fact
that applying $\pi_0$ to the structure map $\kappa\co \cubes{2} \rightarrow
\EE_{\KKhat}$ endows
$\pi_0\KKhat$ with its usual monoid structure, we are assured that $\kappa$
induces a bijection on components.
Thus, we are left to prove that it is a homotopy equivalence on each of
these components. 

On the component of the unknot, $\kappa$ restricts to the map
$\cubes{2}(0) \times \PPhat^{\times 0} \rightarrow \KKhat_{\id_{\mathbb{R}
\times D^2}}$.
The complement of an unknot is a $1$--handlebody $H_1$, and the diffeomorphism
group
$\Diffd(H_1, \del H_1) \simeq \Diff(H_1, \del H_1)$ is contractible.
Proposition \ref{BDiff} therefore gives the contractibility of
$\smash{\KKhat_{\id_{\mathbb{R} \times D^2}}} = B\Diffd(H_1, \del H_1)$ 
so we have an equivalence in this case. 

On the component of a prime knot $f \in \PPhat$, $\kappa$
restricts to the map
$\cubes{2}(1) \times \PPhat_f \rightarrow \PPhat_f=\KKhat_f$. The homotopy
retracting $\cubes{2}(1)$
onto the identity little cube shows that this map is an equivalence as
well. 

Suppose now that $f$ is a composite knot $f = f_1 \hash \cdots
\hash f_n$.
Budney proved in \cite{BudneyJSJ} that the base-level tori $T$ of the
JSJ--decomposition
of the complement of $f$ split $C_f$ into $n+1$ pieces: the complements
of the prime factors $C_{f_i}$ and
a root homeomorphic to $S^1 \times P_n$. One would like to apply Proposition \ref{cuttingProp} in order to
split $\Diffd(C_f, \del C_f)$ as a product of diffeomorphism groups
involving each
$\Diffd(C_{f_i}, \del C_{f_i})$. But, cutting along $T$ is not possible
here as the component
$\Emb(T^2 \amalg \cdots \amalg T^2, C_f)_T$ is not contractible by Theorem \ref{htpyTypeInc}.
It is also not stable under the postcomposition action of $\Diff(C_f,
\del C_f)$.
Indeed, when $T_i$ is the torus bounding $C_{f_i}$, two tori $T_i$ and $T_j$
can be permuted
by some diffeomorphism of $C_f$ if and only if $f_i$ and $f_j$ are
isotopic. Let $\Sigma_f$
be the subgroup of $\Sigma_n$ preserving the partition of $\{1, \dots,
n\}$ given by $i \sim j$
if and only if $f_i$ and $f_j$ are isotopic. Then, by considering
the components of $\Emb(T^2 \amalg \cdots \amalg T^2, C_f)$ where the
image of the $i^{\text{th}}$ torus
is isotopic to a $T_j$ with $i \sim j$, and by quotienting out these
components
by the parametrization of each torus, one gets a space homotopy equivalent
to $\Sigma_f$.
It is stable under the postcomposition action of $\Diff(C_f, \del C_f)$
so one can use an argument
similar to the proof of Proposition \ref{cuttingProp} to split $\Diffd(C_f,
\del C_f)$ into
a product involving each $C_{f_i}$.
Namely, after some manipulations on the diffeomorphisms, Budney manages
to fit $\Diffd(C_f, \del C_f)$
up to homotopy in a fibration
\[
\KDiff(P_n, \del P_n) \times \prod_i \Diffd(C_{f_i}, \del C_{f_i})
\rightarrow
\Diffd(C_f, \del C_f) \rightarrow
\Sigma_f,
\]
for some subgroup $\KDiff(P_n, \del P_n)$ homotopy equivalent to the pure
braid group on $n$ strands
$KB_n$. Since $BKB_n = \conf_n(J^2) \simeq \cubes{2}(n)$,
applying the classifying space functor $B$ leads to
\begin{align*}
\KKhat_f & \simeq B \Diffd(C_f, \del C_f) \\
& \simeq B \KDiff(P_n, \del P_n)
\times_{\Sigma_f} \prod_i B\Diffd(C_{f_i},
\del C_{f_i}) \\
& \simeq \cubes{2}(n) \times_{\Sigma_f}
\prod_i \KKhat_{f_i} = \cubes{2}[\PPhat]_f.
\end{align*}
At this stage, this equivalence is merely an abstract map.
But the vast majority of the group morphisms we met are inclusion-based,
so Budney manages to show
that this equivalence coincides with $\kappa$ via explicit models. Weak
equivalences are finally
promoted to strong ones via an application of Whitehead's theorem. 
\end{proof}

This result can be thought of as a generalization of Schubert's theorem:
the connected sum operation $\hash$ is extended to an algebraic structure
on the space level
of $\KKhat$, and the isomorphism $\pi_0\KKhat = \Com[\pi_0\PPhat]$
is extended to the $\cubes{2}$--equivariant homotopy equivalence $\KKhat
\simeq \cubes{2}[\PPhat]$.
Note however that Budney uses Theorem \ref{KnotStructureThm} in the very
first sentence of the proof,
so that this 
generalization is in no way an alternative argument for
Schubert's result. 

\subsection{Burke and Koytcheff's freeness theorem}

We now get to Burke and Koytcheff's result about
fat $2$--string links. Recall from Subsection \ref{EmbeddingSpacesLink}
that $\QQhat^0 \subset \LLhat^0$ denotes the fat $2$--string links which
are prime
but not in the image of one of the maps $\hat\varphi^s$, $s \in \{\up,
\down, \updown\}$.
The theorem we present here is \cite[Theorem~6.8]{KoytcheffInfect}.
We again provide a quick sketch of proof, as the ideas involved will
reappear in the proof of our main result, Theorem \ref{freeSCL}.

\begin{Thm}[Burke and Koytcheff~\cite{KoytcheffInfect}]
\label{KoytcheffFreeness}
Let $\SShat^0$ be the subspace of $\LLhat^0$ consisting of the fat $2$--string
links whose prime factors lie in $\QQhat^0$.
Then, the restriction of the structure map
\[
\lambda \co \cubes{1}[\QQhat^0] \rightarrow \SShat^0
\]
from Theorem \ref{ConActsOnECiota-on-Dtw-} is a homotopy equivalence.
\end{Thm}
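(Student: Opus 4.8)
The plan is to follow the template of the proof of Theorem \ref{BudneyFreeness}, taking advantage of the fact that the relevant $3$-manifold topology is more favourable for string links than for knots, because concatenation of string links is non-commutative and so no braiding of the prime factors occurs.

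First I would reduce the statement to one about individual path components. Applying $\pi_0$ to $\lambda:\cubes{1}\to\EE_{\SShat^0}$ recovers the concatenation monoid structure on $\pi_0\SShat^0$, and by Theorem \ref{LinkStructureThm} this monoid is free (non-commutative) on the basis $\pi_0\QQhat^0$. On the other hand, Proposition \ref{pi0FreenessCommute} together with $\pi_0\cubes{1}=\As$ (Example \ref{Ass}) identifies $\pi_0(\cubes{1}[\QQhat^0])$ with $\As[\pi_0\QQhat^0]$, the free monoid on $\pi_0\QQhat^0$. The map induced by $\lambda$ carries each generator to the corresponding generator, hence is the identity of this free monoid and in particular a bijection on components. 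So it remains to prove that $\lambda$ restricts to a weak homotopy equivalence over each component $\SShat^0_f$, which is then promoted to a genuine homotopy equivalence by Whitehead's theorem (all spaces at hand having the homotopy type of CW complexes).

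Next come the two base cases. For $f=\imath_2$ the source component is the one-point space $\cubes{1}(0)\times(\QQhat^0)^{\times 0}$, and one checks that $\SShat^0_{\imath_2}=\LLhat_{\imath_2}$ is weakly contractible: by Proposition \ref{BDiff} it is $B\Diffd(C_{\imath_2},\del C_{\imath_2})$, and $C_{\imath_2}$ is a genus-$2$ handlebody whose boundary-fixing diffeomorphism group is weakly contractible by the $3$-manifold input recalled in Section \ref{FreenessResults3DimensionalTopology}. For $f\in\QQhat^0$ prime, $\lambda$ restricts to $\cubes{1}(1)\times\QQhat^0_f\to\SShat^0_f=\QQhat^0_f$; since $\cubes{1}(1)$ deformation retracts onto the identity interval (on which $\lambda_1$ is the identity) this is an equivalence.

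The remaining, and main, case is $f=f_1\hash\cdots\hash f_n$ with each $f_i\in\QQhat^0$ and $n\geq 2$. Following Burke and Koytcheff, one analyses the $JSJ$-decomposition of $C_f$ (equivalently, a canonical decomposition along essential surfaces): the prime factors sit inside pairwise disjoint sub-balls stacked along the strand direction, and the surfaces cutting off the $C_{f_i}$ are closed genus-$2$ surfaces. These are essential and, crucially, neither tori nor fibres of a bundle structure, so by Theorem \ref{htpyTypeInc} the corresponding embedding-space components are weakly contractible. Moreover, since string-link concatenation is non-commutative and admits unique \emph{ordered} prime factorizations (Theorem \ref{LinkStructureThm}), these cutting surfaces are rigidly linearly ordered along the strand and no boundary-fixing diffeomorphism of $C_f$ permutes them; hence the symmetrization phenomenon of Budney's argument simply does not appear. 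A (disconnected-surface version of) Proposition \ref{cuttingProp}, applied exactly as in the proof of Theorem \ref{BudneyFreeness}, then yields
\[
\Diffd(C_f,\del C_f)\ \simeq\ \Diffd(R_n,\del R_n)\times\prod\nolimits_{i=1}^{n}\Diffd(C_{f_i},\del C_{f_i}),
\]
where $R_n$ is the \emph{root} piece, the complement of the $n$ boxes together with the two backbone strands. One checks that $\Diffd(R_n,\del R_n)$ is weakly contractible: the boxes can only be slid along a one-dimensional corridor without crossing each other, so the space of their motions is a contractible component of a configuration-type space (this is precisely where $\cubes{1}$, rather than a braided operad, enters, in contrast to the $\cubes{2}=BKB_n$ contribution in the knot case). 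Applying the classifying-space functor and Proposition \ref{BDiff} then gives
\[
\SShat^0_f\ =\ B\Diffd(C_f,\del C_f)\ \simeq\ \prod\nolimits_{i=1}^{n}B\Diffd(C_{f_i},\del C_{f_i})\ =\ \prod\nolimits_{i=1}^{n}\QQhat^0_{f_i}\ \simeq\ \cubes{1}[\QQhat^0]_f,
\]
the last identification using that $\cubes{1}(n)\times_{\Sigma_n}(-)$ contributes only contractible factors. As in Budney's proof one finally verifies that, through explicit inclusion-type models for all the groups and maps involved, this equivalence is the one induced by $\lambda$.

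The hard part is this composite case: checking that the genus-$2$ cutting surfaces are essential and that the relevant embedding-space components are stable under post-composition by $\Diff(C_f,\del C_f)$, that the root $R_n$ really has weakly contractible boundary-fixing diffeomorphism group, and, above all, that the abstract product decomposition of $\Diffd(C_f,\del C_f)$ agrees with the map coming from the operadic action $\lambda$ — this last point requires building compatible geometric models in the manner of \cite{Budney} and \cite{KoytcheffInfect}. The bookkeeping is lighter than in the knot case precisely because there is no braid-group contribution and no factor-permutation, but this is still where essentially all the content lies.
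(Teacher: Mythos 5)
Your overall strategy is the paper's: reduce to components via Theorem \ref{LinkStructureThm} and Proposition \ref{pi0FreenessCommute}, dispose of the unlink component using $C_{\imath_2}=H_2$ and Proposition \ref{BDiff}, retract $\cubes{1}(1)$ for prime components, split $\Diffd(C_f,\del C_f)$ via Proposition \ref{cuttingProp}, apply $B$, identify the result with $\lambda$, and finish with Whitehead. The one genuine divergence is the choice of cutting surfaces in the composite case. The paper, following Theorem 4.1 of \cite{KoytcheffPrime}, cuts along $n-1$ essential \emph{twice-punctured discs} (vertical copies of $P_2$ between consecutive prime factors), which are unique up to isotopy and split $C_f$ \emph{directly} into the $n$ pieces $C_{f_i}$ -- there is no root piece at all, so $\Diffd(C_f,\del C_f)\simeq\prod_i\Diffd(C_{f_i},\del C_{f_i})$ with no residual factor to analyse. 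You instead cut along closed genus-$2$ surfaces enclosing each prime factor, in the style of Budney's tori, which forces you to (a) verify that these closed surfaces are incompressible and not fibres so that Theorem \ref{htpyTypeInc} applies, and (b) introduce a root $R_n$ and prove $\Diffd(R_n,\del R_n)$ weakly contractible. Point (b) is exactly the step that produces the $KB_n$ factor in Budney's knot argument, so it is not free; your heuristic about boxes sliding in a one-dimensional corridor is plausible but is an assertion about a boundary-fixing diffeomorphism group, not about motions of the boxes, and would need an actual argument. The punctured-disc decomposition buys you all of this for nothing, since the uniqueness-up-to-isotopy and contractibility statements are already established in \cite{KoytcheffPrime} and Hatcher's theorem applies verbatim to bordered planar surfaces. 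So: same skeleton, but your key geometric input is different and leaves two nontrivial obligations open that the paper's choice of surfaces avoids.
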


\begin{proof}[Sketch of proof]
Thanks to Theorem \ref{LinkStructureThm}, we are assured that $\lambda$
induces a bijection on components.
We are therefore again left to show that it is an equivalence on each of
these components. 

On the component of the trivial link, $\lambda$ restricts to
the map $\cubes{1}(0) \times  (\QQhat^0)^{\times 0} \rightarrow
\LLhat^0_{\id_{\mathbb{R}} \times \iota}$.
The complement of $\id_{\mathbb{R}} \times \iota$ is a $2$--handlebody $H_2$,
and the diffeomorphism group
$\Diffd(H_2, \del H_2) \simeq \Diff(H_2, \del H_2)$ is contractible so we
can conclude as in the case of knots. 

Let now $f$ be a nontrivial element of $\SShat^0$ and $f =
f_1 \hash \cdots \hash f_n$
its decomposition in noncentral prime fat $2$--string links.
According to \cite[Theorem~4.1]{KoytcheffPrime}, there are $n-1$
twice-punctured disks separating $C_f$ into the complements of the
$f_i$. Moreover,
as shown in the fourth step of the proof of this same theorem, these disks
are unique up to isotopy.
This enables us to split $\Diffd(C_f, \del C_f)$ as the product $\prod_i
\Diffd(C_{f_i}, \del C_{f_i})$
thanks to Proposition \ref{cuttingProp}. Applying the classifying space
functor yields the equivalences
\[
		\SShat^0_f
=		B\Diffd(C_f, \del C_f)
\simeq	\prod_i B\Diffd(C_{f_i},\del C_{f_i})
\simeq	\prod_i \LLhat_{f_i}.
\]
When we denote by $\cubes{1}(n)_{(1, \dots, n)}$ the (contractible)
component of
$\cubes{1}(n)$ where the intervals appear from left to right in the order
$(1, \dots, n)$,
we can go further and write
\[
\SShat^0_f
\simeq	\prod_i \LLhat_{f_i}
\simeq	\cubes{1}(1)_{(1, \dots, n)} \times \prod_i \LLhat_{f_i}
= \cubes{1}[\QQhat^0]_f.
\]
At this stage, the equivalence is still given by an abstract
map but it is easy to keep track of the
identifications and see that it coincides with $\lambda$. Weak equivalences
are finally
promoted to strong ones via an application of Whitehead's theorem. 
\end{proof}

Again, this result is a generalization of Theorem \ref{LinkStructureThm}
as its provides a free algebraic structure
on the space level of $\SShat^0$, which descends to the usual free monoid
on the basis $\QQhat^0$
on isotopy classes. 

\subsection{Fat long knots and $2$--string links form a free $\SCL$--algebra}

We combine in this last paragraph the theorems of Budney, Burke and
Koytcheff presented above to prove
a freeness result for the whole space of fat $2$--string links. Namely,
we prove:

\begin{Thm}
\label{freeSCL}
The restriction of the structure map
\[
\mu\co \SCL[\QQhat^0, \PPhat, \PPhat, \PPhat] \rightarrow (\LLhat^0,
\KKhat, \KKhat, \KKhat)
\]
from Theorem \ref{SCLActs} is a homotopy equivalence.
\end{Thm}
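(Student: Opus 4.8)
The plan is to follow exactly the strategy used for Theorems \ref{BudneyFreeness} and \ref{KoytcheffFreeness}, upgraded to the colored setting. First I would reduce to components. Applying $\pi_0$ to $\mu$ and using Proposition \ref{pi0FreenessCommute}, the source becomes $\pi_0\SCL[\pi_0\QQhat^0, \pi_0\PPhat, \pi_0\PPhat, \pi_0\PPhat]$, which by the explicit description of $\pi_0\SCL$-algebras computed at the end of Section \ref{Operads} equals
\[
\left(\As[\pi_0\QQhat^0] \times \Com[\pi_0\PPhat]^{\times 3},\ \Com[\pi_0\PPhat],\ \Com[\pi_0\PPhat],\ \Com[\pi_0\PPhat]\right).
\]
On the other hand, by Corollary \ref{KnotStructureThm} the last three coordinates of the target have $\pi_0$ equal to $\Com[\pi_0\PPhat]$, and by Theorem \ref{LinkStructureThm} (applied to $\LLhat^0$, which has the same monoid of components as $\LL^0$) the first coordinate has $\pi_0$ isomorphic to $\pi_0\QQhat^0$'s free monoid times $\pi_0\KKhat^{\times 3}=\Com[\pi_0\PPhat]^{\times 3}$, the iso being induced by the inclusion and the maps $\varphi^s$. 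One checks that under these identifications the map induced by $\mu$ on $\pi_0$ is precisely this isomorphism --- this uses that $\mu_{(s;o)}(\id_{J^2})=\hat\varphi^s$ and that the restriction of $\mu$ to each monochromatic block is Budney's action, whose effect on $\pi_0$ is the monoid structure, as recorded after Theorem \ref{SCLActs}. So $\mu$ is a bijection on components in each color, and it remains to treat one component at a time.

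For the three ``central'' colors $s\in\{\up,\down,\updown\}$ there is nothing new: on color $s$ the operad $\SCL$ restricts to $\cubes{2}$ and $\mu$ restricts to Budney's (possibly reverse) action on $\KKhat$, so the component-wise equivalence is exactly Theorem \ref{BudneyFreeness}. The real content is the color $o$. Fix $f\in\LLhat^0$ in the component we are analyzing. Writing $f=f_1\hash\cdots\hash f_n$ using Theorem \ref{LinkStructureThm}, we may reorder the factors so that $f$ is the concatenation of (i) the split links coming from $\hat\varphi^\up$, carrying a long knot $k_\up$, (ii) the split links from $\hat\varphi^\down$, carrying $k_\down$, (iii) the double cables from $\hat\varphi^\updown$, carrying $k_\updown$, and (iv) the non-central prime factors in $\QQhat^0$. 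I would then identify the complement $C_f\subset B$: the three ``fattened $\varphi^s$'' pieces contribute the complements $C_{k_\up}$, $C_{k_\down}$, $C_{k_\updown}$ glued along appropriate essential annuli/tori to a root handlebody region, and the $\QQhat^0$-part is cut off by the twice-punctured discs of \cite{KoytcheffPrime} as in the proof of Theorem \ref{KoytcheffFreeness}. The point is that all of these cutting surfaces are essential, have weakly contractible (up to permutation of isotopic pieces) embedding components by Theorem \ref{htpyTypeInc}, and are stable under post-composition, so iterated application of Proposition \ref{cuttingProp} (together with Budney's $\Sigma_f$-twist when several $\QQhat^0$-factors coincide, as in the sketch of Theorem \ref{BudneyFreeness}) splits
\[
\Diffd(C_f,\del C_f)\ \simeq\ \KDiff(P_n,\del P_n)\times_{\Sigma_f}\Big(\prod_j \Diffd(C_{q_j},\del C_{q_j})\times \Diffd(C_{k_\up})\times \Diffd(C_{k_\down})\times \Diffd(C_{k_\updown})\Big).
\]
Applying $B(-)$ and using Proposition \ref{BDiff}, the classifying space of the $\KDiff$ factor is a little-$1$-cubes-type configuration space for the $\QQhat^0$-strands while the three knot factors assemble (via $BKB$-type identifications, and here the $\cubes{2}$ homotopy equivalences) into exactly $\SCL[\QQhat^0,\PPhat,\PPhat,\PPhat]_f$ on the color $o$. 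As in Budney's and Burke--Koytcheff's proofs, one then checks by tracking the inclusion-based models that this abstract equivalence is the map $\mu$, and promotes weak to strong equivalence by Whitehead's theorem.

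The main obstacle I anticipate is the second step: getting the $JSJ$-type picture of $C_f$ right and genuinely controlling the geometry at the interfaces between the ``central'' pieces and the root. Unlike in Theorem \ref{KoytcheffFreeness}, where all factors are non-central and are separated by twice-punctured discs, here the split links and double cables introduce essential annuli (between the two strands, or between a strand and the boundary) rather than discs, and the double-cable piece makes the two strands parallel, which changes the homeomorphism type of the root region. One must verify that these annuli are essential, that their embedding spaces are weakly contractible by the relevant case of Theorem \ref{htpyTypeInc} (the annulus and torus cases are the delicate ones), and that cutting along them is compatible with the previous cuts --- i.e.\ that the full collection of cutting surfaces can be realized simultaneously and essentially, so that Proposition \ref{cuttingProp} can be iterated. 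A secondary but real subtlety is bookkeeping the symmetric-group twists: the $\Sigma_f$-action only permutes isotopic non-central primes, and one must confirm it does not mix the three colors (which is clear since $\varphi^\up(k)$, $\varphi^\down(k')$, $\varphi^\updown(k'')$ are never isotopic for nontrivial knots, and the split/double-cable distinction is preserved by diffeomorphisms of $C_f$), so that the coloring of $\SCL[\QQhat^0,\PPhat,\PPhat,\PPhat]$ matches the decomposition of $\Diffd(C_f,\del C_f)$ on the nose.
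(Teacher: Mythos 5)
Your overall architecture is the paper's: reduce to components via Proposition \ref{pi0FreenessCommute} and the structure theorems, handle the monochromatic colors by Theorem \ref{BudneyFreeness}, and for a component of color $o$ cut the complement $C_f$ along essential surfaces, apply Proposition \ref{cuttingProp}, pass to classifying spaces via Proposition \ref{BDiff}, identify the outcome with $\SCL[\QQhat^0,\PPhat,\PPhat,\PPhat]_f$, check agreement with $\mu$ on $\pi_1$ (all spaces being $K(G,1)$s), and finish with Whitehead. Two organizational remarks. First, the paper cuts in two stages: three twice-punctured discs first separate $C_f$ into $C_{f^o}$ and the three central blocks $C_{\hat\varphi^s(f^s)}$ (Lemma \ref{Lem3Discs}), and only then is each central block cut along a single annulus $A_s$ into an $H_2$ and a copy of $C_{f^s}$; after that, Theorems \ref{BudneyFreeness} and \ref{KoytcheffFreeness} are invoked as black boxes on each block. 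Your single global decomposition down to all primes at once would force you to redo Budney's $\KDiff(P_n,\del P_n)\times_{\Sigma_f}(\cdots)$ analysis inside the link complement; the two-stage version avoids this entirely. Second, your bookkeeping of the symmetric twist is backwards: in the free-algebra component the quotient is by $\Sigma_{f^\up}\times\Sigma_{f^\down}\times\Sigma_{f^\updown}$, permuting isotopic \emph{central} prime knots within each color, while the $\QQhat^0$-factors appear in a fixed order $(1,\dots,|f|_o)$ because the monoid on $\pi_0\QQhat^0$ is free non-commutative; the non-central primes are never permuted.

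The genuine gap is the step you yourself flag as the main obstacle: the existence, essentiality, uniqueness up to boundary-fixing isotopy, and stability under post-composition of the annuli separating each central block from the corresponding knot complement. For $s\in\{\up,\down\}$ this is comparatively easy (cut along the horizontal disc separating the strands and use irreducibility of the complement), but for the double cable $s=\updown$ it is the real technical content of the theorem. The paper's Lemma \ref{Lem3Annuli} handles it by forming a torus $T_\updown$ from $A_\updown$ and a boundary annulus, showing that the region $V$ between $T_\updown$ and $\del C_{\hat\varphi^\updown(f)}$ is atoroidal (Alexander's theorem plus triviality of companion tori of the unknot), deducing that $T_\updown$ is the unique base-level torus of the JSJ decomposition and hence preserved up to isotopy by every boundary-fixing diffeomorphism, and finally distinguishing $A_\updown$ from the competing $\del$-parallel annulus with the same boundary. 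Without an argument of this kind, Proposition \ref{cuttingProp} cannot be applied to the double-cable block, so the proposal as written identifies the correct strategy but does not yet constitute a proof.
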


Just as in the 
preceding paragraphs, the proof mainly consists
in reducing ourselves to each connected
component and splitting the diffeomorphism group of the complement of a
link along suitable surfaces.
We state two technical lemmas to prepare this cutting process, then proceed
to the proof of the theorem.

\begin{Lem}
\label{LemthDiscs}
Let $f$ be a fat $2$--string link decomposing as
$$f^o \hash \hat\varphi^\up(f^\up) \hash \hat\varphi^\down(f^\down) \hash
\hat\varphi^\updown (f^\updown)$$
for some element $f^o$ in $\SShat^0$ and fat long knots $f^s$, $s \in \{\up,
\down, \updown\}$.
Then, the three vertical twice-punctured disks $D$ cutting $C_f$ into
$C_{f^o}$, $C_{\hat\varphi^\up(f^\up)}$, $C_{\hat\varphi^\up(f^\up)}$
and $C_{\hat\varphi^\up(f^\up)}$
are unique up to isotopy fixing the boundary. Moreover, the component
$\Emb(D, C_f, \del D)_D$ is
weakly contractible and stable under the postcomposition action
of $\Diff(C_f, \del C_f)$.
\end{Lem}

\begin{proof}
The uniqueness statement is proved in the steps $1$ and $3$ of Blair,
Burke and Koytcheff's proof of their Theorem~4.1 in \cite{KoytcheffPrime}. Applying a diffeomorphism of $\Diff(C,
\del C_f)$ to the punctured disks $D$ does not
change the fact that they split $C_f$ into pieces homeomorphic to the
complement of
$f^o$ and $\hat\varphi^s(f^s)$ for $s \in \{\up, \down, \updown\}$. Thus,
the component $\Emb(D, C_f, \del D)_D$ is stable under the postcomposition
action of this diffeomorphism
group. Its contractibility immediately follows from Hatcher's work on
incompressible surfaces
(Theorem \ref{htpyTypeInc}).
\end{proof}

\begin{figure}
\small
\def\svgwidth{\hsize}
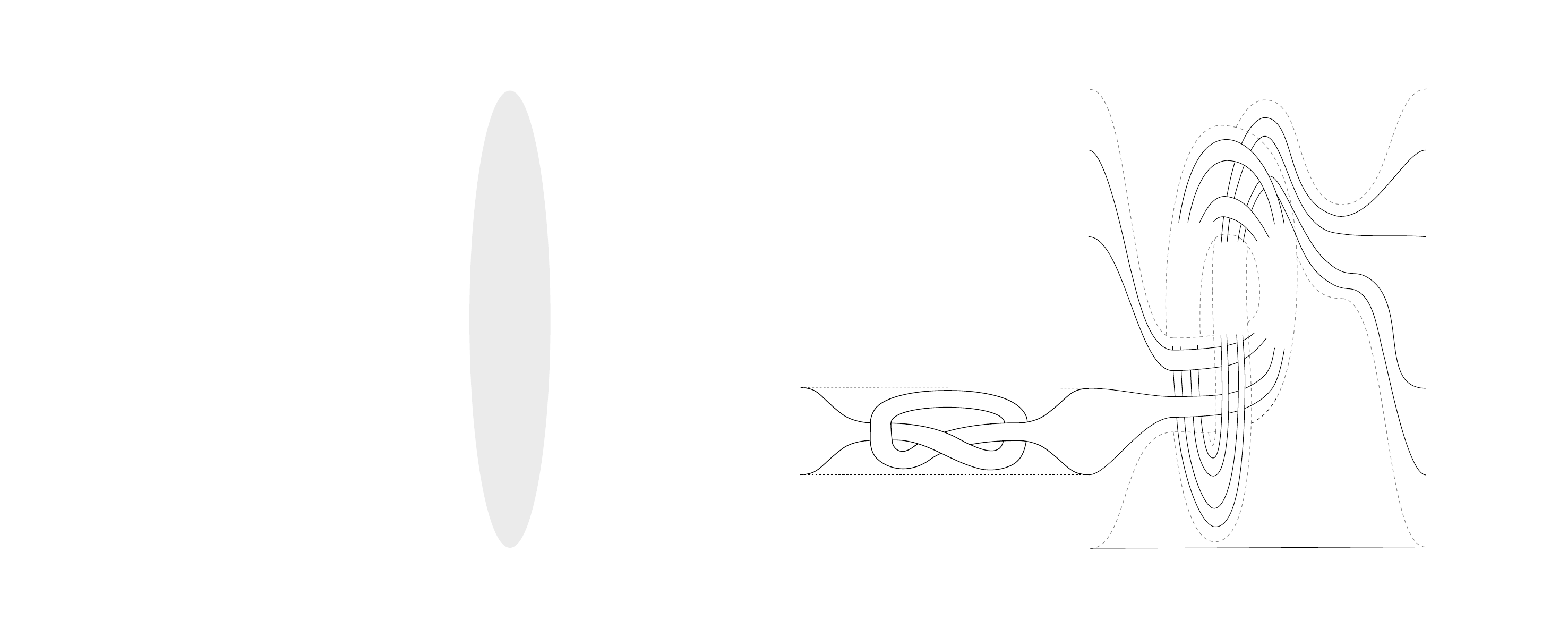
\caption{Illustration of the three disks $D=D_1 \amalg D_2 \amalg D_3$ in $C_f$.}
\end{figure}

\begin{Lem}
\label{LemthAnnuli}
Let $f$ be a fat long knot and $\hat\varphi^s(f)$ the central link obtained
from $f$ for 
$s \in \{\up, \down, \updown\}$. Consider the annulus $A_s$ in the complement
$C_{\hat\varphi^s(f)}$ specified by
\begin{enumerate}[leftmargin=*, label=(\roman*), font=\itshape]
\item $A_\up = (\id_{\mathbb{R}} \times \iota)((J \times \del D^2) \amalg
\emptyset)$,
\item $A_\down = (\id_{\mathbb{R}} \times \iota)(\emptyset \amalg (J
\times \del D^2))$,
\item $A_\updown = f(J \times \del D^2)$.
\end{enumerate}
Then the isotopy class of $A_s$ is stable under the postcomposition
action of
$$\Diff(C_{\hat\varphi^s(f)}, \del C_{\hat\varphi^s(f)})$$
and the component
$\Emb(A, C_{\hat\varphi^s(f)}, \del A)_{A_s}$ is weakly contractible.
\end{Lem}

\begin{figure} 
\small
\def\svgwidth{\hsize}
%% Creator: Inkscape inkscape 0.92.3, www.inkscape.org
%% PDF/EPS/PS + LaTeX output extension by Johan Engelen, 2010
%% Accompanies image file '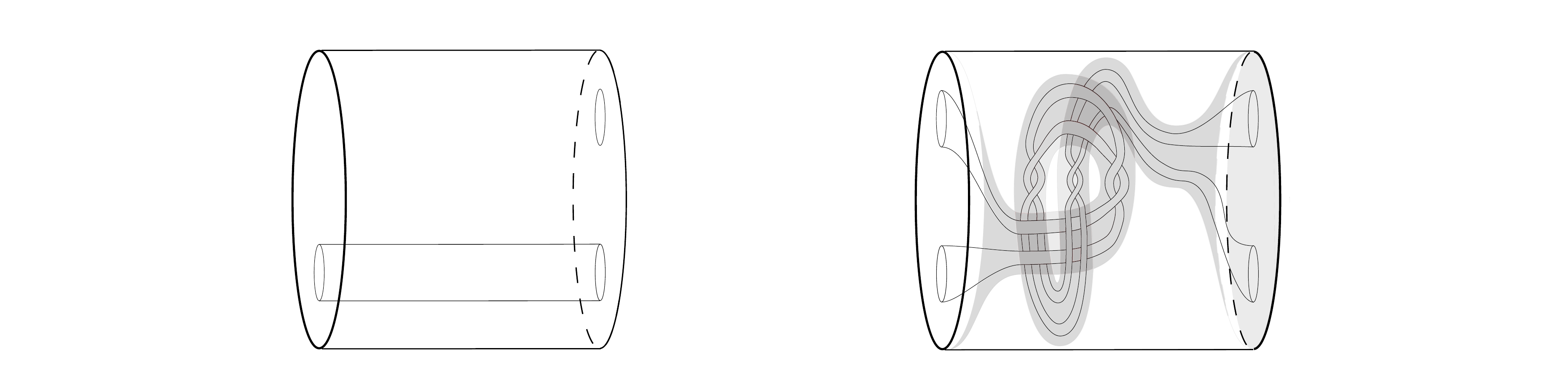' (pdf, eps, ps)
%%
%% To include the image in your LaTeX document, write
%%   \input{<filename>.pdf_tex}
%%  instead of
%%   \includegraphics{<filename>.pdf}
%% To scale the image, write
%%   \def\svgwidth{<desired width>}
%%   \input{<filename>.pdf_tex}
%%  instead of
%%   \includegraphics[width=<desired width>]{<filename>.pdf}
%%
%% Images with a different path to the parent latex file can
%% be accessed with the `import' package (which may need to be
%% installed) using
%%   \usepackage{import}
%% in the preamble, and then including the image with
%%   \import{<path to file>}{<filename>.pdf_tex}
%% Alternatively, one can specify
%%   \graphicspath{{<path to file>/}}
%% 
%% For more information, please see info/svg-inkscape on CTAN:
%%   http://tug.ctan.org/tex-archive/info/svg-inkscape
%%
\begingroup%
  \makeatletter%
  \providecommand\color[2][]{%
    \errmessage{(Inkscape) Color is used for the text in Inkscape, but the package 'color.sty' is not loaded}%
    \renewcommand\color[2][]{}%
  }%
  \providecommand\transparent[1]{%
    \errmessage{(Inkscape) Transparency is used (non-zero) for the text in Inkscape, but the package 'transparent.sty' is not loaded}%
    \renewcommand\transparent[1]{}%
  }%
  \providecommand\rotatebox[2]{#2}%
  \newcommand*\fsize{\dimexpr\f@size pt\relax}%
  \newcommand*\lineheight[1]{\fontsize{\fsize}{#1\fsize}\selectfont}%
  \ifx\svgwidth\undefined%
    \setlength{\unitlength}{1785.82677165bp}%
    \ifx\svgscale\undefined%
      \relax%
    \else%
      \setlength{\unitlength}{\unitlength * \real{\svgscale}}%
    \fi%
  \else%
    \setlength{\unitlength}{\svgwidth}%
  \fi%
  \global\let\svgwidth\undefined%
  \global\let\svgscale\undefined%
  \makeatother%
  \begin{picture}(1,0.24603175)%
    \lineheight{1}%
    \setlength\tabcolsep{0pt}%
    \put(0,0){\includegraphics[width=\unitlength,page=1]{AnnuliAs.pdf}}%
    \put(0.31731707,0.15569242){\color[rgb]{0,0,0}\makebox(0,0)[lt]{\lineheight{1.25}\smash{\begin{tabular}[t]{l}$A_\up$\end{tabular}}}}%
    \put(0,0){\includegraphics[width=\unitlength,page=2]{AnnuliAs.pdf}}%
    \put(0.78598313,0.10159772){\color[rgb]{0,0,0}\makebox(0,0)[lt]{\lineheight{1.25}\smash{\begin{tabular}[t]{l}$A_\updown$\end{tabular}}}}%
    \put(0,0){\includegraphics[width=\unitlength,page=3]{AnnuliAs.pdf}}%
  \end{picture}%
\endgroup%

\caption{Illustration of the annuli $A_\up$ and $A_\updown$.}
\end{figure}

\begin{proof}
We first treat the case where $s=\up$. Consider a horizontal disk
$E \subset \smash{C_{\hat\varphi^\up(f)}}$
separating the two strands of
$\hat\varphi^\up(f)$.
Cutting along $E$ yields two manifolds: an upper piece containing $A_\up$,
homeomorphic to $C_f$, and a lower one that is a $1$--handlebody~$H_1$.
Any two disks in $C_{\hat\varphi^\up(f)}$ sharing their boundary are
isotopic because
$C_{\hat\varphi^\up(f)}$ is irreducible. Therefore, given a diffeomorphism
$g$ in
$\Diff(C_{\hat\varphi^\up(f)}, \del C_{\hat\varphi^\up(f)})$, there is an
isotopy from $g(E)$ to $E$.
It can be extended to a boundary preserving ambient isotopy and
postcomposing $g$ by the latter shows that we may assume that $g(E)=E$.
In other words, $g$ preserves the cut and in particular $g(A_\up)$ lies
in the upper piece.
But $A_\up$ is boundary parallel after the cut, so it is unique up to
boundary-fixing isotopy in its piece,
which concludes the proof in this case.
The weak contractibility statement immediately follows from Theorem \ref{htpyTypeInc}.
The case where $s=\down$ is treated the exact same way. 

We now deal with the case where $s=\updown$.
The proof of the stability statement uses the JSJ--decomposition recalled
in Theorem \ref{JSJDecomp},
especially the uniqueness part. The idea is to show that the
JSJ--decomposition of $C_{\hat\varphi^\updown(f)}$
admits a single base-level torus $T_\updown$, which must be unique up to
isotopy, and that $A_\updown$
is a suitable piece of it. 

Let ${A_\updown}^\prime$ be the annulus
$J \times \del D^2 \subset \del C_{\hat\varphi^\updown(f)}$.
The two annuli $A_\updown$ and ${A_\updown}^\prime$ share their boundary
so that their union
is a torus in $C_{\hat\varphi^\updown(f)}$. Let $T_\updown$ be the torus
obtained by pushing
$A_\updown \cup {A_\updown}^\prime$ in the interior of
$C_{\hat\varphi^\updown(f)}$.
It is essential and cuts $C_{\hat\varphi^\updown(f)}$ into two manifolds,
one containing $\del C_{\hat\varphi^\updown(f)}$, 
$A_\updown$ and ${A_\updown}^\prime$
that we denote by $V$,
the other one homeomorphic to $C_f$. We now proceed to show that $T_\updown$
is the base-level
torus of the JSJ--decomposition of $C_{\hat\varphi^\updown(f)}$.

\begin{Claim}
Let $P_2$ be a twice-punctured disk and $\gamma_\updown$ a curve in the
interior of $P_2$ parallel to the
external boundary circle. Then $V$ is homeomorphic to a $2$--handlebody
$J \times P_2$ deprived of a
solid torus that is a tubular neighborhood of $0 \times \gamma_\updown$.
\end{Claim}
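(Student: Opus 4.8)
The plan is to build an explicit homeomorphism by unwinding the definition of $\hat\varphi^\updown$. Recall that $\hat\varphi^\updown(f) = f \circ (\id_{\bb{R}} \times \iota)$ sends the long knot $f$ to the string $2$-link whose two strands run parallel to one another and are both knotted according to $f$. Since the two strands of $\hat\varphi^\updown(f)$ sit inside a single tube that is a copy of the image of $f$, the complement $C_{\hat\varphi^\updown(f)} \subset B$ decomposes along the essential torus $T_\updown$ (the boundary of a regular neighbourhood of the outer tube) into two pieces: the ``inside'' piece, which is the complement of $f$ inside its own tube, i.e. $C_f$; and the ``outside'' piece $V$, which is what is left of $B$ once one removes an open tubular neighbourhood of the knotted tube containing both strands. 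So the first step is simply to record that $V$ is homeomorphic to $B$ minus an open regular neighbourhood of $f(J \times 0)$, \emph{thickened}: more precisely $V \cong C_{f} $ \emph{as a knot complement in $B$}, but with the extra data that inside $V$ there is still the pair of parallel strands, whose neighbourhoods must also be removed.

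The key observation I would make is that the homeomorphism type of $V$ does not see the knotting of $f$ at all: the pair $(B, f(J \times D^2))$ is homeomorphic to the pair $(B, J \times D^2)$ (the standard unknotted tube), because $f$ is an embedding of $J \times D^2$ into $B$ that is standard near the boundary, hence is ambient isotopic to the standard inclusion by the isotopy extension theorem. Therefore $V$, which is the closure of $B \setminus f(J \times D^2)$ together with $f(J \times D^2) \setminus (\text{nbhd of the two parallel strands})$, is homeomorphic to the closure of $B \setminus (J \times D^2)$ union $(J\times D^2)$ minus two standard parallel solid tori. Equivalently, $V \cong (J \times D^2) \setminus (\text{two open parallel solid subtori})$, which is exactly $J \times (D^2 \setminus \text{two open discs}) = J \times P_2$ with one further open solid torus removed --- that removed solid torus being a neighbourhood of $0 \times \gamma_\updown$ where $\gamma_\updown$ parallels the outer boundary circle of $P_2$ (it is the trace of $\del C_{\hat\varphi^\updown(f)}$'s outer wall once we pass to the product picture). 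This matches the statement of the Claim.

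Concretely, the steps in order: (i) use isotopy extension to replace $f$ by the standard tube, reducing to the case $f = \id$; (ii) in the standard picture, identify $C_{\hat\varphi^\updown(\id)}$ explicitly as $B$ minus two open parallel solid tori (the two strand-neighbourhoods) --- this is a $2$-handlebody-like object; (iii) locate $T_\updown$ as the boundary of a neighbourhood of the smaller unknotted tube enclosing both strands and check it separates off the advertised pieces; (iv) read off that the outer piece $V$ is $(J \times D^2)$ with the two parallel solid-torus neighbourhoods of the strands removed, rewrite $D^2$ minus two discs as $P_2$, and observe that the boundary torus $\del C_{\hat\varphi^\updown(\id)}$ contributes a solid-torus neighbourhood of $0 \times \gamma_\updown$ that must also be deleted; (v) transport the homeomorphism back through the ambient isotopy of step (i). One should also note that under this identification the annulus $A_\updown = f(J \times \del D^2)$ becomes $J \times \del_{ext} P_2$ and ${A_\updown}'$ becomes a parallel copy, so that $T_\updown = A_\updown \cup {A_\updown}'$ is precisely $J \times \gamma_\updown$ pushed slightly inward, which is consistent with the surrounding argument.

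The main obstacle I expect is bookkeeping rather than conceptual: one must be careful about which solid tori are being removed and which boundary components of $C_{\hat\varphi^\updown(f)}$ correspond to which pieces of $\del(J \times P_2)$ after the removal, since $C_{\hat\varphi^\updown(f)}$ has a $2$-torus boundary (four annular faces) and these get distributed between $V$ and the $C_f$-piece in a specific way; getting the $\gamma_\updown$-neighbourhood on the correct side, and verifying it is genuinely a tubular neighbourhood of $0 \times \gamma_\updown$ inside $J \times P_2$ rather than some other curve, is the delicate point. Irreducibility of $C_{\hat\varphi^\updown(f)}$ (guaranteed since it sits in $\bb{R}^3$) and uniqueness of the separating discs, both already invoked in the surrounding proof, can be used freely to smooth over any ambiguity in the isotopy classes of the cutting surfaces.
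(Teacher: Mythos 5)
There is a genuine gap, and it sits exactly at what you call your ``key observation''. The assertion that the pair $(B, f(J \times D^2))$ is homeomorphic to $(B, J \times D^2)$ ``by the isotopy extension theorem'' is false for a knotted $f$: isotopy extension upgrades a \emph{given} isotopy of embeddings to an ambient isotopy, it does not manufacture an isotopy between two embeddings that merely agree near the boundary. If the knotted tube were ambiently isotopic to the standard one, its complement $C_f$ would be a solid torus and every long knot would be trivial. So step (i) of your outline cannot be carried out, and everything downstream of it is unsupported.

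The claim is nevertheless true because $V$ does not contain the knot complement at all, and this is the second problem with the proposal: you describe $V$ as ``what is left of $B$ once one removes an open tubular neighbourhood of the knotted tube'', and later as the union of $\overline{B \setminus f(J \times D^2)}$ with $f(J \times D^2) \setminus (\text{strand neighbourhoods})$; but the first of these is precisely the piece homeomorphic to $C_f$, and the union of the two is all of $C_{\hat\varphi^\updown(f)}$, not $V$. In fact $T_\updown$ is a push-off of the torus $A_\updown \cup {A_\updown}^\prime = \del C_f$ into the interior of $C_f$, so it bounds a shrunk copy of $C_f$ on one side; $V$ is the \emph{other} side, namely the interior of the knotted tube with the two strand neighbourhoods deleted, together with the collar $T^2 \times I$ between $\del C_f$ and $T_\updown$. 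All of the knotting is discarded with the $C_f$ piece, which is why $V$ does not see it. The correct ``unknotting'' is therefore not an ambient isotopy but the remark that $f$ restricted to $J \times D^2$ is a homeomorphism onto its image, so the internal piece is abstractly the handlebody $J \times P_2$; regluing the collar $T^2 \times I$ along the annulus $A_\updown$ then reproduces $(J \times P_2) \setminus N(0 \times \gamma_\updown)$, because deleting a tubular neighbourhood of $0 \times \gamma_\updown$ from the outermost piece $J \times (\text{annulus}) \cong S^1 \times D^2$ of $J \times P_2$ yields exactly such a $T^2 \times I$ attached to the rest along a longitudinal annulus. This is the paper's argument (cut $V$ along $A_\updown$, identify the internal piece as a $2$-handlebody and the external one as a fattened torus); the ``extra solid torus'' you try to account for at the end of step (iv) is the shadow of this collar, not of the outer wall of $\del C_{\hat\varphi^\updown(f)}$.
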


\begin{proof}
This unknotting process is similar to Budney's ``untwisted reembedding''
described in the beginning of
his paper on knot complements \cite{BudneyJSJ}.
Cutting $V$ along $A_\updown$ results in two pieces: an external one
containing $T_\updown$
and an internal one that is a (knotted) $2$--handlebody $H_2$.
The torus $T_\updown$ is boundary parallel in the external part, so this
piece is a fattened torus $T^2 \times I$.
This shows that $V$ is the manifold obtained by gluing a $2$--handlebody
$H_2$ and a fattened torus
$T^2 \times I$ along specific annuli in their boundaries.
This description also matches our new model for $V$ so we are done. 
\end{proof}

This new model makes a lot of considerations easier since it
forgets all the complexity
of the knotting of $f$. We call the two unknotted annuli $J \times \del_{\inter}
P_2$ the \emph{strands} of $\del V$.
Through this identification, $A_\updown$ turns into the annulus
bounding a neighborhood of the two strands not containing $T_\updown$,
as illustrated in Figure \ref{AandV}. The second annulus
${A_\updown}^\prime$ remains in the 
boundary as $J \times \del D^2$.

\begin{figure}
\small
\def\svgwidth{\hsize}
%% Creator: Inkscape inkscape 0.92.3, www.inkscape.org
%% PDF/EPS/PS + LaTeX output extension by Johan Engelen, 2010
%% Accompanies image file '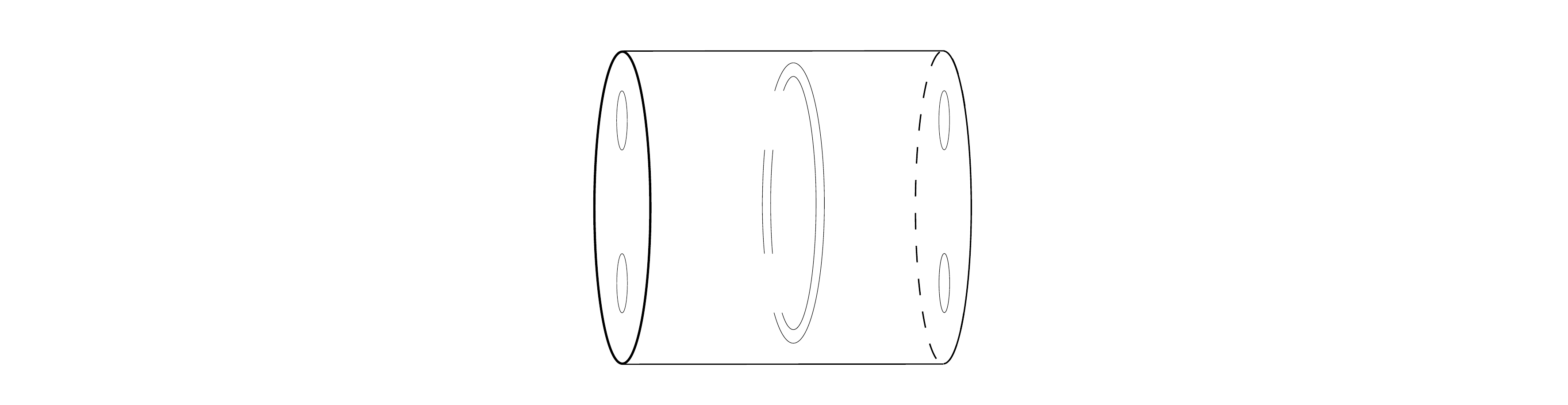' (pdf, eps, ps)
%%
%% To include the image in your LaTeX document, write
%%   \input{<filename>.pdf_tex}
%%  instead of
%%   \includegraphics{<filename>.pdf}
%% To scale the image, write
%%   \def\svgwidth{<desired width>}
%%   \input{<filename>.pdf_tex}
%%  instead of
%%   \includegraphics[width=<desired width>]{<filename>.pdf}
%%
%% Images with a different path to the parent latex file can
%% be accessed with the `import' package (which may need to be
%% installed) using
%%   \usepackage{import}
%% in the preamble, and then including the image with
%%   \import{<path to file>}{<filename>.pdf_tex}
%% Alternatively, one can specify
%%   \graphicspath{{<path to file>/}}
%% 
%% For more information, please see info/svg-inkscape on CTAN:
%%   http://tug.ctan.org/tex-archive/info/svg-inkscape
%%
\begingroup%
  \makeatletter%
  \providecommand\color[2][]{%
    \errmessage{(Inkscape) Color is used for the text in Inkscape, but the package 'color.sty' is not loaded}%
    \renewcommand\color[2][]{}%
  }%
  \providecommand\transparent[1]{%
    \errmessage{(Inkscape) Transparency is used (non-zero) for the text in Inkscape, but the package 'transparent.sty' is not loaded}%
    \renewcommand\transparent[1]{}%
  }%
  \providecommand\rotatebox[2]{#2}%
  \newcommand*\fsize{\dimexpr\f@size pt\relax}%
  \newcommand*\lineheight[1]{\fontsize{\fsize}{#1\fsize}\selectfont}%
  \ifx\svgwidth\undefined%
    \setlength{\unitlength}{1700.78740157bp}%
    \ifx\svgscale\undefined%
      \relax%
    \else%
      \setlength{\unitlength}{\unitlength * \real{\svgscale}}%
    \fi%
  \else%
    \setlength{\unitlength}{\svgwidth}%
  \fi%
  \global\let\svgwidth\undefined%
  \global\let\svgscale\undefined%
  \makeatother%
  \begin{picture}(1,0.25833333)%
    \lineheight{1}%
    \setlength\tabcolsep{0pt}%
    \put(0,0){\includegraphics[width=\unitlength,page=1]{AandV.pdf}}%
    \put(0.53588064,0.12214193){\color[rgb]{0,0,0}\makebox(0,0)[lt]{\lineheight{1.25}\smash{\begin{tabular}[t]{l}$A_\updown$\end{tabular}}}}%
    \put(0,0){\includegraphics[width=\unitlength,page=2]{AandV.pdf}}%
  \end{picture}%
\endgroup%

\caption{Illustration of the new model for $V$ and $A_\updown$.}
\label{AandV}
\end{figure}

\begin{Claim}
\label{VAtoroidal}
$V$ is atoroidal.
\end{Claim}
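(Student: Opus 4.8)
The plan is to use the new model for $V$ established in the previous claim: $V$ is a $2$-handlebody $J \times P_2$ with a tubular neighbourhood of a core circle $0 \times \gamma_\updown$ removed, where $\gamma_\updown$ is parallel to $\del_{ext}P_2$. So $V$ is a Seifert-fibered-like piece, and I want to show it contains no essential torus. First I would fix notation and recall that $J \times P_2$ retracts onto $P_2$, a planar surface, so $V$ is homotopy equivalent to $P_2$ minus a point, i.e.\ a thrice-punctured disc; in particular $V$ has free fundamental group of rank $3$ and $H_2(V) = 0$. This already rules out essential tori that are nonseparating (a nonseparating closed surface would contribute to $H_2$), but the main work is ruling out separating incompressible tori.

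The key steps, in order: (1) Observe that $V$ is a Seifert-fibered space — give it the product fibering $J \times P_2$ has (circles $\{pt\} \times \del$-parallel curves, or more precisely the fibering induced from viewing $J \times P_2$ as a circle bundle after rounding, but cleanest is to note $V$ deformation retracts to a planar surface cross $S^1$-neighbourhood picture) — actually the cleanest route is: $V$ is homeomorphic to the complement in a solid torus of a core-parallel circle, which is $P_2 \times S^1$ up to the boundary structure, hence Seifert-fibered over a planar base. (2) Invoke the classification of incompressible surfaces in Seifert-fibered spaces (e.g.\ Hatcher's notes on $3$-manifolds, or Jaco): any incompressible, $\del$-incompressible surface in a Seifert-fibered space is isotopic either to a vertical surface (a union of fibers) or a horizontal one. (3) Rule out the horizontal case: a horizontal torus would force $V$ to fiber over $S^1$ with torus fiber, but $V$ has a planar (hence non-torus, with boundary) base orbifold, contradiction — or more elementarily, the base surface of $V$ has negative Euler characteristic so there is no horizontal closed surface. (4) Rule out the vertical case: a vertical torus projects to an essential circle in the planar base orbifold; every essential simple closed curve in a planar surface is $\del$-parallel, so the vertical torus is $\del$-parallel in $V$, hence not essential. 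Conclude $V$ is atoroidal.

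The main obstacle I expect is getting the Seifert-fibered structure on $V$ stated precisely and correctly matched to the ``$2$-handlebody minus a core-parallel solid torus'' model — one has to be careful that the removed solid torus is fibered compatibly (it is, since $\gamma_\updown$ is parallel to a boundary circle of $P_2$, so its neighbourhood is a union of fibers in the product fibering of $J \times P_2 \cong P_2 \times S^1$ after the standard identification), and that the boundary annuli $A_\updown$, ${A_\updown}^\prime$ and $T_\updown$ are either vertical or correctly positioned. An alternative, perhaps lower-tech, route avoiding the full Seifert-fibered surface classification: observe directly that $V$ is homeomorphic to $P_3 \times S^1$ where $P_3$ is a planar surface (thrice-punctured sphere/disc), because removing a boundary-parallel core circle from $J \times P_2$ just adds one more puncture to the planar factor; then any incompressible torus in $F \times S^1$ with $F$ planar and $\chi(F) < 0$ is isotopic to $c \times S^1$ for an essential curve $c \subset F$, and such a $c$ is $\del$-parallel, so the torus is $\del$-parallel. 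I would write the proof using whichever of these is cleaner given the conventions already fixed in the paper, citing \cite{Hatcher3M} for the structure of incompressible surfaces in $F \times S^1$.
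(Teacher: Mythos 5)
Your proposal rests on the identification of $V$ with a Seifert--fibered space, specifically $P_3 \times S^1$, and this identification is false. In the paper's notation $J \times P_2 = H_2$ is the genus-$2$ \emph{handlebody} (an $I$-bundle over $P_2$, not an $S^1$-bundle), and $V$ is obtained from it by removing a tubular neighbourhood of the interior circle $0 \times \gamma_\updown$. Consequently $\del V$ consists of the torus $T_\updown$ \emph{and} the untouched genus-$2$ surface $\del(J \times P_2)$. A compact manifold with a genus-$2$ boundary component cannot be Seifert--fibered (its boundary would have to be foliated by circles, hence a union of tori), and it is certainly not of the form $F \times S^1$, whose boundary is a union of tori. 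Indeed the paper explicitly uses the \emph{non}-Seifert-fiberedness of $V$ a few lines later to argue that $T_\updown$ belongs to the $JSJ$-decomposition, so your premise contradicts a fact the argument depends on. The slip appears to come from reading $J \times P_2$ as $S^1 \times P_2$: removing a neighbourhood of $\{0\}\times\gamma$ from $I \times F$ does not ``add a puncture to the planar factor''; it creates a new torus boundary component while leaving the old boundary intact. (Relatedly, $\pi_1(V) \cong \bb{Z}^2 \ast_{\bb{Z}} F_2$ is not free, since $V$ is a fattened torus glued to a handlebody along an annulus, so the opening homotopy-type remarks are also off.) With the Seifert structure gone, the vertical/horizontal classification of incompressible tori is unavailable, and the proof does not go through.

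For comparison, the paper's route is different in kind: it embeds the model of $V$ in $\bb{R}^3 \subset S^3$, invokes Alexander's theorem that every torus in $S^3$ bounds a solid torus on one side, observes that an essential torus $T \subset V$ must have that solid torus contain one of $T_\updown$ or $\del(J\times P_2)$ and meet every meridional disc, and then uses that $T_\updown$ and the two strands are unknotted together with Cromwell's Theorem 4.2.2 (a companion torus of the unknot is trivial) to force $T$ to be boundary-parallel or compressible. If you want to salvage a low-tech argument, that is the structure to aim for; any correct proof must engage with the genus-$2$ boundary component rather than fiber it.
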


\begin{proof}
In many cases including that of $V$, the definition of an atoroidal
$3$--manifold can be reformulated algebraically
using the fundamental group. More precisely, one defines a peripheral
subgroup of a $3$--manifold $M$ to be
a subgroup of $\pi_1M$ that lies in the image of the inclusion of a
boundary component,
then declares $M$ to be atoroidal if every subgroup of $\pi_1M$ isomorphic
to $\mathbb{Z}^{\times 2}$
is conjugate to a peripheral subgroup.
The equivalence between the two definitions is alluded for example in the
beginning of \cite{AFW}, but
the implication we are about to use follows from Corollary~5.5 in
Waldhausen's article \cite{Waldhausen}. 

\begin{figure}
\small
\def\svgwidth{\hsize}
%% Creator: Inkscape 1.1 (c68e22c387, 2021-05-23), www.inkscape.org
%% PDF/EPS/PS + LaTeX output extension by Johan Engelen, 2010
%% Accompanies image file '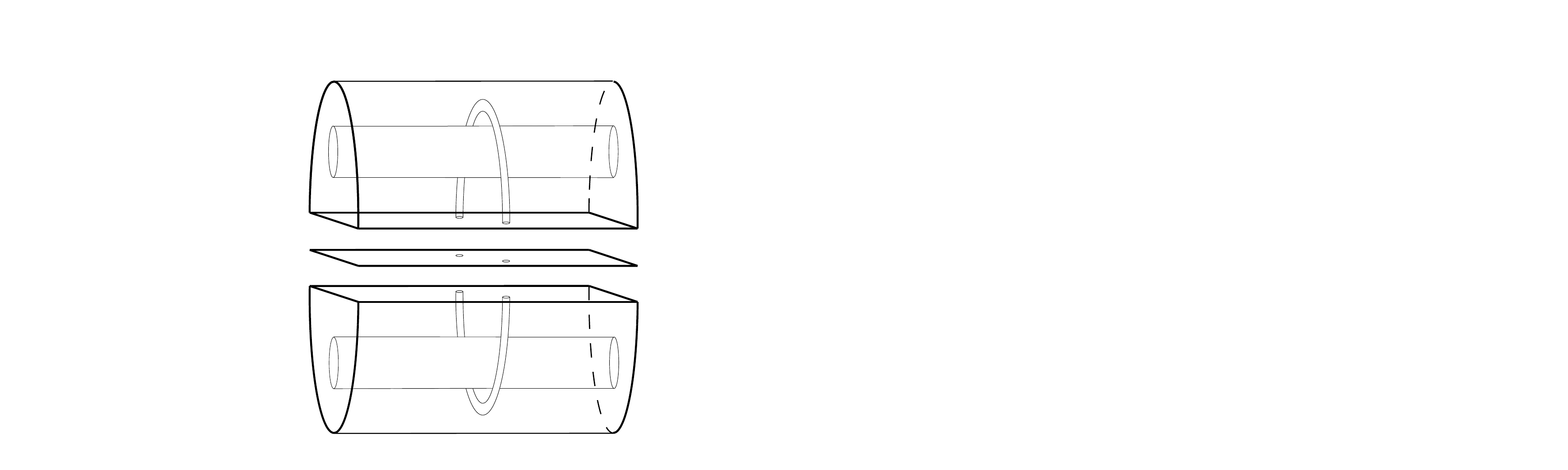' (pdf, eps, ps)
%%
%% To include the image in your LaTeX document, write
%%   \input{<filename>.pdf_tex}
%%  instead of
%%   \includegraphics{<filename>.pdf}
%% To scale the image, write
%%   \def\svgwidth{<desired width>}
%%   \input{<filename>.pdf_tex}
%%  instead of
%%   \includegraphics[width=<desired width>]{<filename>.pdf}
%%
%% Images with a different path to the parent latex file can
%% be accessed with the `import' package (which may need to be
%% installed) using
%%   \usepackage{import}
%% in the preamble, and then including the image with
%%   \import{<path to file>}{<filename>.pdf_tex}
%% Alternatively, one can specify
%%   \graphicspath{{<path to file>/}}
%% 
%% For more information, please see info/svg-inkscape on CTAN:
%%   http://tug.ctan.org/tex-archive/info/svg-inkscape
%%
\begingroup%
  \makeatletter%
  \providecommand\color[2][]{%
    \errmessage{(Inkscape) Color is used for the text in Inkscape, but the package 'color.sty' is not loaded}%
    \renewcommand\color[2][]{}%
  }%
  \providecommand\transparent[1]{%
    \errmessage{(Inkscape) Transparency is used (non-zero) for the text in Inkscape, but the package 'transparent.sty' is not loaded}%
    \renewcommand\transparent[1]{}%
  }%
  \providecommand\rotatebox[2]{#2}%
  \newcommand*\fsize{\dimexpr\f@size pt\relax}%
  \newcommand*\lineheight[1]{\fontsize{\fsize}{#1\fsize}\selectfont}%
  \ifx\svgwidth\undefined%
    \setlength{\unitlength}{1700.78740157bp}%
    \ifx\svgscale\undefined%
      \relax%
    \else%
      \setlength{\unitlength}{\unitlength * \real{\svgscale}}%
    \fi%
  \else%
    \setlength{\unitlength}{\svgwidth}%
  \fi%
  \global\let\svgwidth\undefined%
  \global\let\svgscale\undefined%
  \makeatother%
  \begin{picture}(1,0.3)%
    \lineheight{1}%
    \setlength\tabcolsep{0pt}%
    \put(0,0){\includegraphics[width=\unitlength,page=1]{VanKampenV.pdf}}%
    \put(0.11856063,0.19488977){\color[rgb]{0,0,0}\makebox(0,0)[lt]{\lineheight{1.25}\smash{\begin{tabular}[t]{l}$H_2 \cong$\\\end{tabular}}}}%
    \put(0.12120646,0.13316276){\color[rgb]{0,0,0}\makebox(0,0)[lt]{\lineheight{1.25}\smash{\begin{tabular}[t]{l}$P_2 \cong$\\\end{tabular}}}}%
    \put(0.11856063,0.06319954){\color[rgb]{0,0,0}\makebox(0,0)[lt]{\lineheight{1.25}\smash{\begin{tabular}[t]{l}$H_2 \cong$\\\end{tabular}}}}%
    \put(0,0){\includegraphics[width=\unitlength,page=2]{VanKampenV.pdf}}%
    \put(0.7294274,0.17799763){\color[rgb]{0,0,0}\makebox(0,0)[lt]{\lineheight{1.25}\smash{\begin{tabular}[t]{l}$\alpha_1$\\\end{tabular}}}}%
    \put(0.72940281,0.0863606){\color[rgb]{0,0,0}\makebox(0,0)[lt]{\lineheight{1.25}\smash{\begin{tabular}[t]{l}$\alpha_2$\\\end{tabular}}}}%
    \put(0.79314156,0.12850423){\color[rgb]{0,0,0}\makebox(0,0)[lt]{\lineheight{1.25}\smash{\begin{tabular}[t]{l}$\beta$\\\end{tabular}}}}%
    \put(0,0){\includegraphics[width=\unitlength,page=3]{VanKampenV.pdf}}%
  \end{picture}%
\endgroup%

\caption{Applying van Kampen's theorem to compute $\pi_1V$.}
\label{VanKampenV}
\end{figure}

We start by computing $\pi_1V$. The application of van Kampen's
theorem summarized in Figure \ref{VanKampenV} 
gives
\[
\pi_1V=\langle \alpha_1, \alpha_2, \beta \mid
\alpha_1\beta\alpha_1^{-1}=\alpha_2\beta\alpha_2^{-1}\rangle.
\]
We can further write 
\[ 
\pi_1V\cong\langle a, b, c \mid  ab=ba \rangle = \mathbb{Z}^{\times 2}
\ast \mathbb{Z}
\]
via $\alpha_1 \mapsto c$, $\alpha_2 \mapsto ca^{-1}$ and $\beta \mapsto b$. 

The inclusion of the toric boundary component $T_\updown \subset
V$ has image in $\pi_1$
the subgroup generated by $a$ and $b$. It corresponds to the
$\mathbb{Z}^{\times 2}$ factor
under the isomorphism $\pi_1V \cong \nobreak \mathbb{Z}^{\times 2} \ast \mathbb{Z}$.
We prove that any subgroup of $\mathbb{Z}^{\times 2} \ast \mathbb{Z}$
isomorphic to
$\mathbb{Z}^{\times 2}$ is conjugate to a subgroup of the latter.
Consider an arbitrary injection $\mathbb{Z}^{\times 2} \hookrightarrow
\mathbb{Z}^{\times 2} \ast \mathbb{Z}$
and denote by $x$ and $y$ the images of $(0, 1)$ and $(1, 0)$.
These $x$ and $y$ commute, they are nontrivial and they are not allowed
to be powers of some third element.
Now, applying Theorem~4.5 on page~209 of the book \cite{CGT} on groups
and presentations yields
the following three possibilities for $x$ and $y$:
\begin{enumerate}[leftmargin=*, label=(\roman*), font=\itshape]
\item $x$ or $y$ may be trivial;
\item if neither $x$ nor $y$ is trivial, but $x$ is in the conjugate of
a factor, then $y$ is in that same conjugate of a factor;
\item if neither $x$ nor $y$ is in a conjugate of a factor, then they are
powers of a third element of
$\mathbb{Z}^{\times 2} \ast \mathbb{Z}$.
\end{enumerate}
\unskip
Thanks to our observations, \textit{(i)} and \textit{(iii)} are ruled out and $x$ and $y$
must lie in the same conjugate of a factor of
$\mathbb{Z}^{\times 2} \ast \mathbb{Z}$.
The $\mathbb{Z}$ factor does not admit any subgroup 
isomorphic to
$\mathbb{Z}^{\times 2}$, so we are done.
\end{proof}

At this point, we showed that $T_\updown$ and the tori of the
JSJ--decomposition of $C_{f}$ cut
$C_{\hat\varphi^\up(f)}$ into atoroidal and Seifert-fibered pieces.
Finally, $T_\updown$ cannot be removed to obtain a smaller decomposition
because $V$ can never be part of a
Seifert-fibered manifold, having a boundary component homeomorphic to
a $2$--torus.
This shows that $T_\updown$ and the tori of the JSJ--decomposition of $C_f$
form a minimal decomposition
of $C_{\hat\varphi^\up(f)}$ into atoroidal and Seifert-fibered manifolds,
which proves that this collection is the JSJ--decomposition of
$C_{\hat\varphi^\up(f)}$.
The root $V$ is bounded by $\del C_{\hat\varphi^\updown(f)}$ and $T_\updown$
so $T_\updown$ is the only base-level torus. Its image is therefore unique
up to isotopy. 

Consider now $g \in \Diff(C_{\hat\varphi^\updown(f)}, \del
C_{\hat\varphi^\updown(f)})$.
Thanks to the work above, there is an isotopy from $g(T_\updown)$ to
$T_\updown$.
It can be extended to a boundary-fixing ambient isotopy, and postcomposing
$g$ by the latter shows that
we may assume that $g(T_\updown)=T_\updown$. In other words, $g$ preserves
the cut along $T_\updown$
and $g(A_\updown)$ lies in $V$.
Let us now take a look at the (boundary-fixing) isotopy classes of annuli
in $V$ with boundary $\del A_\updown$.
We prove that there are only two of them: the one of $A_\updown$ and the
one of ${A_\updown}^\prime$.
Let $A \subset V$ be an arbitrary annulus with adequate boundary.
We may assume that the interiors of $A$ and ${A_{\updown}}'$ are disjoint
because ${A_{\updown}}'$ is $\del$--parallel. 
%$V$ is irreducible (one can remove intersection circles, innermost first, as described in Hatcher's notes \cite{Hatcher3M}).
Now, $A \cup {A_{\updown}}'$ is an embedded torus in $V$.
Its image in $\pi_1$ is generated by $\alpha_2^{-1}\alpha_1 = a$ and some
other element $y$ that
commutes with $a$. Using \cite[Theorem~4.5]{CGT} again, we see that $y$
is either trivial or in the
$\mathbb{Z}^{\times 2}$ factor of $\mathbb{Z}^{\times 2} \ast \mathbb{Z}
\cong \pi_1V$. If $y$ is trivial or a power of $a$,
then $A \cup {A_\updown}'$ bounds a solid torus and the two annuli are isotopic.
If $y$ is not just a power of $a$, then $A \cup {A_{\updown}}'$ is
incompressible and must parallel to
$T_{\updown}$ by Claim \ref{VAtoroidal}. In this situation, $A$ is isotopic
to $A_\updown$.
The two annuli cannot be permuted by a diffeomorphism of $V$ because
${A_\updown}^\prime$ is $\del$--parallel
and $A_\updown$ is not. This shows that $g|_{V}(A_\updown)$ is isotopic
to $A_\updown$ via an
isotopy that fixes the boundary in $V$. This concludes this second case.
The weak contractibility of the component in the embedding space again
follows from Theorem \ref{htpyTypeInc}.
\end{proof}

We now implement all the tools at our disposal to complete the proof of Theorem \ref{freeSCL}.

\begin{proof}[Proof of Theorem \ref{freeSCL}]
Thanks to Theorems~\ref{KnotStructureThm} and~\ref{LinkStructureThm} and
\ref{pizeFreenessCommute},
we are assured that $\mu$ induces a bijection on components. Thus, we are
left to prove that it is a
homotopy equivalence on each of these components. 

On the component of the unlink, $\mu$ restricts to the map
$$\SCL(\emptyset; o) \times (\QQhat^0, \PPhat, \PPhat, \PPhat)^{\times
\emptyset} \rightarrow
\LLhat_{\id_{\mathbb{R}} \times \iota}.$$
The space $\SCL(\emptyset; o)$
consists of a single point.
The complement of the unlink is a $2$--handlebody $H_2$, and the
diffeomorphism group
$\Diffd(H_2, \del H_2) \simeq \Diff(H_2, \del H_2)$ is contractible.
Proposition \ref{BDiff} 
then gives the contractibility of
$\LLhat_{\id_{\mathbb{R}} \times \iota}=B\! \Diffd(H_2, \del H_2)$ so we have
an equivalence in this case. 

On the component of a fat long knot or an element of $\SShat^0$,
Theorems~\ref{BudneyFreeness} and~\ref{KoytcheffFreeness} 
imply the result because the action of $\SCL$ restricts to $\kappa$
and $\lambda$ on these components. 

Now let $f$ be a nontrivial fat $2$--string link. The action
map $\mu$ restricts on the component of $f$
to the map $\SCL[\QQhat^0, \PPhat, \PPhat, \PPhat]_f \rightarrow \LLhat_f$.
Suppose $f$ decomposes as the concatenation
$f^o \hash \hat\varphi^\up(f^\up) \hash \hat\varphi^\down(f^\down) \hash
\hat\varphi^\updown(f^\updown)$
for some element $f^o$ of $\SShat^0$ and some fat long knots $f^s$ for $s
\in \{\up, \down, \updown\}$.
We denote by $f^o= \hash_{i \leq |f|_o} f^o_i$ and
$f^s= \hash_{i \leq |f|_s} f^s_i$ the prime decompositions of $f^o$
and $f^s$.
Then, one readily checks with the usual model presented in Subsection \ref{OperadsFreeAlg} that the component
$\SCL[\QQhat^0, \PPhat, \PPhat, \PPhat]_f$ in the free algebra is given by
\[
\SCL(o^{|f|_o}, \up^{|f|_\up}, \down^{|f|_\down},
\updown^{|f|_\updown}; o)_{(1, \dots, |f|_o)}
\times_{(\Sigma_{f^\up} \times \Sigma_{f^\down} \times \Sigma_{f^\updown})}\!
\prod_{i \leq |f|_o} \!\! \LLhat_{f^o_i} \times \prod_s
\prod_{i \leq |f|_s} \!\! \KKhat_{f^s_i},
\]
where $\SCL(o^{|f|_o}, \up^{|f|_\up}, \down^{|f|_\down},
\updown^{|f|_\updown}; o)_{(1, \dots, |f|_o)}$
is the component of 
$$\SCL(o^{|f|_o}, \up^{|f|_\up}, \down^{|f|_\down},
\updown^{|f|_\updown}; o)$$
where the cubes indexed by $o$ appear from left to right in the order $(1,
\dots, |f|_o)$
and where $\Sigma_{f^s}$ is the subgroup of $\Sigma_{|f|_s}$ preserving
the partition specified by $i \sim j$
if and only if $f^s_i$ is isotopic to \smash{$f^s_j$}. This group acts on the
cubes indexed by $s$ in
$\SCL(o^{|f|_o}, \up^{|f|_\up}, \down^{|f|_\down}, \updown^{|f|_\updown};
o)_{(1, \dots, |f|_o)}$
and permutes the entries in $\prod_{i \leq |f|_s} \KKhat_{f^s_i}$.
The inclusion 
$$\SCL(o^{|f|_o}, \up^{|f|_\up},
\down^{|f|_\down},\updown^{|f|_\updown};o)_{(1, \dots, |f|_o)}
\cof \cubeso{2}(|f|_o)_{(1, \dots, |f|_o)} \times \prod_s \cubes{2}(|f|_s)$$
is a homotopy equivalence,
so that we have, by rearranging terms in the product, the natural
equivalences
\begin{align*}
\SCL[\QQhat^0, \PPhat, \PPhat, \PPhat]_f
& \simeq \biggl[ \cubeso{2}(|f|_o)_{(1, \dots, |f|_o)} \times
\prod_i \LLhat_{f^o_i} \biggr] \times
\prod_s \biggl[ \cubes{2}(|f|_s) \times_{\Sigma_{f^s}}
\prod_i \KKhat_{f^s_i} \biggr] 
\cr
& \simeq \cubes{1}[\QQhat^0]_{f^o} \times \prod_s \cubes{2}[\PPhat]_{f^s}.
\end{align*}
On the other hand, thanks to Proposition \ref{BDiff}, we know
that $\LLhat_f$ is a model for the classifying space of
$\Diffd(C_f, \del C_f)$. The three vertical twice-punctured disks $D$
from Lemma \ref{LemthDiscs}
splitting $C_f$ as $C_{f^o}$, $C_{\hat\varphi^\up(f^\up)}$,
$C_{\hat\varphi^\down(f^\down)}$ and
$C_{\hat\varphi^\updown(f^\updown)}$ are stable under the action of
$\Diff(C_f, \del C_f)$ and have their component in $\Emb(D, C_f, \del D)$
weakly contractible. Therefore, Proposition \ref{cuttingProp} gives the
inclusion-based weak equivalence
\[
\Diffd(C_{f^o}, \del C_{f^o}) \times \prod_s
\Diffd(C_{\hat\varphi^s(f^s)}, \del C_{\hat\varphi^s(f^s)})
\cof \Diffd(C_f, \del C_f).
\]
Now, Lemma \ref{LemthAnnuli} states that the three annuli $A_s$
also satisfy the conditions of Proposition \ref{cuttingProp}. They each split $C_{\hat\varphi^s(f^s)}$ into a
$2$--handlebody $H_2$ and a manifold
diffeomorphic to $C_{f^s}$. 
Actually, this second piece is precisely the
image of $C_{f^s}$ 
under $\id_{\mathbb{R}} \times \iota$
when $s \in \{\up, \down\}$ and $C_{f^\updown}$ itself when $s = \updown$.
The diffeomorphism group $\Diffd(H_2, \del H_2)$ is contractible so we
get the further natural equivalences
\[
\Diffd(C_{f^s}, \del C_{f^s}) \cof
\Diffd(C_{f^s}, \del C_{f^s}) \times \Diffd(H_2, \del H_2) \cof
\Diffd(C_{\hat\varphi^s(f^s)}, \del C_{\hat\varphi^s(f^s)}).
\]
Composing these results with Proposition \ref{BDiff} yields the
natural equivalences
\begin{align*}
\LLhat_{f^o} \times \prod_s \KKhat_{f^s} &\simeq
B\Diffd(C_{f^o}, \del C_{f^o}) \times \prod_s B\Diffd(C_{f^s},
\del C_{f^s}) \\
&\simeq		B\Diffd(C_f, \del C_f) \\
&\simeq \LLhat_f.
\end{align*}
We are now able to use Budney, Burke and Koytcheff's freeness results
(Theorems \ref{BudneyFreeness} and \ref{KoytcheffFreeness}) and our previous
discussion to get the equivalence
\[
\SCL[\QQhat^0, \PPhat, \PPhat, \PPhat]_f
\simeq	\cubes{1}[\QQhat^0]_{f^o} \times \prod_s
\cubes{2}[\PPhat]_{f^s}
\simeq	\LLhat_{f^o} \times \prod_s \KKhat_{f^s}
\simeq	\LLhat_f.
\]
We merely have an abstract equivalence at this stage. To show that it
coincides with $\mu$,
we need to check the commutativity up to homotopy of the diagram
\[
\begin{tikzcd}
B\Diffd(C_{f^o}, \del C_{f^o}) \times \displaystyle\prod_s
B\Diffd(C_{f^s}, \del C_{f^s}) \arrow[r, "\simeq"]\arrow[d, "\simeq"]
&B\Diffd(C_f, \del C_f) \arrow[d, "\simeq"]\\
\LLhat_{f^o} \times \displaystyle\prod_s \KKhat_{f^s} \arrow[r,
"\mu"]
&\LLhat_{f}
\end{tikzcd}
\]
but the spaces at stake are $K(G, 1)$'s by Proposition \ref{BDiff}, so it
is enough to
check the commutativity in $\pi_1$. This verification is very similar to
the end of Budney's proof of 
Theorem~11 in \cite{Budney}.
An element of $\pi_1 \LLhat_f$ is (a homotopy class of) a based path in
$\LLhat_f$, ie an isotopy from $f$ to $f$.
The elements of $\pi_1 B\Diffd (C_f, \del C_f)$ can canonically be identified
with $\pi_0 \Diffd(C_f, \del C_f)$
in the long exact sequence of the fibration realizing $\LLhat_f$ as
$B\Diffd(C_f, \del C_f)$ in Proposition \ref{BDiff}.
In this framework, picking a class $\phi \in \pi_0\Diffd(C_{f^o}, \del
C_{f^o})$
and chasing the diagram along the clockwise route turns it into an element
of $\pi_0 \Diffd(C_f, \del C_f)$ with its support
lying between $-1 \times D^2$ and $D_1 \subset C_f$, then converts it into
an isotopy of $f$
according to the construction in Proposition \ref{BDiff}.
Chasing $\phi$ along the counterclockwise route converts it into an
isotopy of $f^o$ in $\pi_1\LLhat_{f^o}$,
then applies $\mu$ to it. The outcome is the same as each
$\hat\varphi^s(f^s)$ is fixed all along this last isotopy.
The same argument shows that picking a class in $\pi_0
\Diffd(C_{f^s}, \del C_{f^s})$
and chasing the diagram in either direction has the same effect.
When evaluated in $\pi_1$, the upper-left product is a direct product,
on which a factor-by-factor verification is thus sufficient to get the
commutativity.

Finally, Hatcher and McCullough proved in \cite{FiniteCWDiff} that the
classifying spaces of the
diffeomorphism groups at stake here have the homotopy types of (aspherical
finite) CW--complexes
(what we use here could also be deduced from Palais' earlier article~\cite{PalaisHtpy}).
Thus, Whitehead's theorem promotes the weak homotopy equivalence $\mu$
to a strong one.
\end{proof}

\bibliographystyle{plain}
\bibliography{references}

\end{document}